\newtheorem{teo}{Theorem}[section]
\newtheorem{lem}[teo]{Lemma}
\newtheorem{defi}[teo]{Definition}
\newtheorem{cor}[teo]{Corollary}
\newtheorem{pro}[teo]{Proposition}
\newtheorem{obs}[teo]{Remark}
\DeclareMathOperator{\supp}{supp}
\DeclareMathOperator{\dist}{dist}
\DeclareMathOperator{\Hess}{Hess}
\DeclareMathOperator{\spann}{span}
\newcommand{\zetaastv}{(\zeta_\varepsilon \ast_v F)}
\title{Mollifier smoothing of $C^0$-Finsler structures}
\author{Ryuichi Fukuoka}
\address{Department of Mathematics, State University of Ma\-rin\-g\'a,
87020-900, Ma\-rin\-g\'a, PR, Brazil \\ email: rfukuoka@uem.br}
\author{Anderson Macedo Setti}
\address{Department of Mathematics, Federal University of Rondon\'opolis,
78736-900, Rondon\'opolis, MT, Brazil \\ email: andersonsetti@ufmt.br}
\DeclareMathOperator{\Hessian}{Hess}
\subjclass[2010]{53A55, 53B40}
\keywords{$C^0$-Finsler structures, Finsler structures, mollifier smoothing, connection, flag curvature}
\date{}
\begin{document}

\begin{abstract}
A $C^0$-Finsler structure is a continuous function $F:TM \rightarrow [0,\infty)$ defined on the tangent bundle of a differentiable manifold $M$ such that its restriction to each tangent space is an asymmetric norm. 
We use the convolution of $F$ with the  standard mollifier in order to construct a mollifier smoothing of $F$, which is a one parameter family of Finsler structures $F_\varepsilon$ that converges uniformly to $F$ on compact subsets of $TM$ as $\varepsilon$ converges to zero. 
We prove that when $F$ is a Finsler structure, then the Chern connection, the Cartan connection, the Hashiguchi connection, the Berwald connection and the flag curvature of $F_\varepsilon$ converges uniformly on compact subsets to the corresponding objects of $F$.
As an application of this mollifier smoothing, we study examples of two-dimensional piecewise smooth Riemannian manifolds with nonzero total curvature on a line segment.
We also indicate how to extend this study to the correspondent piecewise smooth Finsler manifolds.
\end{abstract}

\maketitle

 
\section{Introduction}
\label{introducao}

Let $M$ be a differentiable manifold. Denote its tangent space at $x\in M$ by $T_xM$ and its cotangent space by $T^\ast_xM$. 
Let $TM = \{(x,y);x \in M, y\in T_xM\}$ be its tangent bundle and $T^\ast M = \{(x,\rho); x\in M, \rho \in T^\ast_xM\}$ be its cotangent bundle.
A {\em Finsler structure} on $M$ is a function $F:TM \rightarrow \mathbb R$ which is smooth on the {\em slit tangent bundle} $TM\backslash 0 := \{(x,y)\in TM;y\neq 0\}$ and such that its restriction to each tangent space $F(x,\cdot): T_xM \rightarrow \mathbb R$ is a Minkowski norm.
A differentiable manifold endowed with a Finsler structure is a {\em Finsler manifold}.
A reference book for this subject is \cite{BaoChernShen}. 

The development of Finsler geometry has followed the footsteps of Riemannian geometry, with the use of differential calculus in order to study geometric objects such as connections, curvature and geodesics.
On the other hand there are differences between Finsler manifolds and Riemannian manifolds.
For instance, non-Riemannian Finsler manifolds don't admit a connection which is symmetric and compatible with the metric. In order to overcome this shortcoming, we have several connections on Finsler manifolds (see \cite{BaoChernShen}). 
Moreover, non-Riemannian Finsler manifolds don't admit a canonical volume form (see \cite{Duran-volume}).

A {\em $C^0$-Finsler structure} on $M$ is a continuous function $F:TM\rightarrow \mathbb R$ such that $F(x, \cdot): T_xM \rightarrow \mathbb R$ is an asymmetric norm (An asymmetric norm is a norm without the symmetry condition $F(x,y)=F(x,-y)$).
They are generalizations of Finsler structures and they appear naturally among intrinsic invariant metrics on homogeneous spaces.
More precisely, Berestovski\u \i\ proved in \cite{Berestovskii2} that if $M$ is endowed with a locally compact, locally contractible, homogeneous intrinsic metric $d_M$ (distance function), then $(M,d_M)$ is isometric to the left coset manifold $G/H$ of a Lie group $G$ by a compact subgroup $H$ endowed with a $G$-invariant $C^0$-Carnot-Carath\'eodory-Finsler metric.
Moreover if every orbit of one-parameter subgroups of $G$ under the natural action $G\times G/H \rightarrow G/H$ is rectifiable, then $(M,d_M)$ is isometric to a $C^0$-Finsler manifold.

The theory of $C^0$-Finsler geometry is much less developed than the theory of Finsler geometry because differential calculus can't be applied directly on $C^0$-Finsler structures.
Their geodesics behave very differently from geodesics on Finsler manifolds.
For instance, $\mathbb R^2$ with the maximum norm is naturally isometric to a $C^0$-Finsler manifold and its geodesics aren't necessarily smooth. 
Moreover, if $x \in \mathbb R^2$ and $y \in T_x\mathbb R^2$, then there exist infinite geodesics $\gamma$ such that $\gamma(0)=x$ and $\gamma^\prime (0)=y$.
Another example can be found in \cite{Fukuoka-large-family}, where the author creates a large family of projectively equivalent $C^0$-Finsler manifolds such that ``most of'' their minimizing paths are concatenation of two line segments.
The elements of this family can be obtained by continuous deformations of a fixed $C^0$-Finsler manifold and their minimizing paths have a huge stability that doesn't happen in Finsler geometry.

In this work we construct a {\em mollifier smoothing} of $F$, which is a one parameter family of Finsler structures $F_\varepsilon:TM \rightarrow \mathbb R$ that converges uniformly to $F$ on compact subsets of $TM$ (see Theorem \ref{propriedades da aplicacao F epsilon definida em TM}). 
We prove that if $F$ is a Finsler structure, then the Chern connection, the Cartan connection, the Hashiguchi connection, the Berwald connection and the flag curvature of $F_\varepsilon$ converge uniformly to the respective objects of $F$ on compact subsets (see Theorems  \ref{convergencia uniforme Chern} and \ref{curvatura flag converge uniforme}). 
Therefore the mollifier smoothing works well in the smooth case and we can study geometrical objects of a $C^0$-Finsler manifold $(M,F)$ using approximation by $(M,F_\varepsilon)$. 

\begin{obs}
In \cite{CordovaFukuokaNeves} and \cite{Fukuoka-large-family}, the first author and his collaborators used the term $C^0$-Finsler structure for a function $F: TM \rightarrow \mathbb R$ such that $F(x, \cdot): T_xM \rightarrow \mathbb R$ is a norm.
In this work the term $C^0$-Finsler structure is modified in order to work with non-symmetric structures.
As it happens in Finsler geometry, terms like symmetric (or absolutely homogeneous) $C^0$-Finsler structure will be used in order to deal with the case where $F(x,\cdot): T_xM \rightarrow \mathbb R$ is a norm.
\end{obs}

\begin{obs}
In this work, the Einstein summation convention is in place.
\end{obs}

We outline the construction of $F_\varepsilon$. 
Let $(M,F)$ be a $C^0$-Finsler  manifold.
Let $(x^i) = (x^1,\ldots,x^n): U \rightarrow {\mathbb R^n}$ be a coordinate system in an open subset $U$ of $ M$.
As usual, we denote the basis on the fibers of  $TU$ and $T^*U$ by $\left\{ \frac{\partial}{\partial x^i}\right\} $ and $ \{ dx^i\} $ respectively. 
The coordinate system $(x^i)$ induces the {\em natural coordinate system} $((x^i),(y^i)) = (x^1, \dots, x^n, y^1, \ldots, y^n)$ on $TU$, where
\[
\left( x , y^i \frac{\partial}{\partial x^i} \right) \in TU  \cong  (x^1(x), \ldots, x^n(x), y^1, \ldots, y^n) \in \mathbb R^{2n}.
\]
The tangent bundle $TU$ is identified with $(x^i)(U) \times \mathbb R^n$ and a $C^0$-Finsler structure $F$ is denoted by $F(x,y) = F(x^1, \ldots, x^n, y^1, \ldots, y^n)$.

The mollifier smoothing of $F$ is done in two steps: 
A vertical smoothing, which takes place in each tangent space and a horizontal smoothing, which is done along $M$.

For the vertical smoothing, consider a coordinate system $(x^i):U_0 \subset M\rightarrow {\mathbb R^n}$ defined on an open subset $U_0$.
Let $U$ be an open subset of $M$ with compact closure such that  $\overline{U} \subset U_0$. 
Let $((x^i), (y^i))$ be the corresponding natural coordinate system on $TU$.
Let $\eta:\mathbb R^n \rightarrow \mathbb R$ be the standard mollifier, set $\eta_\varepsilon(y) = \frac{1}{\varepsilon^n}\eta(\frac{y}{\varepsilon})$ and define
\begin{equation}
\label{define zeta epsilon}
\zeta_\varepsilon = (1-u(\varepsilon))\eta_\varepsilon+ u(\varepsilon) \eta_{r},
\end{equation}
where $r>0$ will be defined afterwards and $u:(0,1) \rightarrow \mathbb (0,\infty)$ is a fixed but arbitrary strictly increasing function such that $\lim_{\varepsilon \rightarrow 0} u(\varepsilon) =0$.
Let
\begin{equation}
\zetaastv (x,y) := \int \zeta_\varepsilon(z)F(x,y-z)dz \label{suaviza vertical}
\end{equation}
be the convolution along the fibers of $TU$.
This function is smooth in each tangent space but $\zetaastv (x, \cdot)$ is not an asymmetric norm. 
However there exist $r_U>0$ such that for every $x \in U$,  $\zetaastv^{-1}(r_U) \cap T_xU$ is diffeomorphic to a smooth sphere of a Minkowski norm in $T_xU$.
The {\em local vertical smoothing of $F$ is} the continuous function
\[
G_\varepsilon:TU \rightarrow \mathbb R
\]
such that $G_\varepsilon(x, \cdot):T_xM \rightarrow \mathbb R$ is the Minkowski norm with sphere of radius $r_U$ equals to $ \zetaastv^{-1}(r_U) \cap T_xU$. 
The perturbation $u(\varepsilon) \eta_r$ of $(1-u(\varepsilon ))\eta_\varepsilon$ is used in order to assure that the sphere $\zetaastv^{-1}(r_U) \cap T_xU$ is strongly convex.
The only property lacking for $G_\varepsilon$ to be a Finsler structure is its horizontal differentiability, that is, the smoothness with respect to the variables $(x^1, \ldots, x^n)$. 
So we define the {\em local horizontal smoothing} $F_{\varepsilon,U_\varepsilon}$ by
\begin{equation}
F^2_{\varepsilon,U_\varepsilon}(x,y) =\int \eta_\varepsilon(x-z)G^2_\varepsilon(z,y) dz, \, (x,y) \in TU_\varepsilon, \label{suavizacao horizontal local}
\end{equation}
where $U_\varepsilon=\{x\in U| \dist_{\mathbb R^n}(x,\partial U)> \varepsilon\}.$
This function is a Finsler structure on $U_\varepsilon$.
Finally, using a differentiable partition of unity, we obtain a mollifier smoothing $F_\varepsilon:TM \rightarrow \mathbb R$ of $F$. 

In \cite{Setti}, the author consider
\[
\zeta_\varepsilon = (1-\varepsilon)\eta_\varepsilon + \varepsilon \eta_r
\]
instead of (\ref{define zeta epsilon}) and define the local horizontal smoothing by 
\begin{equation}
\label{suavizacao horizontal local 2}
F_{\varepsilon,U_\varepsilon}(x,y) =\int \eta_\varepsilon(x-z)G_\varepsilon(z,y) dz, \, (x,y) \in TU_\varepsilon{\color{red}.}
\end{equation}
He proves that all the results of Sections \ref{secao mollifier smoothing F}, \ref{secao smoothing finsler}
and \ref{secao convergencia conexoes curvatura} of the present work hold with (\ref{suavizacao horizontal local}) replaced by (\ref{suavizacao horizontal local 2}).
But we think that calculations with the smoothing (\ref{suavizacao horizontal local}) are easier to be done and more flexible for applications.
For instance, if $F$ is a Finsler structure, then the fundamental tensor of $F_{\varepsilon, U_\varepsilon}$ is the mollifier smoothing of the fundamental tensor of $F$.

The importance of mollifier smoothings in the theory of partial differential equations is well known. 
It is used to approximate locally integrable functions by smooth functions.
However its application in geometric structures is much less common.
In \cite{Davini}, the author studies the local theory of the weak Finsler structures. Let $U \subset {\mathbb R^n}$ be an open subset. A  Borel  measurable function $F:U \times {\mathbb R^n} \rightarrow \mathbb R $ is a weak Finsler structure in $U$ if $F(x,\cdot)$ is positively homogeneous for every $x\in \bar U$, $F(x,\cdot)$ is convex almost everywhere and there exist $a,b>0$ such that $a\Vert \xi \Vert \leq F(x, \xi) \leq b \Vert \xi\Vert$ for every $(x,\xi) \in \bar U \times \mathbb R^n$. 
In Theorem 4.5 of \cite{Davini}, the author considers the case where $F$ is continuous and he define a horizontal smoothing of $F$ essentially as (\ref{suavizacao horizontal local 2}). 

In \cite{Fukuoka-smoothing-riemannian}, the author considers Riemannian metrics $g$ with coefficients $g_{ij}$ in some local Sobolev space. 
The mollifier smoothing of these coefficients gives locally a one parameter family of Riemannian metrics $g_\varepsilon$ of class $\mathit{C}^\infty$ and it is equivalent to the horizontal smoothing (\ref{suavizacao horizontal local}) for the Sobolev setting.
The author shows that if $g$ is of class $\mathit{C}^\infty,$ then the Riemannian connection and the curvature tensor of $g_\varepsilon$ converges to the respective objects of $g$ as $\varepsilon$ converges to zero.
In this present work we prove some generalizations of this result for the Finsler setting (see Theorems \ref{convergencia uniforme Chern} and
\ref{curvatura flag converge uniforme}).

The main contribution of this work is to bring a tool from functional analysis to geometry, prove that it is effective when applied to the smooth case and provide an example of a non-smooth case where calculations can be done. 
This approximation of $C^0$-Finsler manifolds by Finsler manifolds is in the spirit of mollifier smoothings, where the derivatives of the smoothing converges to the correspondent weak derivatives of the smoothed function: 
In fact, the connections depend on $F^2_\varepsilon$ and its derivatives of order up to three, the flag curvature depends on $F^2_\varepsilon$ and its derivatives of order up to four and we expect that their limits give us geometrical information about $(M,F)$.
The main technical difficulty is the control of the vertical smoothing, which is done in spheres instead of in $F$ directly.

This work is organized as follows: In  Section \ref{preliminares} we present preliminary results that are necessary for the development of this work.
We present the theory of mollifier smoothings, asymmetric norms, Minkowski norms and Finsler geometry. Although lengthy, we think that this presentation is necessary because it deals with topics that aren't usually presented together.
In Section \ref{secao mollifier smoothing} we prove that the mollifier smoothing of a convex function is convex.
The strongly convex case is analysed because it is necessary that $F^2_\varepsilon$ has positive definite Hessian with respect to $(y^1, \ldots, y^n)$.
In Section \ref{section growth rates} we control the growth rate of asymmetric norms in an ``almost'' radial direction.
This is important in order to guarantee  the existence of a regular value $r_U > 0$ of  $\zetaastv$ such that the level set $\zetaastv^{-1}(r_U) \cap T_xU$ is the sphere of a Minkowski norm for every sufficiently small $\varepsilon$. 
Section \ref{section vertical smoothing} is devoted to the definition of the local vertical smoothing and the study of its properties.
In Section \ref{secao mollifier smoothing F} we define the mollifier smoothing $F_\varepsilon$ of $F$ and we prove that $F_\varepsilon$ converges uniformly to $F$ on compact subsets of $TM$ as $\varepsilon$ goes to zero.
In Section \ref{secao smoothing finsler}, we prove that if $F$ is a Finsler structure, then $G_\varepsilon$ and its partial derivatives converge uniformly to $F$ and their respective partial derivatives on compact subsets of $TU$.
Analogous uniform convergence results hold for the mollifier smoothing $F_{\varepsilon,U_\varepsilon}$.
In Section \ref{secao convergencia conexoes curvatura}, we prove that if $F$ is a Finsler structure, then the Chern connection, the Cartan connection, the Hashiguchi connection, the Berwald connection and the flag curvature of $F_\varepsilon$ converges uniformly on compact subsets to the correspondent objects of $F$ when $\varepsilon$ goes to zero.
In Section \ref{secao exemplos}, we apply the mollifier smoothing in order to study the curvature of a family of piecewise smooth Riemannian metrics.
In the two-dimensional case, we show that the total curvature can be concentrated on a subset of measure zero of $M$.
In Section \ref{secao obs finais}, we make final comments and we make suggestions for future works.

Most of this work was developed during the PhD of the second author under the supervision of the first author at State University of Maring\'a, Brazil.
The second author was supported by a CAPES PhD fellowship.
The authors would like to thank professors Bruno Mendon\c ca Rey dos Santos, Josiney Alves de Souza, Lino Anderson da Silva Grama and Patricia Hernandes Baptistelli for their valuable suggestions.

\section{Preliminaries}
\label{preliminares}

In this section, we present the preliminary subjects used for the development of this work. 
It is divided in three subsections: Convolution and smoothing, asymmetric norms and Finsler geometry.
As far as we know, Proposition \ref{funcao convergencia uniforme} and Theorem \ref{control minkowski boundary} are new (although they are intuitive and not difficult), and their proofs are placed in this section for the sake of convenience.  

\subsection{Convolution and smoothing}

In this subsection we present the theory of mollifier smoothings of continuous functions in Euclidean spaces. 
For the sake of simplicity, we restrict the presentation of the theory only for the continuous case, but the mollifier smoothing can be defined more generally on locally integrable functions. 
For more details of this topic and other topics presented in this subsection, see \cite{Evans}. 

\begin{defi}
\
\begin{enumerate}
\item A vector of the form $\alpha=(\alpha_1,\ldots,\alpha_n),$ where each component $\alpha_i$ is a  non-negative integer, is called a {\em multiindex} of order
\[
|\alpha|=\alpha_1+\cdots+\alpha_n;
\]
\item Given a  multiindex $\alpha,$ define
\[
D^\alpha f(x):= \frac{\partial^{|\alpha|}f(x)}{\partial (x^1)^{\alpha_1}\cdots\partial (x^n)^{\alpha_n}},
\]
where $f$ is a smooth real valued function defined in an open subset of ${\mathbb R^n}.$
\end{enumerate}
\end{defi}

Let $U$ be an open subset of ${\mathbb R}^n,$ $\varepsilon>0$ and $U_{\varepsilon} := \{x\in U \ | \ \dist(x,\partial U)>\varepsilon\}$, where $\partial U$ is the boundary of $U$.

\begin{defi} 
\label{define mollifier}

\

\begin{itemize}
\item[(i)] The {\em standard mollifier} $\eta \in \mathit{C}^{\infty}({\mathbb R}^n)$ is defined by
\begin{eqnarray*}
\eta(x) & :=& \begin{cases}

             C\exp\left(\frac{1}{ \|x\|^2-1}\right),  \quad & {\rm{if  \quad }}\|x\|<1, \\

             0, & {\rm{if}} \quad \|x\|\geq 1,

       \end{cases}
\end{eqnarray*}
where the constant $C> 0$ is chosen so that $\int_{{\mathbb R}^n}\eta \ dx=1;$

\item[(ii)]  For  each $\varepsilon>0,$ define
\begin{equation}
\label{epsilon-mollifier}
\eta_{\varepsilon}(x) := \frac{1}{\varepsilon^n}\eta\left(\frac{x}{\varepsilon}\right).
\end{equation}
\end{itemize}
\end{defi} 
Notice that
\[
\int_{{\mathbb R}^n}\eta_{\varepsilon} \ dx = \int_{{\mathbb R}^n}\eta \ dx=1  \quad {\rm and} \quad {\rm{supp}}(\eta_{\varepsilon})=B[0,\varepsilon],
\]
where $\supp (\eta_{\varepsilon})$ stands for the support of $\eta_{\varepsilon}$.

\begin{defi} If $f:U \rightarrow {\mathbb R}$ is a continuous function, its {\em mollifier smoothing} $ \eta_{\varepsilon}*f$ is the convolution of $f$ and $\eta_\varepsilon$ in $U_\varepsilon$, that is,
\[
(\eta_{\varepsilon}*f)(x)  := \int_U\eta_{\varepsilon}(x-y)f(y)dy = \int_{B[0,\varepsilon]}\eta_\varepsilon(y)f(x-y)dy,
\]
for every $x\in U_\varepsilon$.
\end{defi}

We are going towards Proposition \ref{funcao convergencia uniforme}, which states the uniform convergence of ``partial'' mollifier smoothings on compact subsets.
Convolutions (\ref{suaviza vertical}) and (\ref{suavizacao horizontal local}) are instances of this type of mollifier smoothing.

\begin{defi}
Let $k\in \{1, \ldots, n-1\}$ be an integer number and $\varepsilon >0$. 
Decompose $\mathbb R^n$ as $\mathbb R^k \times \mathbb R^{n-k}$ and denote its variables by $x=(x^1, \ldots, x^k)$ and $y = (x^{k+1}, \ldots, x^n)$. 
Let $U$ be an open subset of $\mathbb R^k$ and $V$ be an open subset of $\mathbb R^{n-k}$. 
Consider a continuous function $f: U \times V \rightarrow \mathbb R$ and let $\eta_\varepsilon : B(0,\varepsilon) \subset \mathbb R^k \rightarrow \mathbb R$ as in (\ref{epsilon-mollifier}). 
The mollifier smoothing $(\eta_\varepsilon \ast_1 f): U_\varepsilon \times V \rightarrow \mathbb R$ of $f$ with respect to the first $k$ variables is defined as
\[
(\eta_\varepsilon \ast_1 f) (x,y) = \int \eta_\varepsilon (z) f(x - z, y)dz.
\]
The mollifier smoothing $(\eta_\varepsilon \ast_2 f)$ with respect to the last $n-k$ variables is defined analogously.
\end{defi}

\begin{pro}
\label{funcao convergencia uniforme}
Let $U$ be an open subset of $\mathbb R^k$, $V$ be an open subset of $\mathbb R^{n-k}$ and $f: U \times V \rightarrow \mathbb R$ be a continuous function. Then $(\eta_\varepsilon \ast_1 f)$ converges uniformly to $f$ on compact subsets of $U \times V$. 
The same result holds for $(\eta_\varepsilon \ast_2 f)$.
\end{pro}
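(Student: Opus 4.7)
\medskip

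The plan is to mimic the standard proof of uniform convergence of mollifier smoothings on compact sets, taking care that the smoothing only acts on the first $k$ variables while $y \in V$ plays the role of a parameter.

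Fix a compact subset $K \subset U \times V$, and let $K_1 \subset U$ and $K_2 \subset V$ be its projections onto $U$ and $V$, both compact. First I would pick $\varepsilon_0 > 0$ small enough that the closed $\varepsilon_0$-neighborhood $K_1' := \overline{K_1 + B[0,\varepsilon_0]}$ is still contained in $U$; this is possible because $K_1$ is compact and $U$ is open, so $\dist(K_1, \partial U) > 0$. Then $K' := K_1' \times K_2$ is a compact subset of $U \times V$. Since $f$ is continuous on $U \times V$, its restriction to $K'$ is uniformly continuous.

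Next, using $\int \eta_\varepsilon(z)\,dz = 1$ and $\supp(\eta_\varepsilon) \subset B[0,\varepsilon]$, I would write, for every $\varepsilon \leq \varepsilon_0$ and every $(x,y) \in K$,
\[
(\eta_\varepsilon \ast_1 f)(x,y) - f(x,y) = \int_{B[0,\varepsilon]} \eta_\varepsilon(z)\bigl[f(x-z,y) - f(x,y)\bigr]\,dz,
\]
noting that both $(x-z,y)$ and $(x,y)$ lie in $K'$ whenever $|z| \leq \varepsilon \leq \varepsilon_0$. Given $\delta > 0$, uniform continuity of $f$ on $K'$ yields some $\rho \in (0,\varepsilon_0]$ such that $|f(p) - f(q)| < \delta$ whenever $p,q \in K'$ satisfy $\|p - q\| < \rho$. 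For $\varepsilon < \rho$ this bound applies to the integrand, and using $\eta_\varepsilon \geq 0$ with unit integral,
\[
\bigl|(\eta_\varepsilon \ast_1 f)(x,y) - f(x,y)\bigr| \leq \delta \int_{B[0,\varepsilon]} \eta_\varepsilon(z)\,dz = \delta,
\]
uniformly in $(x,y) \in K$. This gives the desired uniform convergence on $K$, and hence on every compact subset of $U \times V$. The argument for $(\eta_\varepsilon \ast_2 f)$ is identical, with the roles of $U$ and $V$ exchanged.

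There is no real obstacle here; the only point that requires a little care is ensuring the auxiliary compact set $K'$ stays inside $U \times V$ so that uniform continuity can be invoked legitimately, which is handled by the choice of $\varepsilon_0$ above.
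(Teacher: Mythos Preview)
Your proof is correct. It differs from the paper's argument in its organization: you enlarge $K$ once to a compact set $K' = K_1' \times K_2 \subset U \times V$ and invoke uniform continuity of $f$ on $K'$ directly, so a single $\rho$ controls the integrand for all $(x,y)\in K$. The paper instead works pointwise: for each $(x_0,y_0)\in K_{U\times V}$ it builds a neighborhood $U_0\times V_0$ on which continuity gives $|f(x,y)-f(x_0,y_0)|<\delta/2$, shows the mollified function is within $\delta/2$ of $f(x_0,y_0)$ on that neighborhood for small $\varepsilon$, and then passes to a finite subcover of $K_{U\times V}$ to obtain a uniform $\varepsilon$. Your route is shorter and more standard; the paper's route avoids naming uniform continuity explicitly but effectively reproves it via the finite cover. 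Neither approach offers a real advantage beyond taste, and your careful choice of $\varepsilon_0$ to keep $K'\subset U\times V$ is exactly the point that needs attention.
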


\begin{proof}
Let $K_{U\times V}$ be a compact subset of $U \times V$.
In what follows, we always suppose that $\varepsilon \in (0, \dist (K_{U\times V}, \partial (U \times V)))$.
Fix $(x_0, y_0) \in K_{U \times V}$ and $\delta > 0$.
Let $\tilde U_0 \subset U$ be a neighborhood of $x_0$ and $V_0 \subset V$ be a neighborhood of $y_0$ such that 
\begin{equation}
\label{define u0 til v0}
\vert f(x,y) - f(x_0,y_0)\vert < \delta / 2
\end{equation}
for every $(x,y) \in \tilde U_0 \times V_0$.
Let $U_0$ be a neighborhood of $x_0$ with compact closure such that $\overline U_0 \subset \tilde U_0$.
We claim that there exist $\varepsilon>0$ such that
\begin{equation}
\label{objetivo uniforme 2}
\vert (\eta_\varepsilon \ast_1 f) (x,y) - f(x,y)\vert < \delta
\end{equation}
for every $(x,y) \in U_0 \times V_0$.

Set $\varepsilon^\prime = \dist (\partial U_0, \partial \tilde U_0)>0$.
If $\varepsilon\in (0, \varepsilon^\prime)$, then
\begin{equation}
\label{diferencao f x0 y0 e f til}
(\eta_\varepsilon \ast_1 f) (x,y) 
= \int \eta_\varepsilon (z) f(x - z,y) dz \in \left( f(x_0, y_0) -\frac{\delta}{2}, f(x_0, y_0) + \frac{\delta}{2}\right)
\end{equation}
for every $(x,y) \in U_0 \times V_0$ due to the choice of $\varepsilon$ and (\ref{define u0 til v0}).
Therefore (\ref{objetivo uniforme 2}) follows for every $\varepsilon \in (0, \varepsilon^\prime)$ and $(x,y) \in U_0 \times V_0$ due to (\ref{define u0 til v0}) and (\ref{diferencao f x0 y0 e f til}).

In order to prove that $(\eta_\varepsilon \ast_1 f)$ converges uniformly to $f$ on $K_{U \times V}$, it is enough to cover $K_{U \times V}$ by a finite number of open subsets of type $U_0 \times V_0$ and choose the minimum of all $\varepsilon^\prime$.

The proof for $(\eta_\varepsilon \ast_2 f)$ is analogous.
\end{proof}

\begin{lem}
\label{derivada zeta vertical continua} 
Let $U$ be an open subset of $\mathbb R^k$, $V$ be an open subset of $\mathbb R^{n-k}$ and $f: U \times V \rightarrow \mathbb R$ be a continuous function.
Let $\varepsilon > 0$ and $\alpha$ be a multiindex with respect to the $k$ first variables of $U \times V$.
Let $(x_0,y_0) \in U \times V$ such that $\dist_{\mathbb R^n} ((x_0,y_0), \partial (U \times V)) > \varepsilon$.
Then $D^\alpha (\eta_\varepsilon \ast_1 f)$ is continuous at $(x_0,y_0)$.
The same result holds for $D^\alpha (\eta_\varepsilon \ast_2 f)$ if $\alpha$ is a multiindex with respect to the last $n-k$ variables.
\end{lem}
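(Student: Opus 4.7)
The plan is to justify differentiation under the integral sign to obtain the identity
$$D^\alpha(\eta_\varepsilon \ast_1 f)(x,y) = \int (D^\alpha \eta_\varepsilon)(x - w)\, f(w,y)\, dw,$$
and then to deduce continuity of the right hand side at $(x_0,y_0)$ from the dominated convergence theorem. The key is that $\eta_\varepsilon$ is smooth with compact support, so all its derivatives are bounded and share a common compact support; this handles both differentiation under the integral and the final continuity estimate in spite of $f$ being only continuous.

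First I would fix an open neighborhood $W$ of $(x_0,y_0)$ with compact closure $\overline W \subset U \times V$ and $\dist(\overline W, \partial(U \times V)) > \varepsilon$, and denote by $W_U$ and $W_V$ the projections of $\overline W$ onto $U$ and $V$. After the change of variables $w = x - z$ one may rewrite
$$(\eta_\varepsilon \ast_1 f)(x,y) = \int \eta_\varepsilon(x - w)\, f(w,y)\, dw,$$
and for every $(x,y) \in W$ the integrand vanishes outside the compact set $K := \{w \in \mathbb R^k : \dist(w, W_U) \leq \varepsilon\} \subset U$, which does not depend on $(x,y)$. Continuity of $f$ on $K \times W_V$ yields a uniform bound on $|f(w,y)|$ for all $(w,y)$ that contribute to the integral.

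Next, I would proceed by induction on $|\alpha|$. Writing $\alpha = \beta + e_i$ and forming the $x^i$-difference quotient of the previous display, the mean value theorem bounds $(\eta_\varepsilon(x + he_i - w) - \eta_\varepsilon(x - w))/h$ by $\|\partial_i \eta_\varepsilon\|_\infty$, with support inside a fixed compact set for all sufficiently small $h$, so the dominated convergence theorem yields $\partial_i(\eta_\varepsilon \ast_1 f)(x,y) = \int (\partial_i \eta_\varepsilon)(x - w)\, f(w,y)\, dw$ on $W$. Iterating the argument gives the displayed identity for $D^\alpha(\eta_\varepsilon \ast_1 f)$ throughout $W$.

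For continuity at $(x_0,y_0)$, let $(x_n,y_n) \to (x_0,y_0)$ in $W$. For each $w$ the integrand $(D^\alpha \eta_\varepsilon)(x_n - w)\, f(w,y_n)$ converges pointwise to $(D^\alpha \eta_\varepsilon)(x_0 - w)\, f(w,y_0)$ by continuity of $D^\alpha \eta_\varepsilon$ and $f$, and is dominated on $\mathbb R^k$ by the integrable function $\|D^\alpha \eta_\varepsilon\|_\infty \cdot \max_{K \times W_V} |f| \cdot \mathbf{1}_K(w)$. A final application of dominated convergence gives $D^\alpha(\eta_\varepsilon \ast_1 f)(x_n,y_n) \to D^\alpha(\eta_\varepsilon \ast_1 f)(x_0,y_0)$, as required; the proof for $D^\alpha(\eta_\varepsilon \ast_2 f)$ is verbatim with the two factors interchanged. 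The only subtle point is keeping supports under control when applying the dominated convergence theorem given that $f$ need not be more than continuous, and that is exactly what the compactness of $\supp(\eta_\varepsilon)$ provides.
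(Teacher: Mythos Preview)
Your argument is correct and arrives at the same key identity as the paper,
\[
D^\alpha(\eta_\varepsilon\ast_1 f)(x,y)=\int (D^\alpha\eta_\varepsilon)(x-w)\,f(w,y)\,dw,
\]
but the surrounding machinery differs slightly. You justify differentiation under the integral sign carefully by induction on $|\alpha|$ via the mean value theorem and dominated convergence, and then obtain continuity at $(x_0,y_0)$ by a second application of dominated convergence along sequences. The paper instead writes the above identity directly (without spelling out the induction) and proves continuity by an explicit $\varepsilon$--$\delta$ estimate: it uses uniform continuity of $f$ on a compact enlargement $B[\bar U_0\times\bar V_0,\varepsilon]$ to bound $|f(x-z,y)-f(x_0-z,y_0)|$ uniformly in $z$, and then integrates against $|D^\alpha\eta_\varepsilon|$. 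Your route has the advantage of making the differentiation step rigorous where the paper is terse; the paper's direct estimate is a bit more quantitative and avoids invoking DCT. Either way the essential ingredients are the same: compact support and smoothness of $\eta_\varepsilon$, together with local boundedness/uniform continuity of $f$.
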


\begin{proof}
We will prove that for every $\delta>0$, there exist a $\mu >0$ such that if $\dist_{\mathbb R^n}((x,y),$ $(x_0,y_0)) < \mu$, then 
\[
\left\vert D^\alpha (\eta_\varepsilon \ast_1 f) (x,y) - D^\alpha (\eta_\varepsilon \ast_1 f) (x_0,y_0) \right\vert < \delta.
\]

Notice that $D^\alpha \left( \eta_\varepsilon \ast_1 f\right)$ is defined on a neighborhood $U_0 \times V_0$ of $(x_0,y_0)$ with compact closure because $\dist_{\mathbb R^n} ((x_0,y_0), \partial (U \times V)) > \varepsilon$.
Moreover we can suppose that $\dist_{\mathbb R^n} (\bar U_0 \times \bar V_0,\partial (U \times V)) > \varepsilon$.
Denote
\[
B \left[ \bar U_0 \times \bar V_0 , \varepsilon \right] = \left\{(x,y) \in \mathbb R^k \times \mathbb R^{n-k}; \dist_{\mathbb R^n}((x,y),\bar U_0 \times \bar V_0) \leq \varepsilon \right\}  \subset U \times V.
\]

For every $\delta > 0$, there exist a $\mu >0$ such that 
\[
\vert f(x_1,y_1) - f(x_2,y_2)\vert < \frac{\delta}{\int \left\vert \left( D^\alpha \eta_\varepsilon \right) (z) \right\vert dz}
\] 
whenever $(x_1,y_1)$, $(x_2,y_2) \in B[\bar U_0 \times \bar V_0, \varepsilon]$ and $\dist_{\mathbb R^n}((x_1, y_1),(x_2,y_2))< \mu$.

Suppose that $(x,y) \in U_0 \times V_0$ is such that $\dist_{\mathbb R^n}((x,y),(x_0,y_0)) < \mu$.
Then
\begin{eqnarray}
& & D^\alpha (\eta_\varepsilon \ast_1 f)(x,y) = D^\alpha \left( \int \eta_\varepsilon (x-z) f(z,y) dz \right)  \nonumber \\ 
& = &  \int \left( D^\alpha \eta_\varepsilon \right) (x-z) f(z,y) dz
= \int \left( D^\alpha \eta_\varepsilon\right) (z) f(x-z,y) dz \nonumber
\end{eqnarray}
and
\begin{eqnarray}
& & \left\vert D^\alpha (\eta_\varepsilon \ast_1 f) (x,y) - D^\alpha (\eta_\varepsilon \ast_1 f) (x_0,y_0) \right\vert \nonumber \\ 
& & \leq \int \left\vert \left( D^\alpha \eta_\varepsilon \right) (z) \right\vert \left\vert f(x-z,y) - f(x_0 - z,y_0) \right\vert dz < \delta. \nonumber 
\end{eqnarray}
Therefore $D^\alpha (\eta_\varepsilon \ast_1 f)$ is continuous at $(x_0,y_0)$.

The proof when $\alpha$ is a multiindex with respect to the last $n-k$ variables is analogous.
\end{proof}

\subsection{Asymmetric norms}

In this subsection we present the theory of asymmetric norms and Minkowski norms which are used in this work.
The references for this subsection are \cite{BaoChernShen} and \cite{Cobzas}. 
Issues related to convex analysis can be found in \cite{RockafellarTyrrell} and \cite{SunYuan}.

\begin{defi}
\label{def asymmetric norm}
An {\em asymmetric norm} on a real vector space $\mathbb V$ is a non-negative function $F:\mathbb V \rightarrow \mathbb R$ that satisfies the following conditions:
\begin{enumerate}
\item $F(y)=0$ only if $y=0$;
\item $F(\mu y)=\mu F(y)$ for every $\mu \in [0,\infty)$ and $y\in \mathbb V$;
\item $F(y+z) \leq F(y) + F(z)$ for every $y,z \in \mathbb V$.
\end{enumerate}
\end{defi}

\begin{obs}
If an asymmetric norm $F:\mathbb V \rightarrow \mathbb R$ satisfies $F(y)=F(-y)$ for every $y \in \mathbb V$, then $F$ is a norm.
\end{obs}

\begin{defi}
Let $\mathbb V$ be a real vector space endowed with an asymmetric norm $F$.
The {\em open ball} centered at $y$ and radius $r$ is the subset
\[
B_F(y,r) = \{z \in \mathbb V;F(z-y) < r\}.
\]
The {\em closed ball} centered at $y$ and radius $r$ is the subset
\[
B_F[y,r] = \{z \in \mathbb V;F(z-y) \leq r\}.
\]
The {\em sphere} centered at $y$ and radius $r$ is the subset
\[
S_F[y,r] = \{z \in \mathbb V;F(z-y) = r\}.
\]
\end{defi}

\begin{obs}
It is straightforward that every real valued function $F: \mathbb V \rightarrow \mathbb R$ that satisfies Items (2) and (3) of Definition \ref{def asymmetric norm} is convex.
In particular, if $F$ is an asymmetric norm, then $B_F(y,r)$ and $B_F[y,r]$ are convex.
\end{obs}

A special case of asymmetric norm are Minkowski norms, which play a fundamental role in Finsler geometry.

\begin{defi}
\label{norma de Minkowski} 
A function $F:\mathbb V \rightarrow \mathbb R$ is a Minkowski norm if 
\begin{itemize}
\item[(i)] $F$ is smooth in $\mathbb V\backslash\{0\}$;
\item[(ii)] $F(\mu y) = \mu F(y)$ for every $\mu > 0$ and $y\in \mathbb V$;
\item[(iii)] If $(y^1, \ldots, y^n)$ is a coordinate system of $\mathbb V$ with respect to a basis of $\mathbb V$, then the  $n\times n$ hessian matrix
\[
\left( g_{ij(y)} \right) := \left( \left[\frac{1}{2} F^2 (y) \right]_{y^iy^j} \right), \hspace{5mm} i,j=1, \ldots, n,
\] 
is positive definite for every $y \in \mathbb V\backslash \{0\}$, where the subscript $y^iy^j$ stands for the partial derivatives with respect to $y^i$ and $y^j$.
\end{itemize}
\end{defi}

\begin{teo}
Let $F$ be a Minkowski norm on $\mathbb V$.
Then $F(y) > 0$ if $y\neq 0$ and $F(y+z)\leq F(y) + F(z)$ for every $y,z \in \mathbb V$.
In particular, $F$ is an asymmetric norm.
\end{teo}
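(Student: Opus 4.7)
The proof breaks into three steps: positivity of $F$ on $\mathbb V\setminus\{0\}$, a fundamental Cauchy--Schwarz inequality arising from the positive definiteness of $(g_{ij}(y))$, and the triangle inequality deduced from it. The remaining asymmetric-norm axioms are then automatic: $F(0)=0$ follows from item (ii) of Definition \ref{norma de Minkowski} with $\mu=2$, and together with positivity this gives both $F\geq 0$ and the first axiom of Definition \ref{def asymmetric norm}.

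For positivity I would apply Euler's theorem to the positively $2$-homogeneous function $F^2$. Differentiating $F^2(\mu y)=\mu^2 F^2(y)$ at $\mu=1$ gives $(F^2)_{y^i}(y)\,y^i=2F^2(y)$, and applying the same trick to the $1$-homogeneous function $(F^2)_{y^i}$ yields $2g_{ij}(y)y^j = (F^2)_{y^i}(y)$. Contracting with $y^i$ produces the identity
\[
F^2(y) = g_{ij}(y)\,y^i y^j, \qquad y\neq 0,
\]
which by condition (iii) of Definition \ref{norma de Minkowski} is strictly positive. Since $F$ is real-valued and non-negative by the standard Finsler convention, we conclude $F(y)>0$ for $y\neq 0$.

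Granting the fundamental inequality $g_{ij}(y)\,y^i v^j \leq F(y)F(v)$ for every $y\neq 0$ and $v\in\mathbb V$, the triangle inequality follows by expanding via the above identity at $w=y+z$ (assuming $y+z\neq 0$):
\[
F^2(y+z) = g_{ij}(y+z)(y+z)^i y^j + g_{ij}(y+z)(y+z)^i z^j \leq F(y+z)\bigl(F(y)+F(z)\bigr),
\]
and dividing by $F(y+z)>0$; the case $y+z=0$ is immediate from $F\geq 0$.

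The main obstacle is the fundamental inequality itself. My plan is to fix $y\neq 0$, reduce to $F(v)=F(y)$ by positive homogeneity in $v$, and study $\varphi(t)=\tfrac{1}{2}F^2\bigl((1-t)y+tv\bigr)$ on $[0,1]$. Provided the segment avoids the origin (which happens precisely when $v$ is not a strictly negative multiple of $y$), a direct computation gives $\varphi''(t)=g_{ij}\bigl((1-t)y+tv\bigr)(v-y)^i(v-y)^j\geq 0$, so $\varphi$ is convex and $\varphi(1)\geq\varphi(0)+\varphi'(0)$. Using $(F^2)_{y^i}(y)=2g_{ij}(y)y^j$, this unwinds to $g_{ij}(y)y^i v^j \leq \tfrac{1}{2}\bigl(F^2(y)+F^2(v)\bigr)$, which under the normalization $F(v)=F(y)$ is exactly $g_{ij}(y)y^iv^j\leq F(y)F(v)$; scaling $v$ then gives the general case. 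The remaining degenerate case $v=\mu y$ with $\mu<0$ is trivial because the left side is $\mu F^2(y)\leq 0\leq F(y)F(v)$.
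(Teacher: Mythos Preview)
The paper does not actually prove this theorem; it is stated without proof as a standard fact from \cite{BaoChernShen}. Your argument is correct and is essentially the classical one found there: Euler's relation gives $F^2(y)=g_{ij}(y)y^iy^j>0$, the fundamental inequality $g_{ij}(y)y^iv^j\le F(y)F(v)$ follows from convexity of $t\mapsto\tfrac12 F^2((1-t)y+tv)$ along segments avoiding the origin, and the triangle inequality drops out by expanding $F^2(y+z)$ at the flagpole $y+z$.

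One small caveat worth flagging: the paper's Definition~\ref{norma de Minkowski} does not explicitly require $F\ge 0$, so the inference ``$F^2(y)>0$ hence $F(y)>0$'' rests on the non-negativity convention you invoke. That convention is indeed part of the definition in \cite{BaoChernShen}, and without it the statement would fail (since $-F$ would satisfy (i)--(iii) as well); your acknowledgement of this point is appropriate.
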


The following theorem is very intuitive and characterizes Minkowski norms. 
We give its proof for the sake of completeness.

\begin{teo}
\label{control minkowski boundary}
Consider $\mathbb R^n$ endowed with the Euclidean  metric.
Let $F:\mathbb R^n \rightarrow \mathbb R$ be an asymmetric norm such that its restriction to $\mathbb R^n\backslash \{0\}$ is smooth.
For $r>0$, consider $S_F[0,r]$ endowed with the Riemannian metric induced by its embedding in $\mathbb R^n$ and oriented by its inward normal vector field. 
Then the following statements are equivalent:
\begin{enumerate}
\item $F$ is a Minkowski norm;
\item $\Hess F$ has rank $n-1$ on $\mathbb R^n\backslash \{0\}$;
\item For every $r>0$, $S_F[0,r]$ is locally a graph of a function $\psi:U \subset \mathbb R^{n-1} \rightarrow \mathbb R$, where $\mathbb R^{n-1}$ is a totally geodesic submanifold of $\mathbb R^n$, $\psi(0)=0$, $d\psi_0 \equiv 0$ and $\Hess \psi$ is positive definite;
\item $S_F[0,1] \hookrightarrow \mathbb R^n$ has positive sectional curvature;
\item All the principal curvatures $\kappa_1, \ldots, \kappa_{n-1}$ of $S_F[0,1] \hookrightarrow \mathbb R^n$ are strictly positive.
\end{enumerate}
\end{teo}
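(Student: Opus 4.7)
The plan is to establish the equivalences in the cycle (1)$\Leftrightarrow$(2)$\Leftrightarrow$(3)$\Leftrightarrow$(5)$\Leftrightarrow$(4), driven throughout by Euler's theorem for the $1$-homogeneous function $F$. Two of its consequences are used repeatedly: $F_{y^i}(y) y^i = F(y)$, so $\nabla F(y) \cdot y = F(y) > 0$ for $y \neq 0$; and $F_{y^i y^j}(y) y^j = 0$, so $\mathrm{span}(y) \subseteq \ker F_{y^i y^j}(y)$. Since an asymmetric norm is convex, $F_{y^i y^j}$ is in addition positive semi-definite wherever it is smooth, hence of rank at most $n-1$.

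For (1)$\Leftrightarrow$(2), I would expand $g_{ij} v^i v^j = F\cdot F_{y^i y^j} v^i v^j + (F_{y^i} v^i)^2$ and decompose $v$ along $\mathrm{span}(y)$ and a complement. When the rank of $F_{y^iy^j}$ is $n-1$, for every $v\neq 0$ one of the two summands is strictly positive, giving $g>0$; when it is smaller, combining a kernel vector outside $\mathrm{span}(y)$ with a suitable multiple of $y$ produces a nonzero $w$ that kills both summands, contradicting positive definiteness of $g$.

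For (2)$\Leftrightarrow$(3), homogeneity gives $F_{y^i y^j}(\lambda y)=\lambda^{-1}F_{y^i y^j}(y)$, so it suffices to argue at a single sphere. Fixing $y\in S_F[0,r]$, the gradient $\nabla F(y)$ is nonzero, so the implicit function theorem yields a local graph; parameterize the sphere as $y + u^i e_i + \psi(u)e_n$ with $\{e_1,\dots,e_{n-1}\}$ an orthonormal basis of $T_y S_F[0,r]=(\nabla F(y))^\perp$ and $e_n = -\nabla F(y)/|\nabla F(y)|$ the inward unit normal. Differentiating $F=r$ twice at $u=0$ gives $d\psi(0)=0$ and $\psi_{u^i u^j}(0) = D^2 F_y(e_i,e_j)/|\nabla F(y)|$. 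Since $\nabla F(y)\cdot y = F(y)>0$, the tangent plane is complementary to $\mathrm{span}(y)$; therefore rank $n-1$ of $\Hess F$ at $y$ is the same as positive definiteness of its restriction to the tangent plane, which is the same as positive definiteness of $\Hess \psi(0)$.

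For (3)$\Leftrightarrow$(5)$\Leftrightarrow$(4), at the critical point $u=0$ the induced metric on the graph coincides with the flat metric on $\mathbb{R}^{n-1}$ and the second fundamental form with respect to the inward normal $e_n$ equals $\Hess\psi(0)$, so (3) says the second fundamental form is positive definite at every point, which is (5). The Gauss equation for hypersurfaces in Euclidean space gives $K(e_i,e_j)=\kappa_i\kappa_j$ in principal orthonormal directions, so (5)$\Rightarrow$(4) follows from Cauchy--Schwarz for a positive definite second fundamental form. The step I expect to be the main obstacle is the reverse implication (4)$\Rightarrow$(5): positive sectional curvature alone only forces the $\kappa_i$ to share a common sign, since $\kappa_i\kappa_j>0$ is blind to whether that sign is positive or negative. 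This is resolved by invoking the convexity of $B_F[0,1]$ together with the \emph{inward} normal convention: the sphere bulges into the half-space of $e_n$, so the graph function $\psi$ is nonnegative near each of its critical points, hence $\Hess\psi(0)$ is positive semi-definite and the common sign of the principal curvatures is forced to be positive.
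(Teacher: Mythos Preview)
Your proof is correct and uses the same core ingredients as the paper's: Euler's identity for $1$-homogeneous $F$, the local graph representation of the sphere, and the Gauss equation together with convexity to fix the sign of the principal curvatures. The geometric steps (3)$\Leftrightarrow$(5)$\Leftrightarrow$(4) are essentially identical to the paper's, including your handling of the sign ambiguity in (4)$\Rightarrow$(5), which the paper resolves the same way by noting $\kappa_i\ge 0$ from the inward orientation on the boundary of the convex ball.

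The one genuine organizational difference is your treatment of (1)$\Leftrightarrow$(2). You argue this directly from the algebraic identity $g_{ij}=F\,F_{y^iy^j}+F_{y^i}F_{y^j}$, decomposing test vectors along $\mathrm{span}(y)$ and a complement; this is clean and coordinate-free. The paper instead proves (1)$\Leftrightarrow$(3) and (2)$\Leftrightarrow$(3) separately: it fixes adapted Euclidean coordinates $(z^1,\dots,z^n)$ at $y$ with $z^n$ normal to the sphere, differentiates $F^2=r^2$ (rather than $F=r$) to obtain $\partial^2 F^2/\partial z^i\partial z^j(0)=-(\partial F^2/\partial z^n)(0)\,\partial^2\psi/\partial z^i\partial z^j(0)$ for $i,j<n$, and then writes $\Hess(F^2)(0)=2F(0)\Hess F(0)+\rho$ with $\rho$ supported on the $(n,n)$ entry. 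Your route is shorter; the paper's has the advantage that the relation between $\Hess F^2$ and $\Hess\psi$ is displayed explicitly, which it reuses later (Proposition~\ref{g epsilon radialmente convexo}) with $F^2$ replaced by $\zeta_\varepsilon\ast_v F$.
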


\begin{proof}
Let $y \in \mathbb R^n \backslash \{0\}$ and denote $r = F(y)$.
Consider an Euclidean coordinate system $(z^1, \ldots, z^n)$ of $\mathbb R^n$ centered at $y$ such that $S_F[0,r]$ is represented as a graph of a function $\psi:U \subset \mathbb R^{n-1} \rightarrow \mathbb R$ given by
\[
z^n = \psi (z^1, \ldots, z^{n-1} ) = \sum_{i=1}^{n-1} \frac{1}{2}\kappa_i (z^i)^2 + O(\Vert z \Vert^3)
\]
in a neighborhood of $y$, where $\kappa_1, \ldots, \kappa_{n-1}$ are the principal curvatures of $S_F[0,r]$ at $y$ (see \cite{LeeCurvature}).
Notice that $\kappa_i \geq 0$ for every $i=1, \ldots, n-1$ due to the orientation of $S_F[0,r]$.
The matrix of $\Hess \psi(0)$ with respect to the coordinates $(z^1, \ldots, z^{n-1})$ is given by
\[
\left[ \Hess \psi (0)\right]_{(z^1, \ldots, z^{n-1})} = 
\left[ 
\begin{array}{cccc}
\kappa_1 & 0 & \ldots & 0 \\
0 & \kappa_2 & & 0 \\
\vdots & & \ddots & \vdots \\
0 & 0 & \ldots & \kappa_{n-1}
\end{array}
\right].
\]
This settles Item (3) $\Leftrightarrow$ Item (5).

Let $\nu$ be the unit normal vector field of $S_F[0,1]$ pointed towards $B_F[0,1]$. 
The shape operator $A_\nu(y): T_yS_F[0,r] \rightarrow T_yS_F[0,r]$ of the isometric embedding $S_F[0,1]$ $\hookrightarrow \mathbb R^n$ with respect to $\nu$ is also given by 
\[
[A_\nu(y)]_{(z^1, \ldots, z^n)} =  
\left[ 
\begin{array}{cccc}
\kappa_1 & 0 & \ldots & 0 \\
0 & \kappa_2 & & 0 \\
\vdots & & \ddots & \vdots \\
0 & 0 & \ldots & \kappa_{n-1}
\end{array}
\right] 
\]
(see \cite{doCarmo2}).
Gauss equation states that the sectional curvature of a two-dimensional subspace $\chi = \spann \{v,w\}$ of $T_yS_F[0,1]$ is given by
\[
K(\chi) = \frac{\left< A_\nu(y) v,v \right>\left< A_\nu(y) w,w \right> - \left< A_\nu(y) v,w \right>^2}{\left< v,v \right>\left< w, w \right> - \left< v, w \right>^2}.
\]
Observe that $\left< \cdot, \cdot \right>^\prime = \left< A_\nu (y) \cdot, \cdot \right>$ is an inner product iff all the principal curvatures are strictly positive.
In addition it is not difficult to see that $\left< v, v\right>^\prime \left< w, w\right>^\prime - \left< v, w \right>^\prime > 0$ for every $\chi$ iff $\left<  \cdot, \cdot\right>^\prime$ is an inner product.
This settles Item (4) $\Leftrightarrow$ Item (5). 

Now we prove the equivalence among Items (1), (2) and (3).
Consider the coordinate system $(z^1, \ldots, z^n)$ as in the proof of Item (3) $\Leftrightarrow$ Item (5). 
Then $\partial F^2 / \partial z^n(0)$ and $\partial F/ \partial z^n(0)$ are strictly negative.
We have that 
\begin{equation}
\label{esfera local}
F^2(z^1, \ldots, z^{n-1}, \psi(z^1, \ldots, z^{n-1}))=r^2.
\end{equation}
Let $i,j\in \{1,\ldots n-1\}$.
Calculating the derivative of (\ref{esfera local}) with respect to $z^i$ and $z^j$ and evaluating at the origin, we have that

\begin{equation}
\label{F e phi no kernel}
\frac{\partial^2 F^2}{\partial z^j \partial z^i}(0) 
=  - \frac{\partial F^2}{\partial z^n}(0)
\frac{\partial^2 \psi}{\partial z^j\partial z^i}(0).
\end{equation}
We also have that
\begin{equation}
\label{deriva F2 em termos de F}
\frac{\partial^2 F^2}{\partial z^i \partial z^j}(0) 
= 2 F(0)\frac{\partial^2 F}{\partial z^i \partial z^j}(0) \text{ if }(i,j) \neq (n,n)
\end{equation}
and
\begin{equation}
\label{derivada F2 em zn}
\frac{\partial^2 F^2}{\partial (z^n)^2}(0) 
= 2 F(0) \frac{\partial^2 F}{\partial (z^n)^2}(0) + 2 \left( \frac{\partial F}{\partial z^n}\right)^2(0).
\end{equation}
In order to see that Item (2) is equivalent to Item (3), observe that $\Hess \psi (0)$ is positive definite iff $\Hess F^2(0)$ is positive definite when restricted to the tangent space of $S_F[0,r]$ due to (\ref{F e phi no kernel}).
But this last statement holds iff $\Hess F(0)$ has rank $n-1$ due to (\ref{deriva F2 em termos de F}) (Observe that the rank of $\Hess F(0)$ is less than or equal to $n-1$ because $F$ is a norm).
This settles the equivalence between Item (2) and Item (3).

Now we prove the equivalence between Item (1) and Item (3). 
If $F$ is a Minkowski norm, then (\ref{F e phi no kernel}) implies that $\Hess \psi(0)$ is also positive definite, what settles Item (1) $\Rightarrow$ Item (3).
For the inverse implication suppose that $\Hess \psi (0)$ is positive definite.
Let us prove that $F$ is a Minkowski norm.
Equations (\ref{deriva F2 em termos de F}) and (\ref{derivada F2 em zn}) implies that
\[
\text{Hess}(F^2)(0) = 2F(0)\text{Hess} F(0) + \rho,
\]
where the matrix of $\rho$ with respect to $(\partial / \partial z^1, \ldots, \partial / \partial z^n)$ is given by
\[
\rho = 
\left[
\begin{array}{cccc}
0 & \ldots & 0 & 0 \\
\vdots & \ddots &  & \vdots \\
0 &  & 0 & 0 \\
0 & \ldots & 0 & 2 \left(\frac{\partial F}{\partial z^n} (0) \right)^2 \\
\end{array}
\right].
\]
Observe that $\Hess F(0)$ and $\rho$ are positive semidefinite.

If $\xi \not\in \text{span}\{\partial / \partial z^1, \ldots, \partial / \partial z^{n-1}\}$, then it has non-zero component in $\partial / \partial z^n$ and 
\[
\text{Hess}(F^2)(0)(\xi,\xi) > 0
\]
due to $\rho$.

If $\xi \in \text{span}\{\partial / \partial z^1, \ldots, \partial / \partial z^{n-1}\}\backslash 0$, then Equation (\ref{F e phi no kernel}) are in place with
\[
\frac{\partial F^2}{\partial z^n}(0) < 0.
\]
Therefore $\text{Hess}(F^2)(0)(\xi,\xi)>0$, what proves that $F$ is a Minkowski norm.
\end{proof}

\subsection{Finsler geometry}

In this subsection we introduce elements of Finsler geometry.
We present the fundamental tensor, the Cartan tensor, several linear connections and the flag curvature. 
A reference for Finsler geometry is \cite{BaoChernShen}.
For tensor algebra formalism, see \cite{DubrovinFomenkoNovikov1}.
A reference for linear connections on vector bundles is \cite{Darling}.

Let $\mathbb V$ be a real vector space and $\mathcal B=\{e_1, \ldots,$ $e_n\}$ is a basis of $\mathbb V$. 
Let $\mathbb V^\ast$ be the dual vector space of $\mathbb V$ and denote by $\mathcal B^\ast = \{e^1, \ldots, e^n\}$ the dual basis of $\mathcal B$.
A $p$ times contravariant and $q$ times covariant tensor $T$ on $\mathbb V$ is represented by
\[
T^{i_1 \ldots i_p}_{j_1 \ldots j_q}e_{i_1}\otimes \ldots \otimes e_{i_p} \otimes e^{j_1} \otimes \ldots \otimes e^{j_q}
\]
or simply by $T^{i_1 \ldots i_p}_{j_1 \ldots j_q}$.
When the $\mathbb V$ is endowed with a inner product $g=(g_{ij})$, then $(g^{ij})$ represent the inverse tensor of $g$ and the operation of raising and lowering indices are denoted by $\rho^j=g^{ji}\rho_i$ and $\xi_j = g_{ij}\xi^i$ respectively.


Let $(M,F)$ be a Finsler manifold and consider the natural projection $\pi: TM\setminus 0 \rightarrow M$. The pulled-back tangent bundle  
\[
\pi^* TM = \bigcup_{x\in M} \left( \{x\} \times (T_xM\setminus\{0\})\times T_xM  \right)
\]
and the pulled-back cotangent bundle  
\[
\pi^* T^*M = \bigcup_{x\in M} \left( \{x\} \times (T_xM\setminus\{0\})\times T_x^*M  \right)
\]
are vector bundles over the slit tangent bundle $TM\setminus 0$.

Let  $(x^i)$ be a coordinate system of an open subset $U \subset M$ and $((x^i),(y^i))$ be the natural coordinate system on $TU$ induced by $(x^i)$. 
The sections $\left\{ \frac{\partial}{\partial x^i} \right\}$ and $ \{ dx^i\}$ on $\pi^*TU$ and $\pi^*T^*U$ are respectively given by
\begin{eqnarray*}
\frac{\partial}{\partial x^i} : TU\setminus 0 \rightarrow \pi^*TU, & \ & \frac{\partial}{\partial x^i}(x,y) = \left( x, y, \frac{\partial}{\partial x^i}\Big\vert_x \right),\\
d x^i : TU\setminus 0 \rightarrow \pi^*T^*U, & \ & d x^i(x,y) = \left( x, y, d(x^i)_x \right).
\end{eqnarray*}
These sections are defined locally in $x$ and globally in $y$.

\begin{defi} 
\label{define secao distinguida} 
The {\em distinguished section} $\ell$ is a section of $\pi^*TM$ defined by
\begin{equation}
\ell \ = \ \ell_{(x,y)} \ := \ \frac{y^i}{F(x,y)}\frac{\partial}{\partial x^i}\Big\vert_x \ = \ \frac{y^i}{F}\frac{\partial}{\partial x^i} \ =: \ \ell^i \frac{\partial}{\partial x^i}.
\label{define distinguished} 
\end{equation} 
\end{defi}

\begin{defi} 
\label{define tensor fundamental} 
The {\em fundamental tensor} is a section of inner products on $\otimes^2 \pi^\ast T^\ast  M$ given locally by
\[
g_{ij(x,y)} = \frac{1}{2}\left( \frac{\partial^2 F^2}{\partial y^i \partial y^j}\right)(x,y).
\]
\end{defi}

\begin{pro} \label{g ij redimensionamento} Let $F:TM \rightarrow [0,\infty)$ be a Finsler structure. Given $(x,y) \in TM\setminus 0$, we have that
$g_{ij(x,y)}=g_{ij(x,\mu y)}$ for every $\mu > 0$. 
\end{pro}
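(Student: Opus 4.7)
The plan is to derive the claim directly from the positive homogeneity of $F$ by differentiating the scaling identity twice in the $y$ variables. Since $F$ is a Finsler structure, we have $F(x,\mu y) = \mu F(x,y)$ for all $\mu > 0$, so squaring gives the smooth identity
\[
F^2(x,\mu y) = \mu^2 F^2(x,y)
\]
on $TM\setminus 0$. The idea is to view $y \mapsto F^2(x,y)$ as a positively $2$-homogeneous smooth function on $T_xM \setminus \{0\}$ and invoke the fact that partial derivatives reduce the degree of homogeneity by one.

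First I would differentiate the scaling identity with respect to $y^i$. On the left the chain rule yields $\mu \,(\partial_{y^i} F^2)(x,\mu y)$, while on the right we get $\mu^2 \,(\partial_{y^i}F^2)(x,y)$. Cancelling one $\mu$ shows that $\partial_{y^i} F^2$ is positively $1$-homogeneous in $y$, i.e. $(\partial_{y^i}F^2)(x,\mu y) = \mu \,(\partial_{y^i} F^2)(x,y)$. Differentiating this new identity once more with respect to $y^j$ (again using the chain rule on the left) gives
\[
\mu \,\frac{\partial^2 F^2}{\partial y^j \partial y^i}(x,\mu y) = \mu \,\frac{\partial^2 F^2}{\partial y^j \partial y^i}(x,y),
\]
and after cancelling the common factor $\mu > 0$ and multiplying by $\tfrac{1}{2}$ we obtain $g_{ij(x,\mu y)} = g_{ij(x,y)}$, as required.

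There is really no obstacle here: the argument is a straightforward application of Euler's homogeneous function theorem, or rather the elementary observation that differentiation decreases the degree of positive homogeneity by one. Smoothness of $F$ on $TM\setminus 0$ guarantees that the two successive differentiations are justified on all of $T_xM\setminus\{0\}$, which is where the statement needs to hold.
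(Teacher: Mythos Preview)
Your argument is correct and is exactly the standard proof via successive differentiation of the positive-homogeneity identity for $F^2$. The paper itself does not spell this out but simply refers the reader to \cite{BaoChernShen}, where precisely this computation appears.
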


\begin{proof}
See \cite{BaoChernShen}.
\end{proof}

\begin{defi} 
\label{define tensor de cartan} 
Let $(M,F)$ be a Finsler manifold. 
The {\em Cartan tensor} is symmetric section $A$ on $\otimes^3 \pi^\ast T^\ast M$ defined locally by 
\begin{equation}
\label{define componente tensor de cartan}
A_{(x,y)} = A_{ijk(x,y)} dx^i \otimes dx^j \otimes dx^k
:= \frac{1}{2} .F. \frac{\partial g_{ij}}{\partial y^k} dx^i \otimes dx^j \otimes dx^k.
\end{equation}
\end{defi}
In the literature, the tensor field defined locally as
\begin{equation}
\label{unormalized Cartan tensor}
C_{ijk}=\frac{1}{2}\frac{\partial g_{ij}}{\partial y^k}
\end{equation}
is also called the {\em Cartan tensor}.

The {\em formal Christoffel symbols} of the second kind are defined as 
\begin{equation} 
\gamma^i{}_{jk} := g^{is}\frac{1}{2}\left(\frac{\partial g_{sj}}{\partial x^k}-\frac{\partial g_{jk}}{\partial x^s}+\frac{\partial g_{ks}}{\partial x^j}  \right).
\label{definicao formal Christoffel simbols}
\end{equation}

The {\em nonlinear connection} is given by
\begin{equation}
N^i{}_j := \gamma^i{}_{jk} y^k- C^i{}_{jk}\gamma^k{}_{rs}y^ry^s,
\label{definicao non linear connection}
\end{equation}
where $C^i{}_{jk} = g^{is}C_{sjk}$. 

It is usual to consider the basis $\left\{ \frac{\delta}{\delta x^i}, F\frac{\partial}{\partial y^i}\right\}$ and $\left\{ dx^i, \frac{\delta y^i}{F}\right\}$ on $TU\setminus 0$ and $T^\ast U$ respectively, where
\begin{equation}
\label{define delta delta}
\frac{\delta}{\delta x^j} := \frac{\partial}{\partial x^j}-N^i{}_j \frac{\partial}{\partial y^i} 
\end{equation}
and 
\[
\delta y^i:= dy^i+N^i{}_j dx^j.
\]

The Chern connection is characterized by the following theorem.

\begin{teo} 
\label{teorema conexao de Chern}
Let $(M,F)$ be a Finsler manifold. The pulled-back bundle  $\pi^*TM$ admits a unique linear connection $\nabla$, called the {\em  Chern connection}, such that its connection forms $\omega_j^{\ i}$, defined by
\[
\nabla_X \frac{\partial}{\partial x^j} = \omega_j{}^i(X) \frac{\partial}{\partial x^i},
\]
are locally characterized by the following structural equations:
\begin{itemize}
\item {\bf Torsion freeness:}
\[
d(dx^i)-dx^j\wedge \omega_j{}^i = - dx^j \wedge \omega_j{}^i=0.
\]
\item {\bf Almost $g$-compatibility:} 
\[
dg_{ij}- g_{kj} \omega_i{}^k-g_{ik}\omega_j{}^k = 2C_{ijs}\delta y^s. 
\]
\end{itemize}
\end{teo}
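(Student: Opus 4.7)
The plan is to establish uniqueness and existence simultaneously by deriving an explicit formula for the connection coefficients in the canonical coframe $\{dx^k,\, \delta y^k/F\}$ of $T^\ast(TU\setminus 0)$. This coframe is well defined independently of the Chern connection because the nonlinear connection $N^i{}_j$ is determined by $F$ alone through the formal Christoffel symbols in (\ref{definicao non linear connection}). I would write the connection forms as
\[
\omega_j{}^i = \Gamma^i{}_{jk}\, dx^k + E^i{}_{jk}\,\frac{\delta y^k}{F}
\]
for unknown functions $\Gamma^i{}_{jk}$ and $E^i{}_{jk}$ on $TU\setminus 0$ and reduce both structural equations to algebraic conditions on these functions.

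For uniqueness, I would first expand $-dx^j\wedge \omega_j{}^i = 0$ and invoke the linear independence of $\{dx^j\wedge dx^k\}_{j<k}$ together with $\{dx^j\wedge \delta y^k\}_{j,k}$ in $\Lambda^2 T^\ast(TU\setminus 0)$; this forces $E^i{}_{jk}=0$ and the symmetry $\Gamma^i{}_{jk}=\Gamma^i{}_{kj}$. Next, decompose $dg_{ij} = (\delta g_{ij}/\delta x^k)\, dx^k + (\partial g_{ij}/\partial y^k)\,\delta y^k$ and observe that by the very definition of the Cartan tensor one has $\partial g_{ij}/\partial y^k = 2C_{ijk}$, so the $\delta y^k$-component of the almost $g$-compatibility equation is tautologically satisfied by the right-hand side $2C_{ijs}\delta y^s$. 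The $dx^k$-component then reduces to
\[
\frac{\delta g_{ij}}{\delta x^k} = g_{rj}\Gamma^r{}_{ik} + g_{ir}\Gamma^r{}_{jk},
\]
a linear system of Levi--Civita shape. The classical Koszul cyclic-permutation trick on the indices $(i,j,k)$, combined with the symmetry $\Gamma^r{}_{ij}=\Gamma^r{}_{ji}$, gives the unique solution
\[
\Gamma^r{}_{jk} = \tfrac{1}{2}\, g^{rs}\left(\frac{\delta g_{sj}}{\delta x^k} + \frac{\delta g_{sk}}{\delta x^j} - \frac{\delta g_{jk}}{\delta x^s}\right).
\]

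For existence, I would define $\omega_j{}^i := \Gamma^i{}_{jk}\, dx^k$ on each chart using the formula above and verify both structural equations directly, which amounts to reversing the Koszul computation and reapplying $\partial g_{ij}/\partial y^k = 2C_{ijk}$. Independence of the chart follows from uniqueness: on every overlap, both local expressions satisfy the same two structural equations, hence coincide, so the local forms patch into a globally defined linear connection on $\pi^\ast TM$. The principal technical point — and the reason the theorem is not immediate — is the clean splitting of almost $g$-compatibility along the canonical coframe $\{dx^k,\delta y^k/F\}$, with the $\delta y^k$ half collapsing precisely to the Cartan tensor identity. This is what forces the right-hand side to be $2C_{ijs}\delta y^s$: any other choice would yield an overdetermined, inconsistent system, while this specific choice leaves exactly the $dx^k$-component to be solved, which is a finite-dimensional linear problem of Levi--Civita type.
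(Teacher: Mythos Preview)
Your argument is correct and is precisely the standard derivation of the Chern connection: reduce torsion freeness to $E^i{}_{jk}=0$ and $\Gamma^i{}_{jk}=\Gamma^i{}_{kj}$, then split almost $g$-compatibility along $\{dx^k,\delta y^k\}$ so that the vertical part is the Cartan identity and the horizontal part yields the Koszul-type formula for $\Gamma^i{}_{jk}$. The paper does not give its own proof of this statement at all; it simply refers the reader to \cite{BaoChernShen}, where the argument you outline appears in essentially the same form.
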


\begin{proof}
See \cite{BaoChernShen}.
\end{proof}

The torsion freeness is equivalent to the absence of $dy^k$ in  $\omega_j^{\ i},$ that is,
\[
\omega_j{}^i = \Gamma^i{}_{jk}dx^k,
\]
together with the symmetry
\[
\Gamma^i{}_{jk}=\Gamma^i{}_{kj}.
\]
The almost metric-compatibility  implies that
\begin{equation}
\Gamma^l{}_{jk}= \gamma^l{}_{jk}-g^{li}\left( C_{ijs}N^s{}_k- C_{jks}N^s{}_i+ C_{kis}N^s{}_j \right),
\label{Gamma almost metric compatibility}
\end{equation}
or equivalently
\[
\Gamma^i{}_{jk}= \frac{g^{is}}{2}\left(\frac{\delta g_{sj}}{\delta x^k}-\frac{\delta g_{jk}}{\delta x^s}+\frac{\delta g_{ks}}{\delta x^j} \right).
\]

Denote 
\[
\dot A = \nabla_{\ell^i \frac{\delta}{\delta x^i}}A,
\] 
where $A$ is the Cartan tensor. Then
\begin{equation}
\label{define a dot}
\dot A^i{}_{jk} = \frac{1}{2}(\sigma^i)_{y^jy^k} - \Gamma^i{}_{jk}
\end{equation}
where
\begin{equation}
\label{definicao Gi}
\sigma^i = \gamma^i{}_{jk}y^j y^k
\end{equation}
(see \cite{BaoChernShen}).

The curvature 2-forms of the Chern connection are defined by
\[
\Omega_j{}^i:=  d\omega_j{}^i - \omega_j{}^k\wedge \omega_k{}^i. 
\]
Writing the $2$-forms $\Omega_j^{\ i}$ in terms of the basis  $\left\{dx^k,\frac{\delta y^k}{F}\right\},$ we have that
\[
\Omega_j{}^i = \frac{1}{2}R_j{}^i{}_{kl} dx^k\wedge dx^l+P_j{}^i{}_{kl} dx^k \wedge \frac{\delta y^l}{F}+\frac{1}{2}Q_j{}^i{}_{kl} \frac{\delta y^k}{F} \wedge \frac{\delta y^l}{F}.
\]
The objects $R,P$ and $Q$ are respectively the $hh$-, $hv$-, $vv$-curvature tensors of the Chern connection and they can be chosen as
\begin{eqnarray}
R_j{}^i{}_{kl} &=& \frac{\delta \Gamma^i{}_{jl}}{\delta x^k}-\frac{\delta \Gamma^i{}_{jk}}{\delta x^l}+ \Gamma^i{}_{hk}\Gamma^h{}_{jl}-\Gamma^i{}_{hl}\Gamma^h{}_{jk}, \label{Rjikl hh}\\
P_j{}^i{}_{kl} &=& -  F \frac{\partial \Gamma^i{}_{jk}}{\partial y^l}, \nonumber \\
Q_j{}^i{}_{lk} & = & 0. \nonumber
\end{eqnarray}
Notice that
\[
R_j{}^i{}_{kl} = - R_j{}^i{}_{lk}.
\]

Now we define the flag curvature. 
A {\em flag} at $x \in M$ consists of a nonzero vector $y \in T_xM,$ which is called the {\em flagpole}, and a vector $z:=z^i\frac{\partial}{\partial x^i} \in T_xM$ which is transversal to $y$.  
The flag curvature is the correspondence which associates to each $x \in M$ and each flag $(y,z) $ in $T_xM$  the number
\[
K(y,z):=\frac{z^i(y^jR_{jikl}y^l)z^k}{g(y,y)g(z,z)-[g(y,z)]^2},
\]
where 
\begin{equation}
R_{jikl} = g_{im}R_j{}^m{}_{kl}.
\label{Rjikl abaixa}
\end{equation}
$K(y,z)$ depends only on $y$ and $\spann\{y,z\}$.

We end this section defining other connections:

\begin{defi}
\label{Cartan Hasiguchi Berwarld} The connection forms of the following connections on $\pi^\ast TM$ are defined by:
\begin{enumerate}
\item Cartan connection: $\omega_j{}^i + C^i{}_{jk}.\delta y^k$;
\item Hashiguchi connection: $\omega_j{}^i + C^i{}_{jk}.\delta y^k + \dot A^i{}_{jk}.dx^k$;
\item Berwald connection: $\omega_j{}^i + \dot A^i{}_{jk}.dx^k$.
\end{enumerate}
\end{defi}

\begin{obs}
The flag curvature for the Chern connection, Cartan connection, Hashiguchi connection and Berwald connection are the same.
\end{obs}


\section{Mollifier smoothing of convex functions}
\label{secao mollifier smoothing}

In this section we study the mollifier smoothing of convex functions $f:\mathbb R^n \rightarrow \mathbb R$.

\begin{lem} 
\label{suavizacao mollifier e convexa}  
Let $f:{\mathbb R}^n\rightarrow {\mathbb R}$ be a convex function. Then the mollifier smoothing
\[
(\eta_{\varepsilon}*f)(x) = \int_{{\mathbb R}^n} \eta_\varepsilon(x-y)f(y)dy = \int_{{\mathbb R}^n} \eta_\varepsilon(y)f(x-y)dy
\]
is also  a convex function.
\end{lem}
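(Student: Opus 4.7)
The plan is to verify convexity directly from the definition, exploiting the second form of the convolution
\[
(\eta_\varepsilon * f)(x) = \int_{\mathbb{R}^n} \eta_\varepsilon(y) f(x-y)\, dy,
\]
in which the argument of $f$ depends linearly on $x$ while the weight $\eta_\varepsilon(y)$ does not involve $x$ at all. This decoupling is exactly what makes the argument transparent.

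First, I would fix $x_1, x_2 \in \mathbb{R}^n$ and $\lambda \in [0,1]$. For each fixed $y \in \mathbb{R}^n$, the convexity of $f$ applied to the two points $x_1 - y$ and $x_2 - y$ gives
\[
f\bigl(\lambda(x_1-y) + (1-\lambda)(x_2-y)\bigr) \leq \lambda f(x_1-y) + (1-\lambda) f(x_2-y).
\]
Next, since $\eta_\varepsilon(y) \geq 0$ everywhere (this is the crucial property of the standard mollifier), I would multiply both sides by $\eta_\varepsilon(y)$ without flipping the inequality, and integrate over $\mathbb{R}^n$. The left-hand side becomes $(\eta_\varepsilon * f)(\lambda x_1 + (1-\lambda) x_2)$ by the linearity of the translation inside $f$, and the right-hand side splits by linearity of the integral into $\lambda (\eta_\varepsilon * f)(x_1) + (1-\lambda)(\eta_\varepsilon * f)(x_2)$.

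There is essentially no obstacle; the only items worth mentioning are that the integrals are finite (this follows because $f$ is continuous, as every convex function on $\mathbb{R}^n$ is, and $\eta_\varepsilon$ has compact support, so all the integrands are continuous functions with compact support), and that the non-negativity of $\eta_\varepsilon$ is what prevents the inequality from reversing. The whole argument is a few lines of computation with no technical delicacy.
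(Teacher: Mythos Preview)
Your proof is correct and is precisely the straightforward argument the paper has in mind; the paper's own proof consists of the single line ``It is straightforward from the convexity of $f$,'' and what you have written is the natural unpacking of that sentence.
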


\begin{proof}
It is straightforward from the convexity of $f$.
\end{proof}

\begin{obs}
\label{formato eta}
The bell shape of the graph of $\eta: \mathbb R \rightarrow \mathbb R$ is described below:
\begin{itemize}
\item $\eta$ is identically zero in $\mathbb R \backslash (-1,1)$ and it is strictly positive in $(-1,1)$;
\item Its derivative is strictly positive in $(-1,0)$ and it is strictly negative in $(0,1)$;
\item Its second derivative is strictly positive in $\left(-1,-\sqrt[4]{\frac{1}{3}}\right) \cup\left(\sqrt[4]{\frac{1}{3}},1\right)$ and strictly negative in $\left(-\sqrt[4]{\frac{1}{3}},\sqrt[4]{\frac{1}{3}}\right)$.
\end{itemize}

It is straightforward that $\eta_\varepsilon$ has the same qualitative behavior as $\eta$.
\end{obs}

The next lemma is important in order to guarantee that the Hessian $\left( \frac{\partial^2 F^2_\varepsilon}{\partial y^i \partial y^j} \right)$ is positive definite.


\begin{lem} 
\label{condicao para curvatura} 
If $f:{\mathbb R} \rightarrow {\mathbb R}$ be a convex function, then $(\eta_{\varepsilon}*f){''}(x)\geq 0$ for every $x \in \mathbb R$. Moreover $(\eta_{\varepsilon}*f)''(x)=0$ if and only if $f$ is affine in $(x-\varepsilon,x+\varepsilon).$ 
\end{lem}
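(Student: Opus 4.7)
The first assertion is essentially free: by Lemma \ref{suavizacao mollifier e convexa} the convolution $\eta_\varepsilon\ast f$ is convex, and since $\eta_\varepsilon\in C^\infty$ the convolution is $C^\infty$ as well, so its second derivative is non-negative pointwise.

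For the characterization of the equality case, I plan to establish the identity
\[
(\eta_\varepsilon\ast f)''(x)\;=\;\int \eta_\varepsilon(x-y)\,d\mu(y),
\]
where $\mu$ is the non-negative Borel measure on $\mathbb{R}$ obtained as the distributional derivative of $f'$. The measure $\mu$ exists and is non-negative because convexity forces $f$ to be locally Lipschitz (so $f'$ is defined almost everywhere and locally bounded) and forces $f'$ to be monotone non-decreasing, hence of bounded variation with $df'\ge 0$. To derive the identity, I start from $(\eta_\varepsilon\ast f)''(x)=\int \eta_\varepsilon''(x-y)\,f(y)\,dy$, integrate by parts in $y$ using absolute continuity of $f$ to get $\int \eta_\varepsilon'(x-y)\,f'(y)\,dy$, and integrate by parts a second time in the Riemann--Stieltjes sense using $f'\in\mathrm{BV}_{\mathrm{loc}}$. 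All boundary terms vanish because $\eta_\varepsilon$ and $\eta_\varepsilon'$ are compactly supported.

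Once the identity is in place, the conclusion is immediate. By Remark \ref{formato eta}, $\eta_\varepsilon(x-y)>0$ precisely on $y\in(x-\varepsilon,x+\varepsilon)$ and vanishes outside, while $\mu\ge 0$; hence the integral equals zero iff $\mu\bigl((x-\varepsilon,x+\varepsilon)\bigr)=0$, iff $f'$ is constant on that open interval, iff $f$ is affine there. The only technical obstacle is the second (Stieltjes) integration by parts, since $f'$ may be discontinuous at corners of $f$; a clean way to bypass BV theory is to verify the identity first for $C^2$ convex $f$ (where $d\mu=f''(y)\,dy$ and the computation reduces to a single elementary integration by parts), and then approximate a general convex $f$ by the smooth convex functions $f_k:=\eta_{1/k}\ast f$, which converge locally uniformly to $f$. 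Passing to the limit inside $\int\eta_\varepsilon''(x-y)f_k(y)\,dy$ is justified by dominated convergence (since $\eta_\varepsilon''$ is bounded and compactly supported), and the equivalence of $(\eta_\varepsilon\ast f)''(x)=0$ with affineness of $f$ on $(x-\varepsilon,x+\varepsilon)$ follows.
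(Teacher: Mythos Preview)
Your argument is correct, but it takes a genuinely different route from the paper. The paper never introduces the measure $\mu=f''$ in the distributional sense. Instead it subtracts a supporting line at $x$ to obtain $h=f-(ay+b)\ge 0$ with $h(x)=0$, writes $(\eta_\varepsilon\ast f)''(x)=\int\eta_\varepsilon''(x-y)h(y)\,dy$, and then splits this integral into four pieces according to the sign of $\eta_\varepsilon''$ (using the inflection points from Remark~\ref{formato eta}). Monotonicity of $h$ on each side of $x$ lets one bound each piece from below by an expression involving $h(x\pm\iota)$, and the four bounds telescope to zero; when $f$ is not affine on $(x-\varepsilon,x+\varepsilon)$ at least one of the outer bounds is strict. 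This is entirely elementary---no BV or Stieltjes machinery---at the price of being tied to the specific shape of $\eta$. Your measure-theoretic identity $(\eta_\varepsilon\ast f)''(x)=\int\eta_\varepsilon(x-y)\,d\mu(y)$ is cleaner, conceptually sharper, and would work for any nonnegative mollifier with support exactly $[-\varepsilon,\varepsilon]$, not just the standard one.

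One small caveat: your ``bypass'' of BV theory via approximation by $f_k=\eta_{1/k}\ast f$ does not quite close. You pass to the limit on the left side $\int\eta_\varepsilon''(x-y)f_k(y)\,dy\to(\eta_\varepsilon\ast f)''(x)$, but to extract the equivalence you still need the right side $\int\eta_\varepsilon(x-y)f_k''(y)\,dy$ to converge to $\int\eta_\varepsilon(x-y)\,d\mu(y)$, which is precisely the weak convergence $f_k''\,dy\rightharpoonup d\mu$ you were trying to avoid. This is harmless since your primary route (two integrations by parts, the second in the Stieltjes sense against the nondecreasing $f'$) is sound; just drop the claim that the approximation avoids BV theory.
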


\begin{proof}
Lemma \ref{suavizacao mollifier e convexa}  assures that $(\eta_{\varepsilon}*f)$ is convex and $(\eta_{\varepsilon}*f){''}(x)\geq 0.$ 
It remains to prove that $(\eta_{\varepsilon}*f)''(x)=0$ if and only if $f$ is affine in $(x-\varepsilon,x+\varepsilon)$.
 
Let $y\mapsto (y,ay+b)$ be a parameterized tangent  line of the graph of $f$ at $x$ (which may not be unique). 
Define $h(y) = f(y) - ay - b$. Then $h$ is a non-negative convex function that attains an absolute minimum at $h(x) = 0$.
The restriction $h|_{(-\infty,x)}$ is non increasing and $h|_{(x,\infty)}$ is non decreasing.
Moreover, if $h(z)>0$ and $z>x,$ then $h|_{(z,\infty)}$ is strictly increasing. Similarly $h$ is strictly decreasing in $(-\infty,z)$ if $h(z)>0$ and $z<x$.

Notice that 
\[
(\eta_{\varepsilon}*f)'' (x) = \int_{{\mathbb R}}\eta''_{\varepsilon}(x-y)[f(y)-ay-b]dy \nonumber
\]
because  $\int_{{\mathbb R}}\eta''_{\varepsilon}(x-y)[-ay-b]dy=0$. In particular if $f$ is affine in $(x-\varepsilon,x+\varepsilon)$, then $f(y)=ay+b$ for every $y\in (x-\varepsilon,x+\varepsilon)$ and $(\eta_{\varepsilon}*f)'' (x)=0.$

Now suppose that $f$ is not affine in $(x-\varepsilon,x+\varepsilon)$. 
Let $-\iota$ and $ \iota$ be the inflection points of $\eta_\varepsilon$.
We write $(\eta_{\varepsilon}*f)'' (x)$ as the sum of
\begin{eqnarray*}
C_1 & = & \int_{x-\varepsilon}^{x-\iota}\eta''_\varepsilon(x-y)h(y)dy, \\
C_2 & = & \int_{x-\iota}^{x}\eta''_\varepsilon(x-y)h(y)dy, \\
C_3 & = & \int_{x}^{x+\iota}\eta''_\varepsilon(x-y)h(y)dy
\end{eqnarray*}
and
\begin{eqnarray*}
C_4 & = & \int_{x+\iota}^{x+\varepsilon}\eta''_\varepsilon(x-y)h(y)dy.
\end{eqnarray*}
As $h(x-\iota)\leq h(y)$ and  $\eta_\varepsilon''(x-y)\geq 0$ for every $ y\in[x-\varepsilon,x-\iota]$, then
\begin{equation} 
\label{C_1}
C_1 \geq \int_{x-\varepsilon}^{x-\iota}\eta''_\varepsilon(x-y)h(x-\iota)dy 
= h(x-\iota) \int_{x-\varepsilon}^{x-\iota}\eta''_\varepsilon(x-y)dy.
\end{equation}
Following the same reasoning, we have that $h(x-\iota)\geq h(y)$ and $\eta_\varepsilon''(x-y)\leq 0$ for every $ y\in[x-\iota,x]$ and
\[
C_2 \geq \int_{x-\iota}^{x}\eta''_\varepsilon(x-y)h(x-\iota)dy = h(x-\iota)\int_{x-\iota}^{x}\eta''_\varepsilon(x-y)dy.
\]
Analogously we obtain 
\[
C_3 \geq \int_{x}^{x+\iota}\eta''_\varepsilon(x-y) h(x+\iota)dy = h(x+\iota) \int_{x}^{x+\iota}\eta''_\varepsilon(x-y)dy
\]
and
\begin{equation} \label{C_4}
C_4 \geq \int_{x+\iota}^{x+\varepsilon}\eta''_\varepsilon(x-y)h(x+\iota)dy = h(x+\iota) \int_{x+\iota}^{x+\varepsilon}\eta''_\varepsilon(x-y)dy.
\end{equation}
As $f$ is not affine in $(x-\varepsilon,x+\varepsilon)$, then the inequality (\ref{C_1}) or (\ref{C_4}) is strict. Therefore 
\begin{eqnarray*}
 (\eta_{\varepsilon}*f)''(x)  & =& C_1+C_2+C_3+C_4  \\
 & >&  h(x-\iota) \int_{x-\varepsilon}^{x}\eta''_\varepsilon(x-y)dy + h(x+\iota) \int_{x}^{x+\varepsilon}\eta''_\varepsilon(x-y)dy \\
   & =& [h(x-\iota)+ h(x+\iota)]\int_{x-\varepsilon}^{x}\eta''_\varepsilon(x-y)dy \\
  & = & [h(x-\iota)+ h(x+\iota)] [-\eta'_\varepsilon(x-y)]\vert_{y=x-\varepsilon}^{y=x} \\
   & = & 0,
\end{eqnarray*}
what settles the lemma.
\end{proof}


\section{Growth rates of asymmetric norms on ${\mathbb R^n}$}

\label{section growth rates}

Let $F$ be an arbitrary asymmetric norm on ${\mathbb R^n}$ and $\|\cdot\|$ be its canonical norm. 
We use notations like $B_{\|\cdot\|}(y, r)$ and $B_F(y, r)$ in order to denote the open ball with center $y$ and radius $r$ with respect to $\|\cdot\| $ and $F$ respectively. 
The sphere and the closed ball with respect to $F$ will be denoted by $S_F[y, r]$ and $B_F[y, r]$ respectively and so on. 

An asymmetric norm $F$ in ${\mathbb R^n}$ is completely characterized by $S_F[0,1],$ which is the boundary of the convex subset $B_F[0,1].$ 
For every  $y \in S_F[0,1]$ there exist a supporting hyperplane $H_y$ of $B_F[0,1]$ at $y.$ 
In general $H_y$ is not unique. For a fixed $H_y,$ there exist a unique linear function  $L_y$ such that $L_y=F(y)=1$ on $H_y.$  
We are interested to find a positive lower bound for the growth rate of $F$ in almost radial directions. 
In order to do that, we use the relationship $F \geq L_y.$

Define
\begin{equation} 
\label{constante r}
r_M  =  \max_{y \in S_{\|\cdot\|}[0,1]} F(y) = \max_{y \in B_{\|\cdot\|}[0,1]} F(y)
\end{equation}
and
\begin{equation} 
\label{constante rho}
r_m  =  \min_{y \in S_{\|\cdot\|}[0,1]} F(y).
\end{equation}
Then
\begin{equation} 
\label{identidade entre max e min}
\max_{y \in S_{\|\cdot\|}[0,1]} F(y) = \min_{y \in S_{\|\cdot\|}\left[ 0,\frac{r_M}{r_m} \right]} F(y).
\end{equation}

We know that the directional derivative of $F$ at $v$ in the radial direction $\frac{v}{\|v\|}$ is $\frac{F(v)}{\|v\|},$ which can be bounded below by $r_m$ due to (\ref{constante rho}). The next lemma is a version of this result for the  ``derivative'' of $F$ with respect to $\frac{v}{\|v\|}$  in a neighborhood of $v.$ Of course $F$ is not necessarily differentiable, and we overcome this shortcoming using differences
instead of derivatives.

\begin{lem} \label{proposicao F em Rn}
Let $F$ be an asymmetric norm on ${\mathbb R^n}$ and $v \in {\mathbb R^n}$ be a nonzero vector. Set
\[
\varepsilon \in \left(0,\frac{r_m \Vert v\Vert}{4nr_M}\right],
\]
where $r_M$ and $r_m$ are defined by (\ref{constante r}) and (\ref{constante rho}) respectively. Then
\[
F\left(w+t\frac{v}{\|v\|}\right)-F(w) > \frac{r_m}{8}t
\]
for every $w\in B_{\|\cdot\|}[v,\varepsilon]$ and $t>0$.
\end{lem}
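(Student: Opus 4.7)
My plan is to use, for each $w\in B_{\|\cdot\|}[v,\varepsilon]$, a supporting linear functional of the convex body $B_F[0,1]$ at its boundary point $w/F(w)$, and then exploit the linear lower bound for $F$ that this functional provides. Concretely, I would first check that the hypothesis $\varepsilon\leq r_m\|v\|/(4n r_M)$, together with $r_m\leq r_M$ and $n\geq 1$, forces $\varepsilon<\|v\|/4<\|v\|$, so every $w\in B_{\|\cdot\|}[v,\varepsilon]$ is nonzero and $w/F(w)\in S_F[0,1]=\partial B_F[0,1]$ is well-defined. Applying the Hahn--Banach theorem to the convex body $B_F[0,1]$ at this boundary point and using the positive homogeneity of $F$, I would obtain a linear functional $L:\mathbb R^n\to\mathbb R$ with $L\leq F$ everywhere and $L(w)=F(w)$.

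The core step I anticipate is bounding $L(v/\|v\|)$ from below. From $L\leq F$ and linearity I would derive the two-sided estimate $|L(u)|\leq r_M\|u\|$ for every $u\in \mathbb R^n$, since $L(u)\leq F(u)\leq r_M\|u\|$ and $-L(u)=L(-u)\leq F(-u)\leq r_M\|u\|$. Writing $L(v)=L(w)+L(v-w)=F(w)+L(v-w)$ and combining this with the triangle-type bounds
\[
F(w)\geq F(v)-F(v-w)\geq r_m\|v\|-r_M\varepsilon, \qquad |L(v-w)|\leq r_M\varepsilon,
\]
I would get $L(v)\geq r_m\|v\|-2r_M\varepsilon$. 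Dividing by $\|v\|$ and substituting $\varepsilon\leq r_m\|v\|/(4n r_M)$ would yield
\[
L\!\left(\tfrac{v}{\|v\|}\right)\geq r_m-\frac{2r_M\varepsilon}{\|v\|}\geq r_m-\frac{r_m}{2n}\geq \frac{r_m}{2}.
\]

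The conclusion would then follow at once: for any $t>0$, the inequality $L\leq F$ together with $L(w)=F(w)$ gives
\[
F\!\left(w+t\tfrac{v}{\|v\|}\right)-F(w)\geq L\!\left(w+t\tfrac{v}{\|v\|}\right)-F(w)=t\,L\!\left(\tfrac{v}{\|v\|}\right)\geq \tfrac{r_m}{2}\,t>\tfrac{r_m}{8}\,t.
\]
The only real obstacle I foresee is the Hahn--Banach step producing $L$, which is a textbook application for finite-dimensional convex bodies with $0$ in their interior (and $0$ is interior to $B_F[0,1]$ since $F\leq r_M\|\cdot\|$); everything else is bookkeeping with the triangle inequality. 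Note that this argument actually delivers $(r_m/2)t$ rather than $(r_m/8)t$, so the statement carries a generous safety margin and the factor $n$ in the hypothesis is not essential for my route.
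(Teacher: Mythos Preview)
Your proof is correct and uses the same core idea as the paper---a supporting linear functional $L$ of the convex body $B_F[0,1]$ at $w/F(w)$, satisfying $L\le F$ and $L(w)=F(w)$---but your estimate of $L(v/\|v\|)$ is considerably more direct. The paper first rotates so that $v$ lies along the $n$-th axis, then argues geometrically that the supporting hyperplane $H_w$ cannot meet a certain disk $D$ in the coordinate hyperplane $\{y^n=0\}$; this yields bounds $|a^i|\le 2r_M/(r_m\|v\|)$ on the first $n-1$ coefficients of the equation $a^1y^1+\cdots+a^ny^n=1$ of $H_w$, and combining these with $w\in H_w$ (using the factor $n$ in the hypothesis to control the sum $\sum_{i<n}|a^iw^i|$) gives $a^n>1/(4\|v\|)$ and hence the directional derivative $a^nF(w)>r_m/8$. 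Your route bypasses all of this coordinate work: writing $L(v)=L(w)+L(v-w)=F(w)+L(v-w)$ and bounding each term by the triangle inequality and $|L|\le r_M\|\cdot\|$ gives $L(v/\|v\|)\ge r_m-2r_M\varepsilon/\|v\|\ge r_m/2$ in one line. You obtain the sharper constant $r_m/2$ in place of $r_m/8$, and as you note, the factor $n$ in the hypothesis is never actually needed for your argument.
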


{\it Proof. }
Without loss of generality, we can suppose that $v=(0,\ldots,0, \Vert v\Vert).$ The general case can be reduced to this case after an orthogonal change of coordinates.

Denote $\tilde{\mathbb R}^{n-1} = \{(y^1, \ldots, y^n) \in \mathbb R^n;y^n=0\}$ and let $\tilde \pi: \mathbb R^n \rightarrow \tilde{\mathbb R}^{n-1}$ be the orthogonal projection.
Define
\[ 
D=\tilde{\mathbb R}^{n-1}\cap B_{\|\cdot\|}\left[0,\frac{r_m \Vert v\Vert}{2r_M}\right]
\]
and $E=B_{\|\cdot\|}[v,\varepsilon]$. 
Observe that $\tilde \pi(E) \subset D$.

Note that
\begin{equation} 
\max\limits_{y\in D} F(y) 
\leq \frac{r_m \Vert v\Vert}{2} 
 = \min\limits_{y \in S_{\|\cdot\|} \left[ 0, \frac{\Vert v\Vert}{2} \right] }F(y)  <  \min\limits_{y\in E}F(y), \label{desigualdades}
\end{equation}
where the equalities are due to (\ref{identidade entre max e min}) and the last inequality holds because $E\cap B_{\|\cdot\|} \left[ 0,\frac{\Vert v\Vert }{2} \right]=\emptyset$.

Let $w=(w^1,\ldots,w^n) \in E.$ Then
\begin{equation} 
\label{vetor w}
|w^i|<\varepsilon \text{ for } i=1,\ldots,n-1 \text{ and } w^n \in \left( \frac{\Vert v\Vert}{2},2 \Vert v\Vert \right). 
\end{equation}
A supporting hyperplane $H_w$ of $S_F[0,F(w)]$ at $w$ separates ${\mathbb R^n}$ in two parts and $B_F(0,F(w))$ lies completely in the same part of the origin. 
Moreover $D \subset B_F(0,F(w))$ due to (\ref{desigualdades}). 
This implies that $H_w$ does not intercept $D$. 
In particular $(0,\ldots,0,1)$ is not parallel to  $H_w$ because $\tilde\pi(w) \in D$. Therefore $H_w$ intercepts the $y^n$-axis. This intersection is in the positive part of $y^n$-axis otherwise the line connecting $w$ and this intersection would intercept $D$ as well.

Denote the equation of $H_w$ by
\[
a^1y^1+ \cdots + a^ny^n=1.
\]
Considering the intersection of $H_w$ with the $y^1, \ldots, y^{n-1}$ axes, we have that
\begin{equation} 
\label{a^i}
|a^i|\leq \frac{2r_M}{r_m \Vert v\Vert} \ \ {\mbox{for}} \ \ i=1,\ldots,n-1
\end{equation}
because $H_w$ does not intercept $D.$ In order to estimate $a^n$ notice that
\[
a^n=\frac{1-a^1w^1-\cdots-a^{n-1}w^{n-1}}{w^n} \geq \frac{1-|a^1w^1|-\cdots-|a^{n-1}w^{n-1}|}{w^n},
\]
where $|a^iw^i|\leq \frac{1}{2n}$ due to (\ref{vetor w}) and (\ref{a^i}). Therefore
\begin{equation} 
\label{a^n}
a^n > \frac{1}{4\Vert v\Vert}
\end{equation}
because $w^n < 2\Vert v\Vert$.

The linear function $L_w$ such that $L_w=F(w)$ on $H_w$ is given by
\[
L_w(y)=F(w)(a^1y^1+a^2y^2+\cdots+ a^ny^n).
\]
The directional derivative of $L_w$ with respect to $v/\|v\|=(0,\ldots,0,1)$ satisfies
\begin{equation} 
\label{derivada da L_w}
\langle\nabla L_w(y),(0,\ldots,0,1)\rangle = a^nF(w)> \frac{1}{4 \Vert v\Vert}F(w) > \frac{r_m}{8}
\end{equation} 
due to (\ref{desigualdades}) and (\ref{a^n}).
Finally the inequality $F \geq L_w$ together with (\ref{derivada da L_w}) implies that
\begin{eqnarray*}
F\left(w+t\frac{v}{\|v\|}\right)-F(w) & \geq & L_w\left(w+t\frac{v}{\|v\|}\right)-L_w(w) \\ 
= t \langle\nabla L_w(y),(0,\ldots,0,1)\rangle & 
> & \frac{r_m}{8}t.\square
\end{eqnarray*}

\section{Vertical Smoothing of $C^0$-Finsler Structures}
\label{section vertical smoothing}

Now we will define the vertical smoothing of a $C^0$-Finsler structure. This process will be done in a coordinate system.

Let $(M,F)$ be a $C^0$-Finsler manifold and consider a coordinate system $(x^1,\ldots,x^n)$ on an open subset $U_0$ of $M.$ Choose an open subset $U$ of $U_0$ with compact closure such that
\begin{equation} 
\label{definindo U}
\overline{U} \subset U_0.
\end{equation} 
We consider $U$ with the coordinate system $(x^1,\ldots,x^n).$ Let
\begin{equation} 
\label{sistema de coordenadas x,y}
(x,y) = (x^1,\ldots,x^n,y^1,\ldots,y^n)
\end{equation}
be the natural coordinates on $TU$ with respect to $(x^1,\ldots,x^n).$

As in Section \ref{section growth rates}, we will work with two asymmetric norms in each tangent space $T_xU:$ The Euclidean norm  with respect to the coordinate system $(y^1,\ldots,y^n)$ and $F$. 
For instance the closed ball centered at $v$ and radius $r$ in $T_xU $ with respect to norm $F$ is denoted by $B_F[x,v,r]$ and so on.

Analogously to Section \ref{section growth rates}, define
\begin{equation} 
\label{constante r_U}
r_U= \max_{(x,y) \in \ \overline{U}\times S_{||\cdot||}[x,0,1]} F(x,y) = \max_{(x,y) \in \ \overline{U}\times B_{||\cdot||}[x,0,1]} F(x,y)
\end{equation}
and
\begin{equation} 
\label{constante rho_U}
r_u= \min_{(x,y) \in \ \overline{U}\times S_{||\cdot||}[x,0,1]} F(x,y).
\end{equation}
The following version of Lemma \ref{proposicao F em Rn} is straightforward:

\begin{lem} 
\label{lema F em TU}
Let $F$ be a $C^0$-Finsler structure on $M,$ $U$ be an open subset satisfying (\ref{definindo U}) and  $(x^1,\ldots,x^n,y^1,\ldots,y^n)$ be a coordinate system on $TU$ as defined in (\ref{sistema de coordenadas x,y}). Let $v\in {\mathbb R^n}$ be a nonzero vector and consider
\begin{equation} 
\label{intervalo contendo epsilon}
\varepsilon \in \left(0,\frac{r_u \Vert v \Vert}{4nr_U}\right],
\end{equation}
where $r_U$ and $r_u$ are defined by (\ref{constante r_U}) and (\ref{constante rho_U}) respectively. Then
\[
F\left(x,w+t\frac{v}{\|v\|}\right) - F(x,w) > \frac{r_u}{8}t
\]
for every $x \in U,$ $w \in B_{\|\cdot\|}[x,v,\varepsilon]$ and $t>0$.
\end{lem}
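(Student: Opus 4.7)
The strategy is to reduce the claim to Lemma \ref{proposicao F em Rn} applied pointwise in $x$, exploiting the fact that the uniform constants $r_U$ and $r_u$ dominate, respectively from above and below, the analogous pointwise constants. The coordinate system on $U$ identifies each $T_xU$ with $\mathbb{R}^n$, and for each fixed $x\in U$ the restriction $F(x,\cdot)$ is an asymmetric norm on $\mathbb{R}^n$ in the sense of the previous section, so Lemma \ref{proposicao F em Rn} is directly applicable fiberwise.

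First I would introduce, for each $x \in \overline{U}$, the pointwise quantities
\[
r_M(x) = \max_{y \in S_{\|\cdot\|}[x,0,1]} F(x,y), \qquad r_m(x) = \min_{y \in S_{\|\cdot\|}[x,0,1]} F(x,y),
\]
which are exactly the constants (\ref{constante r}) and (\ref{constante rho}) associated with the asymmetric norm $F(x,\cdot)$ on $T_xU \cong \mathbb{R}^n$. By the very definition of $r_U$ and $r_u$ in (\ref{constante r_U}) and (\ref{constante rho_U}), one has the uniform estimates $r_m(x) \geq r_u$ and $r_M(x) \leq r_U$ for every $x \in \overline{U}$, hence
\[
\frac{r_u \|v\|}{4n r_U} \;\leq\; \frac{r_m(x)\,\|v\|}{4n\,r_M(x)}
\quad\text{and}\quad \frac{r_u}{8} \;\leq\; \frac{r_m(x)}{8}.
\]

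Now I would fix $x \in U$ and apply Lemma \ref{proposicao F em Rn} to the asymmetric norm $F(x,\cdot)$ on $\mathbb{R}^n$, with the same vector $v$. The hypothesis (\ref{intervalo contendo epsilon}) on $\varepsilon$ together with the first inequality above guarantees that $\varepsilon$ lies in the admissible range $(0, r_m(x)\|v\|/(4n r_M(x))]$ required by that lemma. Consequently, for every $w \in B_{\|\cdot\|}[x,v,\varepsilon]$ and every $t>0$,
\[
F\!\left(x,\,w + t\tfrac{v}{\|v\|}\right) - F(x,w) \;>\; \frac{r_m(x)}{8}\,t \;\geq\; \frac{r_u}{8}\,t,
\]
the second inequality following from the uniform lower bound on $r_m(x)$. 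Since $x \in U$ was arbitrary, the conclusion holds for every $x \in U$.

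There is essentially no obstacle: the only subtlety is observing that the uniform constants $r_U$ and $r_u$ are the right replacements for the pointwise $r_M(x)$ and $r_m(x)$, and that both the admissible range for $\varepsilon$ and the slope estimate improve when passing from the uniform constants to the pointwise ones. The uniformity in $x$ is automatic because neither $\varepsilon$ nor the slope $r_u/8$ depends on $x$.
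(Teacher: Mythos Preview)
Your proof is correct and is essentially the same approach as the paper's, which simply states that the argument is analogous to Lemma~\ref{proposicao F em Rn}. Your version makes the reduction explicit by applying Lemma~\ref{proposicao F em Rn} fiberwise and noting that the uniform bounds $r_u \leq r_m(x)$ and $r_M(x) \leq r_U$ ensure both the admissible range for $\varepsilon$ and the slope estimate carry over; this is exactly the content of ``analogous''.
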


\begin{proof}
Analogous to the proof of Lemma \ref{proposicao F em Rn}.
\end{proof}

The first trial for the vertical smoothing in $U$ would be defining
\[
(\eta_\varepsilon \ast_v F) (x,y) = \int \eta_\varepsilon(z)F(x,y-z)dz
\]
for $\varepsilon $ given by (\ref{intervalo contendo epsilon}), where $\ast_v := \ast_2$ stands for the mollifier smoothing on the vertical part (along the fibers of $TM$). 
We can prove that for every  $x \in U$, $T_xU \cap (\eta_\varepsilon \ast_v F)^{-1}(r_U)$ is a smooth hypersurface in $T_xU$ such that its radial projection onto  $S_{\|\cdot\|}[x,0,1]$ is a diffeomorphism. 
Then we could define the vertical smoothing $G_\varepsilon$ of $F$ as the $C^0$-Finsler structure such that $S_{G_\varepsilon}[x,0,r_U]$ is given by $T_xU \cap (\eta_\varepsilon \ast_v F)^{-1} (r_U)$. 
We can prove that everything works nicely except for the fact that these asymmetric norms aren't necessarily strongly convex. This problem is solved using
\[
\zeta_\varepsilon = (1-u(\varepsilon))\eta_\varepsilon+u(\varepsilon ) \eta_{\frac{2r_U}{r_u}}
\] 
instead of $\eta_\varepsilon$, where 
\begin{equation}
\label{define-u}
u:(0,1) \rightarrow (0,\infty)
\end{equation} 
is a increasing function satisfying $\lim_{\varepsilon \rightarrow 0}u(\varepsilon) = 0$. The ``perturbation'' due to $u(\varepsilon ) \eta_{\frac{2r_U}{r_u}}$ is enough to assure the strong convexity of $\zeta_\varepsilon$ due to Lemma \ref{condicao para curvatura}.  
Of course there are some work to do, like redefining $\varepsilon$ in order to make everything work. We are going to make these calculations in sequel.

First of all we define the interval of variation $(0,\tau)$ of $\varepsilon$ choosing
\begin{equation}
\label{define-tau}
\tau  \in \left( 0, \frac{r_u}{16nr_U} \right)
\end{equation}
such that
\[
u(\varepsilon) \in \left( 0, \frac{r_u}{16nr_U} \right)
\]
for every $\varepsilon \in (0,\tau)$. For a fixed $\varepsilon$ define
\[
(\zeta_\varepsilon \ast_v F) (x,y) = \int \zeta_\varepsilon(z)F(x,y-z)dz.
\]
It is straightforward that $(\zeta_\varepsilon \ast_v F)$ is continuous and $(\zeta_\varepsilon \ast_v F)(x,\cdot)$ is smooth for every $x\in U.$ We use the notation $\Hessian_y(\zeta_\varepsilon \ast_v F)(x,y)$ in order to denote the Hessian of $(\zeta_\varepsilon \ast_v F)$ with respect to $(y^1,\ldots,y^n)$.

Notice that
\[
\max_{(x,y) \in \ \overline{U}\times S_{\|\cdot\|} \left[ x,0,\frac{1}{2} \right]}F(x,y) = \frac{r_U}{2},
\]
\[
\min_{(x,y) \in \ \overline{U}\times S_{\|\cdot\|}\left[ x,0,\frac{2r_U}{r_u} \right]}F(x,y) = 2r_U
\]
and
\[
\mathcal A := \left\{(x,y) \in TU; x \in U, \|y\| \in \left[ \frac{1}{2}, \frac{2r_U}{r_u} \right] \right\}
\] 
contains $F^{-1}([r_U/2,2r_U])$.
Define
\[
\mathcal A_{\frac{1}{2}} := \left\{(x,y) \in \mathcal A; \|y\| = \frac{1}{2}\right\}
\]
and
\[
\mathcal A_{\frac{2r_U}{r_u}} := \left\{(x,y) \in \mathcal A; \|y\|=\frac{2r_U}{r_u} \right\}.
\]

The following lemma will be used to construct the spheres in each tangent space of the vertical mollifier smoothing.

\begin{lem} 
\label{lema propriedade de F_epsilon} 
$(\zeta_\varepsilon \ast_v F)$ has the following properties for every $\varepsilon \in \left(0, \tau\right):$
\begin{enumerate}
\item $\Hessian_y(\zeta_\varepsilon \ast_v F)(x,y)$ is positive definite for every $(x,y)\in \mathrm{int} \mathcal A$;
\item The \em{vertical radial derivative} of $(\zeta_\varepsilon \ast_v F)$ on $\mathcal A$ is strictly positive. That is,
\[
y^i(\zeta_\varepsilon \ast_v F)_{y^i}(x,y) > 0, \forall (x,y) \in \mathcal A;
\]
\item $(\zeta_\varepsilon \ast_v F)(x,y)<r_U$ for every $(x,y) \in TU$ with $\|y\| \leq \frac{1}{2};$
\item $(\zeta_\varepsilon \ast_v F)(x,y)>r_U$ for every $(x,y) \in TU$ with $\|y\| \geq \frac{2r_U}{r_u}.$
\end{enumerate}
In particular, for every $x \in U$, we have that $(\zeta_\varepsilon \ast_v F)^{-1}(r_U)\cap T_xU$ is a smooth hypersurface in $T_xU$ such that its radial projection onto  $S_{\|\cdot\|}[x,0,1]$ is a diffeomorphism.
\end{lem}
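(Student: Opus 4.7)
My plan is to prove items (1)--(4) in sequence and then combine them with the regular value theorem and a monotonicity argument along rays to establish the level-set statement.

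Items (3) and (4) come essentially from estimates. For (4), the radial symmetry of $\zeta_\varepsilon$ gives $\int z\,\zeta_\varepsilon(z)\,dz = 0$, so Jensen's inequality applied to the convex function $F(x,\cdot)$ yields $(\zeta_\varepsilon\ast_vF)(x,y)\ge F(x,y)\ge r_u\|y\|\ge 2r_U>r_U$ whenever $\|y\|\ge 2r_U/r_u$. For (3), I will use the $r_U$-Lipschitz bound $F(x,y-z)\le r_U(\|y\|+\|z\|)$ inside the convolution; splitting $\zeta_\varepsilon$ into its $\eta_\varepsilon$ and $\eta_{2r_U/r_u}$ parts and plugging in $\|y\|\le 1/2$, $\varepsilon<\tau<r_u/(16nr_U)$, $u(\varepsilon)<r_u/(16nr_U)$, and $r_u\le r_U$ gives an upper bound of $r_U/2+3r_U/(16n)<r_U$. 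Item (2) will be handled by splitting
\[
y^i(\zeta_\varepsilon\ast_vF)_{y^i}(x,y)=(1-u(\varepsilon))\,y^i(\eta_\varepsilon\ast_vF)_{y^i}(x,y)+u(\varepsilon)\,y^i(\eta_{2r_U/r_u}\ast_vF)_{y^i}(x,y),
\]
applying Lemma \ref{lema F em TU} with $v=y$ and $w=y-z$ to the dominant first term (whose hypotheses hold since $\varepsilon<\tau$ and $\|y\|\ge 1/2$) to get $y^i(\eta_\varepsilon\ast_vF)_{y^i}(x,y)\ge r_u\|y\|/8$, and using the $r_U$-Lipschitz bound to see $|y^i(\eta_{2r_U/r_u}\ast_vF)_{y^i}|\le r_U\|y\|$. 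The choice of $\tau$ then makes the combination strictly positive on $\mathcal{A}$.

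The technical heart of the proof, and the main obstacle, is item (1). Positive semidefiniteness of $\Hessian_y(\zeta_\varepsilon\ast_vF)$ is automatic from Lemma \ref{suavizacao mollifier e convexa}; the extra rigidity needed for strict positive definiteness must come from the perturbation $u(\varepsilon)\,\eta_{2r_U/r_u}\ast_vF$, whose support is designed to reach the kink of $F$ at the origin. For a unit vector $v$, decomposing $z=sv+w$ with $w\in v^\perp$ gives
\[
v^iv^j(\eta_{2r_U/r_u}\ast_vF)_{y^iy^j}(x,y)=\int_{v^\perp}\!\int_\mathbb{R}\partial_s^2\eta_{2r_U/r_u}(sv+w)\,F(x,y-sv-w)\,ds\,dw.
\]
Integration by parts twice in $s$ recasts the inner integral as $\int \eta_{2r_U/r_u}(sv+w)\,d\mu_w(s)$, where $d\mu_w$ is the nonnegative distributional second derivative of the convex function $s\mapsto F(x,y-sv-w)$. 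At the specific $w_0=y-(y\cdot v)v$, the line $\{y-sv-w_0\}=\{(y\cdot v-s)v\}$ passes through the origin at $s=y\cdot v$, and $F(x,\cdot)$ has a kink there in direction $v$ with slope jump $F(x,v)+F(x,-v)>0$, depositing a Dirac mass of this size into $d\mu_{w_0}$. Since $\|y\|<2r_U/r_u$ for $y\in\mathrm{int}\,\mathcal{A}$, the point $s=y\cdot v$ lies strictly inside the support of $\eta_{2r_U/r_u}(\cdot v+w_0)$, so the inner integral is strictly positive at $w_0$; dominated convergence in $w$ propagates this to a neighborhood of $w_0$, and integrating in $w$ yields a strictly positive outer integral.

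Finally, items (3) and (4) confine $(\zeta_\varepsilon\ast_vF)^{-1}(r_U)\cap T_xU$ to $\{y:1/2<\|y\|<2r_U/r_u\}\subset\mathrm{int}\,\mathcal{A}$, and item (2) forces $\nabla_y(\zeta_\varepsilon\ast_vF)\ne 0$ there (its pairing with $y$ is strictly positive), so the regular value theorem gives a smooth hypersurface. Along each ray $t\mapsto tu$ with $u\in S_{\|\cdot\|}[x,0,1]$, item (2) makes $t\mapsto(\zeta_\varepsilon\ast_vF)(x,tu)$ strictly increasing on $[1/2,2r_U/r_u]$, while items (3) and (4) place its endpoint values strictly below and strictly above $r_U$; the intermediate value theorem then yields a unique intersection of each ray with the level set, and the implicit function theorem upgrades the resulting radial projection to a diffeomorphism onto $S_{\|\cdot\|}[x,0,1]$.
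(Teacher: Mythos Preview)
Your proof is correct and follows the same overall architecture as the paper's: items (2)--(4) via direct estimates, item (1) by splitting off the $u(\varepsilon)\eta_{2r_U/r_u}$ perturbation and showing that the second directional derivative of this term is strictly positive precisely because the support of $\eta_{2r_U/r_u}$ is wide enough to reach the kink of $F(x,\cdot)$ at the origin.

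There are two genuine, if modest, methodological differences worth recording. First, for item (4) the paper drops the $u(\varepsilon)\eta_{2r_U/r_u}$ term and uses the one-sided triangle inequality $F(x,y-z)\ge F(x,y)-F(x,z)$ together with $\varepsilon<\tau$ to get $(\zeta_\varepsilon\ast_vF)(x,y)\ge (1-u(\varepsilon))(2r_U-r_u/16)>r_U$; your Jensen argument, exploiting that $\zeta_\varepsilon$ is an even probability density and $F(x,\cdot)$ is convex, is shorter and gives the sharper bound $(\zeta_\varepsilon\ast_vF)(x,y)\ge F(x,y)$ directly. Second, for item (1) the paper avoids distributional derivatives entirely: after the same orthogonal change of variables and Fubini reduction you perform, it invokes the elementary one-dimensional Lemma~\ref{condicao para curvatura}, which characterises $(\eta_\varepsilon\ast f)''(x)=0$ for convex $f$ as equivalent to $f$ being affine on $(x-\varepsilon,x+\varepsilon)$, and then observes that the slice through $w'=0$ is the asymmetric norm $F(x,0,\cdot)$ on $\mathbb R$, which is never affine across the origin. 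Your route through the nonnegative Radon measure $d\mu_w$ and the explicit Dirac mass $(F(x,v)+F(x,-v))\delta_{y\cdot v}$ is the distribution-theoretic rephrasing of the same fact; it is perhaps more transparent about \emph{why} the support $2r_U/r_u$ is exactly what is needed, while the paper's Lemma~\ref{condicao para curvatura} has the advantage of being self-contained and avoiding any measure-theoretic machinery.
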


\begin{proof}

\

(1) Given $(x,y) \in \mathrm{int}\mathcal A$ and $\xi\in {\mathbb R^n} \setminus\{0\},$ we must prove that
\[
(\zeta_\varepsilon \ast_v F)_{y^iy^j}(x,y)\xi^i\xi^j>0.
\]
After an orthogonal change of coordinates $(y^1, \ldots, y^n) \mapsto (z^1, \ldots, z^n)$ that fixes $(0,\ldots, 0)$ and satisfies $\frac{\partial}{\partial z^n} = \xi$, we have that
\[
(\zeta_\varepsilon \ast_v F)_{y^iy^j}(x,y)\xi^i\xi^j = (\zeta_\varepsilon \ast_v F)_{z^nz^n}(x,z).
\]
We must prove that the latter is strictly positive.
Notice that
\[
(\zeta_\varepsilon \ast_v F)_{z^nz^n} (x,z) = (1- u( \varepsilon)) (\eta_\varepsilon \ast_v F)_{z^nz^n} (x,z) + u(\varepsilon ) (\eta_{\frac{2r_U}{r_u}} \ast_v F)_{z^nz^n}(x,z).
\]
By Lemma \ref{condicao para curvatura}, we have that $(\eta_\varepsilon \ast_v F)_{z^nz^n}(x,z)\geq 0$. 
Now we use Fubini's Theorem in order to prove that  $ (\eta_{\frac{2r_U}{r_u}} \ast_v F)_{z^nz^n}(x,z)>0$. 
Write $z = (z^\prime, z^n)$, where $z^\prime = (z^1, \ldots, z^{n-1})$.
We have that
\begin{eqnarray*}
& & \left(\eta_{\frac{2r_U}{r_u}} \ast_v F\right)_{z^nz^n} (x,z)
= \int_{{\mathbb R^n}} \left( \eta_{\frac{2r_U}{r_u}}\right)_{z^nz^n}(z-w)F(x,w) dw \\
& = & \int_{{\mathbb R}^{n-1}} \left( \int_{\mathbb R} \left( \eta_{\frac{2r_U}{r_u}}\right)_{z^zz^n}(z^\prime - w^\prime, z^n - w^n)F(x,w',w^n) dw^n\right) dw'.
\end{eqnarray*}
Consider 
\[
\eta_{\frac{2r_U}{r_u}}(z^\prime - w^\prime, \cdot): \mathbb R \rightarrow {\mathbb R}.
\]
Depending on $w^\prime$, the function $\eta_{\frac{2r_U}{r_u}}(z'-w', \cdot)$ is identically zero or else its graph has the bell shape described in Remark \ref{formato eta}. 
These facts are direct consequences of the definition of $\eta$.
In either case we have that
\[
\int_{\mathbb R} \left( \eta_{\frac{2r_U}{r_u}} \right)_{z^nz^n} (z^\prime - w^\prime, z^n - w^n)F(x,w^\prime,w^n) dw^n  \geq   0
\]
due to Lemma \ref{condicao para curvatura}.
In the particular case where $w^\prime = 0$, 
$F(x,0,\cdot)$ is an asymmetric norm on $\mathbb R$ and $\eta_{\frac{2r_U}{r_u}}(z^\prime, z^n-w^n) > 0$ when $w^n=0$ because $\Vert z\Vert < \frac{2r_U}{r_u}$. 
The function $w^n \mapsto\eta_{\frac{2r_U}{r_u}}(z^\prime, z^n - w^n)$ has a bell shape and $w^n=0$ lies in its support. 
Therefore $F(x,0, \cdot)$ is convex and its restriction to the support of $w^n \mapsto\eta_{\frac{2r_U}{r_u}}(z^\prime, z^n - w^n)$ is not affine.
Hence 
\[
\int_{z^n-\frac{2r_U}{r_u}}^{z^n+\frac{2r_U}{r_u}} \left( \eta_{\frac{2r_U}{r_u}} \right)_{z^nz^n} (z^\prime, z^n-w^n)F(x, 0, w^n) dw^n  >  0
\]
due to Lemma \ref{condicao para curvatura}, what settles Item (1).

\

(2) Consider $(x,y) \in \mathcal A$. 
Then
\begin{eqnarray*}
& & (\zeta_\varepsilon \ast_v F) \left(x,y+t\frac{y}{\|y\|}\right)-(\zeta_\varepsilon \ast_v F) (x,y) \\
& = & (1- u(\varepsilon ))\int_{B_{\|\cdot\|}[x,y,\varepsilon]} \eta_\varepsilon(y-z)\left[ F\left(x,z+t\frac{y}{\|y\|}\right) - F(x,z) \right]dz\\
& + & u( \varepsilon ) \int_{B_{\|\cdot\|}[x,0,(2r_U)/r_u]} \eta_{\frac{2r_U}{r_u}}(z) \left[ F\left( x,y-z+t\frac{y}{\|y\|}\right) - F(x,y-z)\right] dz \\
& \geq & \frac{(1-u(\varepsilon)) r_u}{8}t-u(\varepsilon) t \int_{B_{\|\cdot\|}[x,0,(2r_U)/r_u]} \eta_{\frac{2r_U}{r_u}}(z)F\left(x,\frac{y}{\|y\|}\right) dz \\
& \geq & \frac{(1-u(\varepsilon))r_u}{8}t - u(\varepsilon) t r_U
 > \frac{15}{16}\frac{r_u}{8}t-\frac{r_u}{16r_U}tr_U 
 = \frac{7}{128}r_u t,
\end{eqnarray*}
for $t>0$ due to Lemma \ref{lema F em TU}.  Thus $\frac{7}{128}r_u$ is a lower bound for $\frac{y^i}{||y||}(\zeta_\varepsilon \ast_v F)_{y^i}(x,y)$.

\

(3) Consider $(x,y) \in TU$ such that $\Vert y \Vert \leq 1/2$. 
Then
\begin{eqnarray*}
(\zeta_\varepsilon \ast_v F)(x, {y}) & = & \int \left[(1-u(\varepsilon))\eta_\varepsilon(z) + u(\varepsilon)\eta_{\frac{2r_U}{r_u}}(z) \right] F(x, {y}-z)dz \\
& \leq & \int \left[(1-u(\varepsilon))\eta_\varepsilon(z)+u(\varepsilon)\eta_{\frac{2r_U}{r_u}}(z) \right] \left[ F(x, {y})+F(x,z)\right]dz \\
& \leq & \int_{B_{\|\cdot\|}[x,0,\varepsilon]} \eta_\varepsilon(z) \left[ F(x, {y})+F(x,z)\right]dz \\
& + & u(\varepsilon)\int_{B_{\|\cdot\|}[x,0,(2r_U)/r_u]} \eta_{\frac{2r_U}{r_u}}(z) \left[ F(x, {y})+F(x,z)\right]dz \\ 
& \leq & \frac{r_U}{2} + \frac{r_u}{16nr_U}r_U+\frac{r_u}{16nr_U}\frac{r_U}{2}+\frac{r_u}
{16nr_U}\frac{2r_U}{r_u}r_U
\leq \frac{23}{32}r_U < r_U.
\end{eqnarray*}

(4) Consider $(x,y)\in TU$ such that $\Vert y\Vert \geq \frac{2r_U}{r_u}$. Then
\begin{eqnarray*}
(\zeta_\varepsilon \ast_v F)(x, {y})& = & \int \left[(1-u(\varepsilon))\eta_\varepsilon(z)+u(\varepsilon)\eta_{\frac{2r_U}{r_u}}(z) \right] F(x, {y}-z)dz \\
& \geq & \int (1-u(\varepsilon))\eta_\varepsilon(z) F(x, {y}-z)dz \\
& \geq & \int (1-u(\varepsilon))\eta_\varepsilon(z) F(x, {y})dz-\int (1-u(\varepsilon))\eta_\varepsilon(z) F(x,z)dz \\
& \geq & (1-u(\varepsilon))2r_U-(1-u(\varepsilon))\frac{r_u}{16} \\
& \geq & 2\frac{15}{16}r_U-\frac{r_U}{16} 
= \frac{29}{16} r_U > r_U
\end{eqnarray*}
what settles the lemma.
\end{proof}

The next lemma is an intermediate step in order to prove several uniform convergences afterwards. 

\begin{lem} 
\label{convergencia uniforme de F til} 
$(\zeta_\varepsilon \ast_v F) \rightarrow F$ uniformly on compact subsets of  $TU\cong U \times \mathbb R^n$. 
\end{lem}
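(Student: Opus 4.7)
The strategy is to decompose $(\zeta_\varepsilon \ast_v F)$ into its two convex combination components and apply Proposition \ref{funcao convergencia uniforme} to the first, while using the explicit bound $u(\varepsilon) \to 0$ to control the second. Concretely, by linearity of the convolution,
\[
(\zeta_\varepsilon \ast_v F)(x,y) = (1-u(\varepsilon)) (\eta_\varepsilon \ast_v F)(x,y) + u(\varepsilon) \left(\eta_{\frac{2r_U}{r_u}} \ast_v F \right)(x,y),
\]
so subtracting $F(x,y) = (1-u(\varepsilon)) F(x,y) + u(\varepsilon) F(x,y)$ gives
\[
(\zeta_\varepsilon \ast_v F) - F = (1-u(\varepsilon)) \bigl[(\eta_\varepsilon \ast_v F) - F\bigr] + u(\varepsilon) \left[\left(\eta_{\frac{2r_U}{r_u}} \ast_v F\right) - F\right].
\]

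Let $K \subset U \times \mathbb{R}^n$ be compact and fix $\delta > 0$. For the first term, note that $|1-u(\varepsilon)| \leq 1$ for all $\varepsilon \in (0,\tau)$, and by Proposition \ref{funcao convergencia uniforme} (applied with the vertical variables playing the role of the smoothed coordinates), $(\eta_\varepsilon \ast_v F) \to F$ uniformly on $K$, so there exists $\varepsilon_1 > 0$ such that $\|(\eta_\varepsilon \ast_v F) - F\|_K < \delta/2$ for every $\varepsilon \in (0, \varepsilon_1)$. For the second term, set
\[
M_K := \sup_{(x,y) \in K} \left| \left(\eta_{\frac{2r_U}{r_u}} \ast_v F\right)(x,y) - F(x,y) \right|,
\]
which is finite: indeed, since $F$ is continuous on $TU$ and the convolution in the vertical variable only samples $F(x,y-z)$ for $\|z\| \leq 2r_U/r_u$, the integrand is uniformly bounded on $K$ by continuity of $F$ on the compact set $\{(x, y-z) : (x,y)\in K,\, \|z\| \leq 2r_U/r_u\}$. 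Since $u(\varepsilon) \to 0$, there exists $\varepsilon_2 > 0$ such that $u(\varepsilon) M_K < \delta/2$ whenever $\varepsilon \in (0, \varepsilon_2)$.

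Taking $\varepsilon < \min\{\varepsilon_1, \varepsilon_2, \tau\}$ and combining the two estimates via the triangle inequality yields $|(\zeta_\varepsilon \ast_v F)(x,y) - F(x,y)| < \delta$ for every $(x,y) \in K$, which is the desired uniform convergence. There is no real obstacle here: the argument is essentially a linearity-plus-weights computation, and the only point requiring a moment of care is verifying that the ``fattening'' of $K$ used implicitly in the convolution (by at most $2r_U/r_u$ in the $y$-direction) lands in a set where $F$ is bounded — which is automatic since $F$ is continuous and the fattened set is still compact in $TU$.
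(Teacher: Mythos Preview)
Your proof is correct and is essentially what the paper has in mind: the paper's proof is the single line ``It is a direct consequence of Proposition \ref{funcao convergencia uniforme},'' and your decomposition into the $(1-u(\varepsilon))(\eta_\varepsilon \ast_v F)$ piece (handled by that proposition) plus the $u(\varepsilon)(\eta_{2r_U/r_u}\ast_v F)$ piece (killed by $u(\varepsilon)\to 0$ and boundedness on the fattened compact) is exactly the natural way to unpack that sentence. Your argument is simply a more explicit rendering of the same idea.
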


\begin{proof}
It is a direct consequence of Proposition \ref{funcao convergencia uniforme}.
\end{proof}

Let $x \in U$ and $\varepsilon \in \left(0, \tau \right)$. 
Lemma \ref{lema propriedade de F_epsilon} states that $(\zeta_\varepsilon \ast_v F)^{-1}(r_U)\cap T_xU $ is a smooth hypersurface in $T_xU$ such that its radial projection onto the Euclidean sphere in $T_xU$ is a diffeomorphism. 
Moreover $(\zeta_\varepsilon \ast_v F)(x, \cdot)$ is a convex function due to Lemma \ref{suavizacao mollifier e convexa} and $(\zeta_\varepsilon \ast_v F)^{-1}([0, r_U])\cap T_xU $ is a convex subset containing the origin.
We define the function 
\begin{equation} \label{aplicacao G_epsilon}
G_\varepsilon: TU \rightarrow {\mathbb R}
\end{equation}
such that $G_\varepsilon(x,\cdot): T_xU \rightarrow \mathbb R$ is an asymmetric norm with 
\[
S_{G_\varepsilon(x, \cdot)}[0,r_U]=(\zeta_\varepsilon \ast_v F)^{-1}(r_U)\cap T_xU.
\]
We are going to prove that $G_\varepsilon$ is a $C^0$-Finsler structure on $TU$ (see Proposition \ref{propriedades da aplicacao G epsilon}).
Consider a coordinate system $(x^i)$ on $U$ and $((x^i),(y^i))$ the respective natural coordinate on $TU$. 
We have the identification $TU\cong (x^i)(U) \times \mathbb R^n$ and we define spherical coordinates in the vertical part $\mathbb R^n$ of $TU$. 
Let $(\theta^i) = (\theta^2,\ldots,\theta^n):S_\theta \rightarrow W_\theta \subset \mathbb R^{n-1}$ be a coordinate system on an open subset $S_\theta$ of the Euclidean sphere $S_{\Vert \cdot\Vert}[0,1]$. 
If $r>0$ is the radial coordinate in ${\mathbb R^n},$ then $(r,\theta):=(r,\theta^2,\ldots,\theta^n)$ is a coordinate system in the open cone $C(S_\theta)\subset \mathbb R^n$ of $S_\theta$ with the vertex at the origin. 
Varying $x \in U$, we have that
\begin{equation} 
\label{sistema induzido}
((x^i),r,(\theta^i)):U\times C(S_\theta) \rightarrow (x^i)(U)\times(0,\infty) \times W_\theta , 
\end{equation}
is a coordinate system on $TU\backslash 0$.

Let $(x_0,y_0)\in TU$ with $y_0\in S_{G_\varepsilon(x_0, \cdot)}[0,r_U]$.
Consider a coordinate system $((x^i),r,(\theta^i))$ as defined in (\ref{sistema induzido}) in a neighborhood of $(x_0,y_0)$. 
Let
\[
(x_0, r_0, \theta_0) := (x_0^1, \ldots, x_0^n, r_0,\theta_0^2,\ldots,\theta_0^{n })
\]
be the coordinates of $(x_0, y_0)$.
Item (2) of  Lemma \ref{lema propriedade de F_epsilon} implies that 
\[
\frac{\partial (\zeta_\varepsilon \ast_v F)}{\partial r}(x_0,r_0,\theta_0) > 0
\] 
and the implicit function theorem states that there exist a smooth function
\[
\phi_\varepsilon(x_0,\cdot): W_\theta \rightarrow \left(\frac{1}{2},\frac{2r_U}{r_u}\right)
\]
satisfying the following conditions:
\begin{enumerate}[(i)]
\item For every $\theta \in W_\theta$, $\phi_\varepsilon(x_0,\theta)$ is the unique value such that 
\[
(\zeta_\varepsilon \ast_v F)(x_0,\phi_\varepsilon(x_0,\theta),\theta)=r_U;
\]
\item
\[
\frac{\partial \phi_\varepsilon(x_0,\theta)}{\partial \theta^i} 
= - \frac{\frac{\partial (\zeta_\varepsilon \ast_v F)(x_0,\phi_\varepsilon(x_0,\theta),\theta) }{\partial \theta^i}}{\frac{\partial (\zeta_\varepsilon \ast_v F)(x_0,\phi_\varepsilon(x_0,\theta),\theta) }{\partial r}}
\]
for every $\theta \in W_\theta$.
\end{enumerate}
Varying $x $ in $U,$ we have the function
\begin{equation} 
\label{aplicacao phi_epsilon}
\phi_\varepsilon:U\times W_\theta \rightarrow \left(\frac{1}{2},\frac{2r_U}{r_u}\right)
\end{equation}
such that $(\zeta_\varepsilon \ast_v F)(x, \phi_\varepsilon(x,\theta),\theta)=r_U$ for every $ (x,\theta) \in U\times W_\theta$.
Therefore the expression of $G_\varepsilon$ in terms of $(x,r,\theta) \in U\times (0,\infty) \times W_\theta$ is given by
\begin{equation} 
\label{G_epsilon em coordenadas esfericas} 
G_\varepsilon (x,r,\theta) = \frac{r .r_U}{\phi_\varepsilon(x, \theta)}
\end{equation}
due to the positive homogeneity of $G_\varepsilon$ with respect to the variable $r$.

\begin{pro} 
\label{propriedades da aplicacao G epsilon}  
The function $G_\varepsilon$ defined in (\ref{aplicacao G_epsilon}) satisfies the following properties for every $\varepsilon \in \left(0,\frac{r_u}{16nr_U}\right):$
\begin{enumerate}
\item $G_\varepsilon$ is a $C^0$-Finsler structure; 
\item $G_\varepsilon(x,\cdot)$ is smooth in $T_xU \setminus \{0\};$
\item $\frac{\partial^{|\alpha|} G_\varepsilon}{\partial (y^1)^{\alpha_1} \cdots \partial ({y^n})^{\alpha_n}}$ is continuous in $U\times ({\mathbb R^n}\setminus \{0\})$ for each multiindex $\alpha=(\alpha_1,\ldots,\alpha_n)$.
\end{enumerate}
\end{pro}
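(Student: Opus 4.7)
The plan is to exploit the explicit representation $G_\varepsilon(x,r,\theta)=r\,r_U/\phi_\varepsilon(x,\theta)$ from \eqref{G_epsilon em coordenadas esfericas}, and reduce each of the three claims to a corresponding regularity property of the implicit function $\phi_\varepsilon$ defined in \eqref{aplicacao phi_epsilon}. The key observation is that although $\zetaastv$ is only continuous jointly in $(x,y)$, its partial derivatives with respect to the vertical variables $y^i$ of every order are jointly continuous on $U\times(\mathbb{R}^n\setminus\{0\})$ by Lemma \ref{derivada zeta vertical continua} (applied with $\ast_v=\ast_2$ and multiindex in the $y$-variables). This is the fact that drives everything.

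First I would establish that $G_\varepsilon(x,\cdot)$ is an asymmetric norm on each $T_xU$. Positive homogeneity is built into formula \eqref{G_epsilon em coordenadas esfericas}. Positivity and $G_\varepsilon(x,y)=0\Leftrightarrow y=0$ follow because $\phi_\varepsilon(x,\theta)\in(1/2,2r_U/r_u)$ is uniformly bounded away from $0$ and $\infty$. For the triangle inequality, I would use the fact that $\zetaastv(x,\cdot)$ is convex (by Lemma \ref{suavizacao mollifier e convexa} applied to the vertical variables, since $F(x,\cdot)$ is convex), so that the sublevel set $\{y\in T_xU:\zetaastv(x,y)\le r_U\}$ is a convex subset of $T_xU$ containing the origin in its interior (by Item (3) of Lemma \ref{lema propriedade de F_epsilon}); the Minkowski functional of this set is precisely $G_\varepsilon(x,\cdot)/r_U$, whence the triangle inequality. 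For the continuity of $G_\varepsilon$ on $TU$, on $U\times(\mathbb{R}^n\setminus\{0\})$ I would use \eqref{G_epsilon em coordenadas esfericas} together with joint continuity of $\phi_\varepsilon$, which in turn follows from the implicit function theorem applied to $\zetaastv(x,r,\theta)-r_U=0$ (the vertical radial derivative being uniformly positive by Item (2) of Lemma \ref{lema propriedade de F_epsilon}, and $\zetaastv$ itself being jointly continuous); at points of the zero section, continuity follows from the bound $G_\varepsilon(x,y)\le(r_U/\min_\theta\phi_\varepsilon(x,\theta))\|y\|$, which tends to $0$ as $y\to 0$ uniformly for $x$ in compact subsets.

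For (2) and (3) I would work in the coordinates $((x^i),r,(\theta^i))$ of \eqref{sistema induzido}. For fixed $x\in U$, the map $\theta\mapsto\phi_\varepsilon(x,\theta)$ is smooth by the classical implicit function theorem, because $\zetaastv(x,\cdot)$ is smooth on $T_xU\setminus\{0\}$ (explicit differentiation under the integral sign, as $\zeta_\varepsilon$ is smooth and compactly supported) with strictly positive radial derivative on the level set. Thus $G_\varepsilon(x,\cdot)$ is smooth away from the origin, proving (2). For (3), the identity $\zetaastv(x,\phi_\varepsilon(x,\theta),\theta)=r_U$ differentiated $k$ times in $\theta$ expresses $D^\alpha_\theta\phi_\varepsilon$ algebraically in terms of vertical partial derivatives of $\zetaastv$ (evaluated at $(x,\phi_\varepsilon(x,\theta),\theta)$), lower-order $\theta$-derivatives of $\phi_\varepsilon$, and the reciprocal of the strictly positive vertical radial derivative. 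An induction on $|\alpha|$, using Lemma \ref{derivada zeta vertical continua} at each stage to supply joint continuity of the vertical partial derivatives of $\zetaastv$, gives the joint continuity of every $D^\alpha_\theta\phi_\varepsilon$ on $U\times W_\theta$. Substituting into \eqref{G_epsilon em coordenadas esfericas} and converting back to the $y^i$ coordinates (the change $((r,\theta)\leftrightarrow(y^i))$ being smooth on $\mathbb{R}^n\setminus\{0\}$) then yields joint continuity of $\partial^{|\alpha|}G_\varepsilon/\partial(y^1)^{\alpha_1}\cdots\partial(y^n)^{\alpha_n}$ on $U\times(\mathbb{R}^n\setminus\{0\})$.

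The main obstacle I anticipate is the bookkeeping in the induction for (3): although each step is morally just a chain rule, one has to verify that the denominator appearing in the implicit function theorem formula (the vertical radial derivative of $\zetaastv$) is bounded away from zero uniformly in $(x,\theta)$ on compact sets, and that no $x$-derivatives of $\zetaastv$ ever appear on the right hand side (which is why only continuity in $x$, not smoothness, is needed to get joint continuity of the $y$-partials). Everything else is a transcription of standard convex and implicit-function arguments once Lemmas \ref{condicao para curvatura}, \ref{derivada zeta vertical continua} and \ref{lema propriedade de F_epsilon} are in hand.
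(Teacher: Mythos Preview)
Your proposal is correct and follows essentially the same route as the paper: reduce everything to the implicit function $\phi_\varepsilon$ via \eqref{G_epsilon em coordenadas esfericas}, use Lemma~\ref{derivada zeta vertical continua} for joint continuity of vertical derivatives of $\zetaastv$, and run an induction on $|\alpha|$ in spherical coordinates before converting back to $(y^i)$ by the chain rule.

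One small point worth flagging: for the joint continuity of $\phi_\varepsilon$ in Item~(1) you invoke ``the implicit function theorem'', but the classical $C^1$ version does not apply directly, since $\zetaastv$ is only continuous in $x$. The paper handles this by a bare-hands monotonicity argument: fix $(x_0,\theta_0)$ with $r_0=\phi_\varepsilon(x_0,\theta_0)$, use strict positivity of the radial derivative to get $\zetaastv(x_0,r_0\pm\delta,\theta_0)\gtrless r_U$, and then use joint continuity of $\zetaastv$ to propagate these inequalities to a neighborhood of $(x_0,\theta_0)$. This is exactly the continuous-parameter implicit function argument your sketch implicitly needs, so there is no real gap, but you should be explicit that you are not citing the smooth IFT here. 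Your discussion of the triangle inequality via the Minkowski functional of the convex sublevel set is a detail the paper leaves implicit (it is absorbed into the definition of $G_\varepsilon$ preceding the proposition), and your observation that no $x$-derivatives of $\zetaastv$ appear in the recursion for $D^\alpha_\theta\phi_\varepsilon$ is precisely the point that makes Item~(3) work.
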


\begin{proof}

\

(1) We need to prove that $G_\varepsilon$ is continuous in order to settle Item (1).
In order to prove that $G_\varepsilon$ is continuous in $TU\backslash 0$, we will prove that $\phi_\varepsilon$ defined in (\ref{aplicacao phi_epsilon}) is continuous.

Let $U$ and $W_\theta$ as in the definition of (\ref{aplicacao phi_epsilon}). 
Let $\delta>0$, $(x_0, \theta_0)\in U\times W_\theta$  and $r_0 = \phi_\varepsilon(x_0,\theta_0)$. 
Without loss of generality, we can suppose that $r_0-\delta$, $r_0+\delta$ $\in (1/2,(2r_U)/r_u).$ 
We must find a neighborhood $Z$ of $(x_0,\theta_0)$ such that
\begin{equation} 
\label{phi epsilon}
r_0-\delta < \phi_\varepsilon(x,\theta)  < r_0 +\delta
\end{equation}
for every $(x,\theta) \in Z$.

We know that $\frac{\partial (\zeta_\varepsilon \ast_v F)}{\partial r}(x,r,\theta) > 0$ for every $(x,r,\theta) \in U \times (1/2,(2r_U)/r_u)$ $\times W_\theta.$ Then the function
\[
r\mapsto (\zeta_\varepsilon \ast_v F)(x_0,r,\theta_0), \  r \in (1/2,(2r_U)/r_u)
\]
is strictly increasing.  
As $(\zeta_\varepsilon \ast_v F)(x_0, r_0, \theta_0)=r_U,$ then 
\[
(\zeta_\varepsilon \ast_v F)(x_0, r_0 - \delta,\theta_0)< r_U
\] 
and there exist neighborhoods $Z_1 \subset U$ of $x_0$ and $Z_2 \subset  W_\theta$ of $\theta_0$ such that 
\[
(\zeta_\varepsilon \ast_v F)(x, r_0-\delta, \theta)< r_U,\hspace{5mm} \forall(x,\theta) \in Z_1\times Z_2.
\]
Therefore given $(x,\theta) \in Z_1\times Z_2$ we have that
\[
r_0-\delta < \phi_\varepsilon(x,\theta),
\]
because $(\zeta_\varepsilon \ast_v F) (x,\phi_\varepsilon(x,\theta),\theta)=r_U.$

Analogously there exists a neighborhood $\tilde{Z_1}\times\tilde{Z_2} \subset U\times W_\theta$ of $(x_0,\theta_0)$ such that
\[
\phi_\varepsilon(x,\theta)< r_0 + \delta, \hspace{5mm} \forall (x,\theta) \in \tilde{Z_1}\times \tilde{Z_2}
\]
and $(x,\theta) \in Z=(Z_1\cap \tilde{Z_1}) \times (Z_2\cap \tilde{Z_2})$ satisfies (\ref{phi epsilon}).
Therefore $G_\varepsilon$ is continuous in $TU\backslash 0$.

In order to prove that $G_\varepsilon$ is continuous at the points $(x,0) \in TU$, notice that for every $(x,y) \in TU\backslash 0$ we have
\[
\frac{G_\varepsilon(x,y)}{\|y\|_{\mathbb R^n}}
=\frac{G_\varepsilon(x,r,\theta)}{r}
=\frac{r_U}{\phi_\varepsilon(x,\theta)} \leq 2r_U
\]
due to (\ref{aplicacao phi_epsilon}) and (\ref{G_epsilon em coordenadas esfericas}).
Therefore
\[
G_\varepsilon(x,y) \leq 2r_U \|y\|_{\mathbb R^n},\hspace{5mm} \forall (x,y) \in TU,
\]
what is enough to prove the continuity at $(x,0) \in TU$.

\

(2) It follows from the fact that $\phi_\varepsilon(x,\cdot)$ is smooth.

\

(3) First of all observe that all partial derivatives of $\zeta_\varepsilon \ast_v F$ with respect to $r,\theta^2, \ldots, \theta^n$ are continuous due to Lemma \ref{derivada zeta vertical continua} and the chain rule.

In order to prove that $(G_\varepsilon)_{y^i}(x,y)$, $i=1, \ldots, n$, are continuous in $U\times ({\mathbb R^n}\setminus\{0\})$, it is enough to prove that $(G_\varepsilon)_r(x,r,\theta) $ and $(G_\varepsilon)_{\theta^j}(x,r,\theta) $, $j=2, \ldots, n$, are continuous in $U\times(0,\infty)\times W_\theta$ due to the chain rule. 

We know from the proof of Item (1) that $\phi_\varepsilon(x,\theta)$ is continuous and 
\begin{equation}
\label{g epsilon r}
(G_\varepsilon)_r(x,r,\theta) = \frac{r_U}{\phi_\varepsilon(x,\theta)}
\end{equation}
is continuous in $U\times(0,\infty)\times W_\theta.$ 
In addition
\begin{equation}
\label{g epsilon theta}
(G_\varepsilon)_{\theta^j}(x,r,\theta) 
= - \frac{rr_U}{(\phi_\varepsilon(x,\theta))^2}\frac{\partial \phi_\varepsilon(x,\theta)}{\partial \theta^j},
\end{equation}
where
\begin{equation}
\label{phi epsilon theta j}
\frac{\partial \phi_\varepsilon(x,\theta)}{\partial \theta^j} 
= - \frac{\frac{\partial (\zeta_\varepsilon \ast_v F)(x,\phi_\varepsilon(x,\theta),\theta) }{\partial \theta^j}}{\frac{\partial (\zeta_\varepsilon \ast_v F)(x,\phi_\varepsilon(x,\theta),\theta) }{\partial r}}
\end{equation}
for every $(x,\theta) \in U\times W_\theta$. 
Moreover $\phi_\varepsilon (x, \theta)$ and 
\[
\frac{\partial (\zeta_\varepsilon \ast_v F)(x,\phi_\varepsilon(x,\theta),\theta) }{\partial r}
\]
are different from zero. 
Therefore $(G_\varepsilon)_{\theta^j}(x,r,\theta)$ is continuous in $U \times (0,\infty) \times W_\theta$. 

For the derivatives of higher order, notice that due to the chain rule, all the partial derivatives of $G_\varepsilon$ with respect to $y^1, \ldots, y^n$ are continuous iff all its partial derivatives with respect to $r, \theta^2, \ldots, \theta^n$ are continuous. 
We will prove the continuity of the partial derivatives with respect to the latter variables.

The continuity of the derivatives of $G_\varepsilon$ of order $\vert \alpha \vert$ can be made by induction. 
The continuity of the derivatives of $G_\varepsilon$ of order two follows from (\ref{g epsilon r}), (\ref{g epsilon theta}), (\ref{phi epsilon theta j}) and the fact that they can be written in terms of $\phi_\varepsilon(x,\theta)$ and the derivatives of $(\zeta_\varepsilon \ast_v F)(x,\phi_\varepsilon(x,\theta),\theta)$ of order up to $2$.
Proceeding inductively, we have that the derivatives of $G_\varepsilon$ of order $\vert \alpha \vert$ are given in terms of $\phi_\varepsilon (x,\theta)$ and the derivatives of $(\zeta_\varepsilon \ast_v F)(x,\phi_\varepsilon(x,\theta),\theta)$
of order up to $\vert \alpha\vert$.
\end{proof}

\begin{pro}
\label{g epsilon radialmente convexo}
The function $G_\varepsilon(x,\cdot): T_xU \rightarrow \mathbb R$ is a Minkowski norm for every $x \in U$.
\end{pro}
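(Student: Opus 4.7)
The plan is to invoke Theorem \ref{control minkowski boundary} via the equivalence (1) $\Leftrightarrow$ (3). By Proposition \ref{propriedades da aplicacao G epsilon}, the function $G_\varepsilon(x,\cdot)$ is already smooth away from the origin and positively homogeneous of degree one, so conditions (i) and (ii) of Definition \ref{norma de Minkowski} are verified. What remains is condition (iii), positive-definiteness of $\Hess\bigl(\tfrac{1}{2}G_\varepsilon^2(x,\cdot)\bigr)$; by the theorem this is equivalent to showing that the sphere
\[
S_{G_\varepsilon(x,\cdot)}[0,r_U] = (\zeta_\varepsilon \ast_v F)^{-1}(r_U) \cap T_xU
\]
is locally the graph of a function $\psi$ with $\psi(0)=0$, $d\psi_0 \equiv 0$ and $\Hess\psi$ positive definite. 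Positive $0$-homogeneity of the fundamental tensor then promotes the condition to every nonzero $y \in T_xU$, so it is enough to check it at each $y_0$ on this one sphere.

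Fix such a $y_0$. By items (3) and (4) of Lemma \ref{lema propriedade de F_epsilon}, $y_0$ lies in the interior of $\mathcal A$, and by item (2) the vertical radial derivative of $\zeta_\varepsilon \ast_v F$ is strictly positive there, so $\text{grad}_y(\zeta_\varepsilon \ast_v F)(x,y_0) \neq 0$. I would then choose orthogonal coordinates $(z^1,\ldots,z^n)$ on $T_xU$ centered at $y_0$ with $\partial/\partial z^n$ pointing in the inward normal direction at $y_0$ (opposite to this gradient). The implicit function theorem then produces a smooth local $\psi$ with $\psi(0)=0$, $d\psi_0 \equiv 0$ and
\[
(\zeta_\varepsilon \ast_v F)\bigl(x,z^1,\ldots,z^{n-1},\psi(z^1,\ldots,z^{n-1})\bigr) = r_U,
\]
while the coordinate choice forces $(\zeta_\varepsilon \ast_v F)_{z^n}(x,y_0) < 0$.

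Differentiating the above identity twice at the origin and using $d\psi_0 \equiv 0$, exactly the calculation leading to \eqref{F e phi no kernel} yields, for $i,j\in\{1,\ldots,n-1\}$,
\[
\frac{\partial^2\psi}{\partial z^j \partial z^i}(0) = -\frac{(\zeta_\varepsilon \ast_v F)_{z^j z^i}(x,y_0)}{(\zeta_\varepsilon \ast_v F)_{z^n}(x,y_0)}.
\]
The numerator matrix is the restriction of $\Hess_y(\zeta_\varepsilon \ast_v F)(x,y_0)$ to $\spann\{\partial/\partial z^1,\ldots,\partial/\partial z^{n-1}\}$, hence positive definite by item (1) of Lemma \ref{lema propriedade de F_epsilon}; the denominator is negative. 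Consequently $\Hess\psi(0)$ is positive definite, and Theorem \ref{control minkowski boundary} concludes that $G_\varepsilon(x,\cdot)$ is a Minkowski norm.

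The main technical hurdle was already dispatched in Lemma \ref{lema propriedade de F_epsilon}: the perturbation $u(\varepsilon)\,\eta_{2r_U/r_u}$ built into $\zeta_\varepsilon$ is precisely what upgrades the positive semidefiniteness of $\Hess_y(\eta_\varepsilon \ast_v F)$ (which is all one gets from convexity of $F$ alone) to the strict positive definiteness needed on $\text{int}\,\mathcal A$. Once that ingredient is available, the present proof is essentially a transcription of the sphere-based characterization of Minkowski norms into the implicit function setup.
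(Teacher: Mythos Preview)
Your proposal is correct and follows essentially the same route as the paper: invoke Lemma \ref{lema propriedade de F_epsilon}(1) to get positive definiteness of $\Hess_y(\zeta_\varepsilon \ast_v F)$ on $\mathrm{int}\,\mathcal A \supset S_{G_\varepsilon(x,\cdot)}[0,r_U]$, reproduce the computation of \eqref{esfera local}--\eqref{F e phi no kernel} with $F^2$ replaced by $\zeta_\varepsilon \ast_v F$ to conclude that the sphere is locally the graph of a $\psi$ with positive definite Hessian, and then appeal to Theorem \ref{control minkowski boundary}. The paper's proof is terser but structurally identical; your version simply unpacks the implicit-function step and the sign bookkeeping more explicitly.
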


\begin{proof}
$\Hess_y (\zeta_\varepsilon \ast_v F)(x, \cdot)$ is positive definite in $\mathrm{int} \mathcal A \supset S_{G_\varepsilon}[x,0,r_U]$ due to Lemma \ref{lema propriedade de F_epsilon}.
Proceeding as in (\ref{esfera local}) and (\ref{F e phi no kernel}), with $F^2$ replaced by $\zeta_\varepsilon \ast_v F$ and $r^2$ replaced by $r_U$, we have that $S_{G_\varepsilon}[x,0,r_U]$ can be written locally as graph of a function $\psi: \mathbb R^{n-1} \rightarrow \mathbb R$ such that $\psi(0)=0$, $d\psi_0 = 0$ and $\Hess \psi(0)$ is positive definite.
Therefore $G_\varepsilon(x, \cdot)$ is a Minkowski norm due to Theorem \ref{control minkowski boundary}.
\end{proof}

\begin{pro}
\label{g epsilon converge uniforme}
$G_\varepsilon \rightarrow F $ uniformly on compact subsets of $TU\cong U \times \mathbb R^n$.
\end{pro}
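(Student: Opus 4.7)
The plan is to express both $G_\varepsilon$ and $F$ as radial profiles in local spherical coordinates and to reduce the statement to the uniform convergence of these profiles, which will follow from Lemma \ref{convergencia uniforme de F til} combined with the uniform lower bound $F(x,1,\theta) \geq r_u$ and positive homogeneity.

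First I would introduce the analogue of $\phi_\varepsilon$ for $F$ itself. Since $F$ is positively homogeneous of degree one, $F(x,r,\theta) = r\, F(x,1,\theta)$ in the spherical coordinates (\ref{sistema induzido}), and $F(x,1,\theta) \in [r_u, r_U]$ by (\ref{constante r_U}) and (\ref{constante rho_U}). Hence the equation $F(x,\phi(x,\theta),\theta) = r_U$ has the unique continuous solution $\phi(x,\theta) = r_U / F(x,1,\theta) \in [1, r_U/r_u] \subset (1/2, 2r_U/r_u)$. By (\ref{G_epsilon em coordenadas esfericas}), at radius one we have $G_\varepsilon(x,1,\theta) = r_U/\phi_\varepsilon(x,\theta)$ and $F(x,1,\theta) = r_U/\phi(x,\theta)$, so, since both $\phi_\varepsilon$ and $\phi$ are bounded below by $1/2$, the uniform convergence $G_\varepsilon \to F$ on compact subsets of $U \times S_\theta$ is equivalent to $\phi_\varepsilon \to \phi$ uniformly on compact subsets of $U \times W_\theta$.

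The key estimate is the chain
\[
(\phi_\varepsilon(x,\theta) - \phi(x,\theta))\, F(x,1,\theta) = F(x, \phi_\varepsilon(x,\theta), \theta) - r_U = F(x, \phi_\varepsilon(x,\theta), \theta) - \zetaastv(x, \phi_\varepsilon(x,\theta), \theta).
\]
The right-hand side tends to zero uniformly on compact subsets of $U \times W_\theta$ by Lemma \ref{convergencia uniforme de F til}, because $\phi_\varepsilon(x,\theta) \in (1/2, 2r_U/r_u)$ by construction, so the arguments stay in a fixed compact subset of $TU$. Dividing by $F(x,1,\theta) \geq r_u > 0$ yields the desired uniform convergence of $\phi_\varepsilon$ to $\phi$, and hence of $G_\varepsilon$ to $F$ on $L \times S_\theta$ for any compact $L \subset U$.

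To pass from unit vectors to arbitrary $y$, I would use positive homogeneity: for $y \neq 0$,
\[
|G_\varepsilon(x,y) - F(x,y)| = \|y\| \cdot \left| G_\varepsilon\!\left(x, \tfrac{y}{\|y\|}\right) - F\!\left(x, \tfrac{y}{\|y\|}\right) \right|,
\]
and both sides vanish at $y = 0$. Since $\|y\|$ is bounded on any compact $K \subset TU$ and the Euclidean unit sphere in $\mathbb{R}^n$ is covered by finitely many spherical coordinate patches $S_\theta$, uniform convergence on each patch implies uniform convergence on $K$. There is no serious obstacle beyond the bookkeeping; the one thing to verify is that $\phi_\varepsilon$ remains in the bounded range where the uniform convergence of $\zetaastv$ to $F$ applies, which is precisely guaranteed by Lemma \ref{lema propriedade de F_epsilon}.
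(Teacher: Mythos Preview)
Your proof is correct and follows essentially the same route as the paper's: both hinge on the identity $G_\varepsilon(x,\phi_\varepsilon(x,\theta),\theta) = (\zeta_\varepsilon \ast_v F)(x,\phi_\varepsilon(x,\theta),\theta) = r_U$, the uniform bound $\phi_\varepsilon \in (1/2, 2r_U/r_u)$ from Lemma~\ref{lema propriedade de F_epsilon}, and the uniform convergence of $(\zeta_\varepsilon \ast_v F)$ to $F$ from Lemma~\ref{convergencia uniforme de F til}, together with positive homogeneity. The only organizational difference is that you first prove $\phi_\varepsilon \to \phi$ and then deduce $G_\varepsilon \to F$ on the unit sphere, whereas the paper scales directly to radius $\phi_\varepsilon$ and estimates $|G_\varepsilon - F|$ in one step; the content is the same.
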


\begin{proof}
Let $K_{TU}$ be a compact subset of $TU$.
Let $\tilde r > 0 $ be a constant  such that $\|y\|\leq \tilde r$ for every $(x,y) \in K_{TU}$. 
Consider $\delta>0$. 
Suppose that $K_{TU}$ is contained in a coordinate neighborhood as defined in  (\ref{sistema induzido}) (The general case follows by covering $K_{TU}$ by a finite number of such coordinate neighborhoods).
By Lemma \ref{convergencia uniforme de F til} there exists $\mu > 0$ such that if $\varepsilon \in (0,\mu),$ then
\begin{equation}
\label{converge uniforme proposicao}
|(\zeta_\varepsilon \ast_v F)(x, r,\theta)-F(x, r,\theta)|<  \frac{\delta}{2\tilde r}
\end{equation}
for every $(x, r,\theta) \in K_{TU}$.  
Then
\begin{eqnarray*}  
|G_\varepsilon (x,r,\theta)-F (x,r,\theta)| 
& = & \left|G_\varepsilon \left(x,r\frac{\phi_\varepsilon(x,\theta)}{\phi_\varepsilon(x,\theta)},\theta\right)
- F \left(x,r\frac{\phi_\varepsilon(x,\theta)}{\phi_\varepsilon(x,\theta)},\theta \right) \right| \\
& = & r  \frac{ \left|G_\varepsilon \left(x, {\phi_\varepsilon(x,\theta)},\theta \right)- F \left(x, {\phi_\varepsilon(x,\theta)},\theta\right) \right|}{\phi_\varepsilon(x,\theta)} \\
& = & r  \frac{ \left|(\zeta_\varepsilon \ast_v F) \left(x, {\phi_\varepsilon(x,\theta)},\theta \right)- F \left(x, {\phi_\varepsilon(x,\theta)},\theta \right) \right|}{\phi_\varepsilon(x,\theta )} \\
& < & \frac{r}{\phi_\varepsilon(x,\theta)} \frac{\delta}{2\tilde r} 
< \delta
\end{eqnarray*}
for every $\varepsilon \in (0,\mu)$ and $(x,r,\theta) \in K_{TU}$ due to (\ref{converge uniforme proposicao}) and the fact that $\phi_\varepsilon(x,\theta)\in (1/2,(2r_U)/r_u)$. 
Thus $G_\varepsilon \rightarrow F$ uniformly on $K_{TU}$.
\end{proof}

\section{The mollifier smoothing of $F$}
\label{secao mollifier smoothing F}

In this section we define the mollifier smoothing $F_\varepsilon:TM\rightarrow {\mathbb R}$ of $F$ and we study its uniform convergence to $F$ as $\varepsilon$ goes to zero.

Let 
\begin{equation}
\label{define U lambda}
\{(U_\lambda,(x^i)_\lambda)\}_{\lambda\in\Lambda}
\end{equation} 
be a locally finite differentiable structure on $M$  such that $U_\lambda$ is defined as $U$ in (\ref{definindo U}). 
Let $\{\varphi_\lambda\}_{\lambda\in \Lambda}$ be a differentiable partition of unity on $M$ subordinate to the covering  $\{U_\lambda\}_{\lambda\in\Lambda}$ of $M.$ For each $\lambda \in \Lambda,$ denote $V_\lambda=\mathrm{int}(\supp(\varphi_\lambda))$ and $\nu_\lambda=\dist_{{\mathbb R^n}}(\partial V_\lambda,\partial U_\lambda).$

Fix $\lambda \in \Lambda$.  Let $r_{U_\lambda}$, $r_{u_\lambda}$, $u_\lambda$ and $\tau_\lambda$ as defined in (\ref{constante r_U}), (\ref{constante rho_U}), (\ref{define-u}) and (\ref{define-tau}) respectively. Let $\delta_\lambda = \min\{\tau_\lambda,\nu_\lambda\}.$ For every $\varepsilon \in (0,\delta_\lambda)$  consider the function
\[
G_{\varepsilon,\lambda}:TU_\lambda \rightarrow {\mathbb R}
\] 
defined as in (\ref{aplicacao G_epsilon}).
$G_{\varepsilon,\lambda}$ is continuous due to Proposition \ref{propriedades da aplicacao G epsilon} and we can define the function $\check F_{\varepsilon,\lambda}$ by
\[
\check F^2_{\varepsilon,\lambda}(x,y)=  \int \eta_{\varepsilon}(x-z) G^2_{\varepsilon,\lambda}(z,y) dz, \hspace{5mm}\forall (x,y) \in TO_{\varepsilon,\lambda},
\]
where $O_{\varepsilon, \lambda}$ is an open subset of $U_\lambda$ containing $\overline{V_\lambda}$ such that $\check F^2_{\varepsilon, \lambda}$ is well defined.
In order to make the variation of $\varepsilon$ independent of $\lambda,$ define $\psi_\lambda:(0,1) \rightarrow (0,\delta_\lambda)$ by $\psi_\lambda(\varepsilon)=\varepsilon\delta_\lambda$ and set
\begin{equation} 
\label{F check lambda}
F^2_{\varepsilon,\lambda}(x,y)=  \int \eta_{\psi_\lambda(\varepsilon )}(x-z) G^2_{\psi_\lambda(\varepsilon),\lambda}(z,y) dz, 
\end{equation}
where $\varepsilon \in (0,1)$ and $(x,y) \in TO_{\varepsilon, \lambda}$.
Thus, for each $\varepsilon\in (0,1),$ we can define the function $F_\varepsilon:TM\rightarrow [0,\infty)$ by
\begin{equation} 
\label{F epsilon definida em TM}
F^2_\varepsilon(x,y) = \sum_{\lambda \in \Lambda} \varphi_\lambda(x) F^2_{\varepsilon,\lambda}(x,y).
\end{equation}

\begin{defi}
The one parameter family of applications $F_\varepsilon$ given by (\ref{F epsilon definida em TM}) is a {\em mollifier smoothing} of $F$.
\end{defi}

\begin{lem} 
\label{propriedades da aplicacao f epsilon}

\

\begin{enumerate}
\item $F_{\varepsilon,\lambda}$ is a Finsler structure on $O_{\varepsilon,\lambda};$
\item $F_{\varepsilon,\lambda} \rightarrow F$ uniformly on compact subsets $K_{\overline{TV_\lambda}}$ of $\overline{TV_\lambda}$.  
\end{enumerate}
\end{lem}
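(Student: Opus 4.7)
The plan is to deduce both items from the construction of $F^2_{\varepsilon,\lambda}$ as a horizontal mollification of $G^2_{\psi_\lambda(\varepsilon),\lambda}$, leveraging the properties of $G_{\varepsilon,\lambda}$ established in Propositions \ref{propriedades da aplicacao G epsilon} and \ref{g epsilon radialmente convexo} together with the uniform convergence $G_\varepsilon \to F$ from Proposition \ref{g epsilon converge uniforme}.

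For Item (1), the first step is to verify that $F^2_{\varepsilon,\lambda}$ is smooth on $TO_{\varepsilon,\lambda} \setminus 0$. The $y$-smoothness follows by differentiating under the integral sign, using that $G^2_{\psi_\lambda(\varepsilon),\lambda}(z,\cdot)$ is smooth off the zero section (Proposition \ref{propriedades da aplicacao G epsilon}(2)) and that $\eta_{\psi_\lambda(\varepsilon)}(x-\cdot)$ has compact support. The $x$-smoothness, and the continuity of mixed partial derivatives, follows by adapting the argument of Lemma \ref{derivada zeta vertical continua} to the horizontal direction, since $G^2_{\psi_\lambda(\varepsilon),\lambda}$ and all its $y$-derivatives are continuous on $TU_\lambda \setminus 0$ by Proposition \ref{propriedades da aplicacao G epsilon}(3). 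Next I would check that $F_{\varepsilon,\lambda}(x,\cdot)$ is a Minkowski norm for each $x \in O_{\varepsilon,\lambda}$: positive $2$-homogeneity in $y$ is inherited from $G^2_{\psi_\lambda(\varepsilon),\lambda}$ because the convolution acts only in the $x$-variable; strict positivity for $y \neq 0$ follows since $G_{\psi_\lambda(\varepsilon),\lambda}(z,y) > 0$ pointwise and $\eta_{\psi_\lambda(\varepsilon)} \geq 0$ is nontrivial; and positive definiteness of the $y$-Hessian of $\tfrac{1}{2} F^2_{\varepsilon,\lambda}$ follows from passing the Hessian under the integral sign and using that $\Hess_y(\tfrac{1}{2} G^2_{\psi_\lambda(\varepsilon),\lambda})(z,\cdot)$ is positive definite at each $z$, by Proposition \ref{g epsilon radialmente convexo} and Theorem \ref{control minkowski boundary}.

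For Item (2), I would split
\begin{align*}
F^2_{\varepsilon,\lambda}(x,y) - F^2(x,y)
&= \int \eta_{\psi_\lambda(\varepsilon)}(x-z)\left[G^2_{\psi_\lambda(\varepsilon),\lambda}(z,y) - F^2(z,y)\right] dz \\
&\quad + \left[(\eta_{\psi_\lambda(\varepsilon)} \ast_1 F^2)(x,y) - F^2(x,y)\right].
\end{align*}
For a compact set $K \subset \overline{TV_\lambda}$, the variables $z$ appearing in the integral remain in a fixed compact subset of $TU_\lambda$ for all sufficiently small $\varepsilon$, so the first summand tends to zero uniformly on $K$ by Proposition \ref{g epsilon converge uniforme}, using the identity $|G^2 - F^2| = |G - F|\cdot(G + F)$ together with the boundedness of $G + F$ on compact sets. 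The second summand tends to zero uniformly on $K$ by Proposition \ref{funcao convergencia uniforme}. Consequently $F^2_{\varepsilon,\lambda} \to F^2$ uniformly on $K$, and since the square root is uniformly continuous on bounded subsets of $[0,\infty)$, this yields $F_{\varepsilon,\lambda} \to F$ uniformly on $K$.

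The main obstacle I expect is the smoothness verification in Item (1): the $y$-derivatives of $G^2_{\psi_\lambda(\varepsilon),\lambda}(z,y)$ depend on $z$ in a nontrivial way through $\phi_{\psi_\lambda(\varepsilon)}$, and one must carefully combine Lemma \ref{derivada zeta vertical continua} with Proposition \ref{propriedades da aplicacao G epsilon}(3) to iterate derivatives through the horizontal convolution without losing regularity.
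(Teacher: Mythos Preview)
Your proposal is correct and follows essentially the same approach as the paper's proof: the paper establishes Item (1) by invoking Proposition \ref{propriedades da aplicacao G epsilon}, the smoothness of $\eta_{\psi_\lambda(\varepsilon)}$, and Proposition \ref{g epsilon radialmente convexo} for the positive definiteness of $\Hess_y F^2_{\varepsilon,\lambda}$, and it establishes Item (2) via exactly your decomposition into $(\eta_{\psi_\lambda(\varepsilon)} \ast_1 F^2) - F^2$ and the integral of $G^2_{\psi_\lambda(\varepsilon),\lambda} - F^2$, applying Propositions \ref{funcao convergencia uniforme} and \ref{g epsilon converge uniforme} respectively, then passing to $F_{\varepsilon,\lambda}$ via uniform continuity of the square root. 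Your write-up in fact supplies more detail than the paper does (e.g., the factorization $|G^2 - F^2| = |G-F|(G+F)$ and the explicit discussion of mixed derivatives in Item (1)), but the underlying strategy is identical.
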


\begin{proof}

\

(1) $F_{\varepsilon, \lambda}$ is smooth on $TO_{\varepsilon, \lambda} \backslash 0$ due to (\ref{F check lambda}), Proposition \ref{propriedades da aplicacao G epsilon} and the fact that $\eta_{\psi_\lambda(\varepsilon)}$ is smooth.
The positive homogeneity of $ F_{\varepsilon,\lambda}(x, \cdot)$ follows from the positive homogeneity of $G_{\psi_\lambda(\varepsilon),\lambda}(x,\cdot)$.
The positive definiteness of $\Hess_y F_{\varepsilon, \lambda}^2$ is direct consequence of Proposition \ref{g epsilon radialmente convexo} and (\ref{F check lambda}).

\

(2) It is enough to prove that given $\delta>0$, there exist $\beta>0$ such that if $\varepsilon  \in (0,\beta),$ then
\begin{equation}
|F^2_{\varepsilon,\lambda}(x,y)-F^2(x,y)|<\delta \label{menor que delta}
\end{equation}
for every $(x,y) \in K_{\overline{TV_\lambda}}$.
In fact, in this case, there exist $c>0$ and $\beta^\prime \in (0,1)$ such that 
\[
F^2_{\varepsilon,\lambda}(x,y), F^2(x,y) < c
\] 
for every $(x,y) \in K_{\overline{TV_\lambda}}$ and $\varepsilon \in (0,\beta^\prime)$ (This is also true for $\varepsilon \in (0,1)$ but this fact isn't necessary here).
But the square root function is uniformly continuous on $[0,c]$.
Consequently for every $\mu >0$, there exist a $\delta >0$ such that
\begin{equation}
\label{menor que mu}
|F_{\varepsilon,\lambda}(x,y)-F(x,y)| < \mu
\end{equation}
whenever $|F^2_{\varepsilon,\lambda}(x,y)-F^2(x,y)|<\delta$. 
For this $\delta$, we can choose $\beta \in (0,\beta^\prime)$ such that (\ref{menor que delta}) is satisfied whenever $\varepsilon \in (0,\beta)$.
Therefore, for every $\mu >0$, (\ref{menor que mu}) is satisfied whenever $\varepsilon \in (0,\beta)$ and $(x,y) \in K_{\overline{TV_\lambda}}$.

Now let us prove (\ref{menor que delta}).
Set $(\eta_{\psi_\lambda(\varepsilon)} \ast_1 F^2)(x,y)= \int  \eta_{\psi_\lambda(\varepsilon)}(x-z) F^2(z,y)dz$.  
By Proposition \ref{funcao convergencia uniforme}, there exists $\beta_1>0$ such that if $\varepsilon \in (0,\beta_1),$ then
\begin{equation} 
\label{F check}
|(\eta_{\psi_\lambda(\varepsilon)} \ast_1 F^2)(x,y) - F^2(x,y)|< \frac{\delta}{2}
\end{equation}
for every $(x,y) \in K_{\overline{TV_\lambda}}$. Moreover, as a consequence of Proposition \ref{g epsilon converge uniforme}, there exists $\beta_2>0$ such that if $\varepsilon \in (0,\beta_2),$ then
\begin{equation} 
\label{G epsilon uniforme para F}
|G^2_{\psi_\lambda(\varepsilon),\lambda}(z,y) - F^2(z,y)|< \frac{\delta}{2},
\end{equation}
for every $(z,y) \in K_{\overline{TV_\lambda}}$.
Set $\beta = \min\{\beta_1,\beta_2\}$. 
Given $\varepsilon \in (0,\beta)$  we obtain 
\begin{eqnarray}
& & |F^2_{\varepsilon,\lambda}(x,y) - F^2(x,y)| \label{demonstracao F2 converge uniforme} \\
& \leq & |F^2_{\varepsilon,\lambda}(x,y)-(\eta_{\psi_\lambda(\varepsilon)} \ast_1 F^2)(x,y)|+|(\eta_{\psi_\lambda(\varepsilon)} \ast_1 F^2)(x,y) - F^2(x,y)| \nonumber \\
& < & \left|\int \eta_{\psi_\lambda(\varepsilon)}(x-z) [G^2_{\psi_\lambda(\varepsilon),\lambda}(z,y)-F^2(z,y)] dz\right| + \frac{\delta}{2} \nonumber \\ 
& < & \frac{\delta}{2} \int \eta_{\psi_\lambda(\varepsilon) }(x-z) dz + \frac{\delta}{2} = \delta \nonumber
\end{eqnarray}
for every $(x,y) \in K_{\overline{TV_\lambda}}$
due to (\ref{F check}) and (\ref{G epsilon uniforme para F}).
Therefore $F_{\varepsilon,\lambda}(x,y)$ converges uniformly to $F(x,y)$ on $K_{\overline{TV_\lambda}}$.
\end{proof} 

\begin{teo} 
\label{propriedades da aplicacao F epsilon definida em TM} 
A mollifier smoothing $F_\varepsilon: TM \rightarrow \mathbb R$ of a $C^0$-Finsler structure $F$ satisfies the following properties.
\begin{enumerate}
\item $ {F}_{\varepsilon }$ is a Finsler structure on $M$ for every $\varepsilon \in (0,1)$;
\item $ {F}_{\varepsilon } \rightarrow F$ uniformly on compact subsets of $TM$ as $\varepsilon$ converges to zero.
\end{enumerate}
\end{teo}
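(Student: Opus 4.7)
The plan is to deduce both claims from the corresponding local statements already established in Lemma \ref{propriedades da aplicacao f epsilon}, exploiting the fact that a partition-of-unity combination of squared Finsler structures is again the square of a Finsler structure, and that compactness plus local finiteness reduces everything to a finite sum.

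For Item (1), I would verify the three defining properties of a Finsler structure in turn. \emph{Smoothness on $TM\setminus 0$:} since $\mathrm{supp}(\varphi_\lambda) \subset V_\lambda$ and $\overline{V_\lambda} \subset O_{\varepsilon,\lambda}$, the product $\varphi_\lambda\, F^2_{\varepsilon,\lambda}$ extends by zero to a smooth function on $TM$; by local finiteness of $\{V_\lambda\}$ the sum $F^2_\varepsilon$ is smooth on all of $TM$, and since $F^2_\varepsilon>0$ off the zero section (by property (c) below), $F_\varepsilon = \sqrt{F^2_\varepsilon}$ is smooth on $TM\setminus 0$. \emph{Positive homogeneity:} each $F_{\varepsilon,\lambda}(x,\cdot)$ is positively homogeneous of degree one by Lemma \ref{propriedades da aplicacao f epsilon}(1), hence so is $F_\varepsilon(x,\cdot)$ after taking square roots of the identity $F^2_\varepsilon(x,\mu y)=\mu^2 F^2_\varepsilon(x,y)$ for $\mu>0$. \emph{Positive definiteness of the vertical Hessian of $F^2_\varepsilon$:} at any $x\in M$ at least one $\varphi_\lambda(x)>0$, and
\[
\frac{\partial^2 F^2_\varepsilon}{\partial y^i\partial y^j}(x,y) \;=\; \sum_{\lambda\in\Lambda} \varphi_\lambda(x)\,\frac{\partial^2 F^2_{\varepsilon,\lambda}}{\partial y^i\partial y^j}(x,y)
\]
is a non-negative linear combination of positive-definite matrices with at least one strictly positive coefficient, hence positive definite.

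For Item (2), fix a compact $K\subset TM$ and $\delta>0$. The projection $\pi(K)\subset M$ is compact, so by local finiteness of $\{V_\lambda\}$ there is a \emph{finite} subset $\Lambda_K\subset\Lambda$ of indices with $\mathrm{supp}(\varphi_\lambda)\cap\pi(K)\neq\emptyset$. For each such $\lambda$, the set $K_\lambda := K\cap (\overline{V_\lambda}\times\mathbb R^n)$ is a compact subset of $\overline{TV_\lambda}$, so by Lemma \ref{propriedades da aplicacao f epsilon}(2) there is $\beta_\lambda>0$ with $|F^2_{\varepsilon,\lambda}-F^2|<\delta$ on $K_\lambda$ for every $\varepsilon\in(0,\beta_\lambda)$. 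Taking $\beta=\min_{\lambda\in\Lambda_K}\beta_\lambda>0$ and using $\sum_\lambda\varphi_\lambda\equiv 1$, we obtain on $K$
\[
|F^2_\varepsilon(x,y)-F^2(x,y)| \;\leq\; \sum_{\lambda\in\Lambda_K}\varphi_\lambda(x)\,\bigl|F^2_{\varepsilon,\lambda}(x,y)-F^2(x,y)\bigr| \;<\; \delta
\]
for every $\varepsilon\in(0,\beta)$. Uniform convergence of $F^2_\varepsilon$ to $F^2$ on $K$ then transfers to uniform convergence of $F_\varepsilon$ to $F$ by the uniform continuity of the square-root function on a compact interval $[0,c]$ containing the values of $F^2$ and $F^2_\varepsilon$ on $K$, exactly as in the argument at the start of the proof of Lemma \ref{propriedades da aplicacao f epsilon}(2).

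There is no substantial obstacle in this step: the hard analytic work is already done in Sections \ref{section vertical smoothing} and the preceding lemma, namely the construction of $G_{\varepsilon,\lambda}$, the horizontal convolution producing a bona fide Finsler structure $F_{\varepsilon,\lambda}$, and the local uniform convergence. The only thing to watch is that each factor $\varphi_\lambda F^2_{\varepsilon,\lambda}$ is evaluated only where $F^2_{\varepsilon,\lambda}$ is defined, which is guaranteed by the chain of inclusions $\mathrm{supp}(\varphi_\lambda)\subset V_\lambda\subset \overline{V_\lambda}\subset O_{\varepsilon,\lambda}$ built into the definition together with the rescaling $\psi_\lambda(\varepsilon)=\varepsilon\delta_\lambda$.
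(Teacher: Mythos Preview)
Your proposal is correct and follows essentially the same approach as the paper's proof: both reduce to the local Lemma \ref{propriedades da aplicacao f epsilon} via the partition of unity, verify the three Finsler-structure axioms for Item (1), and for Item (2) use local finiteness to make the sum finite, establish uniform convergence of $F^2_\varepsilon$ to $F^2$, and then pass to $F_\varepsilon \to F$ by uniform continuity of the square root. Your write-up is in fact somewhat more explicit about the support condition $\mathrm{supp}(\varphi_\lambda)\subset \overline{V_\lambda}\subset O_{\varepsilon,\lambda}$ ensuring each summand is globally defined, which the paper leaves implicit.
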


\begin{proof}

\
 
(1) The functions $\varphi_\lambda:M \rightarrow {\mathbb R}$ and $F_{\varepsilon,\lambda}: TO_{\varepsilon, \lambda} \setminus 0 \rightarrow {\mathbb R}$ are smooth for every $\lambda \in \Lambda$ and $\varepsilon \in (0,1)$. 
Therefore $\varphi_\lambda F_{\varepsilon,\lambda}: TM\backslash 0 \rightarrow \mathbb R$ is smooth for every $\lambda$ what implies that $F_\varepsilon : TM\backslash 0 \rightarrow \mathbb R$ is smooth.
The positive homogeneity of $F_\varepsilon$ is straightforward.
Finally it is easy to see that $\Hess_y F^2_\varepsilon(x, \cdot)$ is positive definite for every $x \in M$ due to (\ref{F epsilon definida em TM}).

\

(2) Let $K_{TM}$ be a compact subset of $TM$ and consider $(x,y) \in K_{TM}$. 
Then
\begin{eqnarray}
\left| F_\varepsilon^2(x,y) - F^2(x,y)\right| & = & \left| \sum_{\lambda \in \Lambda} \varphi_\lambda(x) F_{\varepsilon,\lambda}^2 (x,y)  - \sum_{\lambda \in \Lambda} \varphi_\lambda(x) F^2(x,y) \right| \nonumber \\
& \leq &   \sum_{\lambda \in \Lambda} \varphi_\lambda(x) \left| F_{\varepsilon,\lambda}^2(x,y) - F^2(x,y) \right|. \label{limita F epsilon menos F}
\end{eqnarray}
$\pi(K_{TM})$ is covered by a finite number of open subsets $V_\lambda$ and the sum (\ref{limita F epsilon menos F}) is finite when restricted to $K_{TM}$.
The function $\vert F_{\varepsilon,\lambda}^2(x,y) - F^2(x,y) \vert$ converges uniformly to zero on $K_{TM} \cap\overline{ TV_\lambda}$ as $\varepsilon$ goes to zero due (\ref{menor que delta}), what implies that the left hand side of (\ref{limita F epsilon menos F}) has the same convergence as well.
Now proceeding as in the proof of Item (2) of Lemma \ref{propriedades da aplicacao f epsilon}, this item is settled.
\end{proof}

\section{Mollifier smoothing of Finsler structures}
\label{secao smoothing finsler}

In this section we assume that $ F: TM \rightarrow [0, \infty)$  is a Finsler structure and we study several convergence results related to $G_{\psi_\lambda(\varepsilon), \lambda}$, $F_{\varepsilon,\lambda}$, $F_\varepsilon$ and $F$ in coordinate systems.

The next lemma is a version of Lemmas \ref{lema propriedade de F_epsilon} and \ref{convergencia uniforme de F til} when $F$ is  smooth in $TM\setminus 0$.
 
\begin{lem}
\label{F epsilon diferenciavel} 
Let $(M,F)$ be a Finsler manifold and $U$ be an open subset of $M$ satisfying (\ref{definindo U}). 
Then $(\zeta_\varepsilon \ast_v F) :TU\rightarrow {\mathbb R}$ has the following properties for every $\varepsilon \in \left(0, \tau\right):$
\begin{enumerate}
\item $(\zeta_\varepsilon \ast_v F)$ is smooth on $TU\setminus 0;$
\item $(\zeta_\varepsilon \ast_v F)^{-1}(r_U)$ is a smooth hypersurface of TU, where $r_U$ is defined as (\ref{constante r_U});
\item Let $\alpha=(\alpha_1,\ldots,\alpha_n,\alpha_{n+1},\ldots,\alpha_{2n})$ be a multiindex. Then
\[
D^\alpha (\zeta_\varepsilon \ast_v F) \rightarrow D^\alpha F
\] 
uniformly on compact subsets of $TU\setminus 0$.
\end{enumerate}
\end{lem}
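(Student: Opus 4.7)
The plan is to prove the three items by exploiting the two equivalent expressions
\[
(\zeta_\varepsilon \ast_v F)(x,y) \;=\; \int \zeta_\varepsilon(z)\, F(x,y-z)\, dz \;=\; \int \zeta_\varepsilon(y-w)\, F(x,w)\, dw,
\]
together with the structural fact that any pure $x$-derivative of $F$ is positively homogeneous of degree $1$ in $y$ and hence extends continuously to $y=0$ with value zero.

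For Item (1), I would differentiate under the integral sign, placing $y$-derivatives onto $\zeta_\varepsilon$ via the second form (since $\zeta_\varepsilon$ is smooth with compact support) and $x$-derivatives onto $F$ via the first form (since each $D^{\alpha_x}F$ is continuous on $TU$, by the homogeneity remark above). Combining the two strategies gives the mixed derivative
\[
D^\alpha (\zeta_\varepsilon \ast_v F)(x,y) \;=\; \int (D^{\alpha_y} \zeta_\varepsilon)(y-w)\, (D^{\alpha_x} F)(x,w)\, dw,
\]
whose integrand is jointly continuous with compact $w$-support, so the right-hand side is continuous in $(x,y)$. This yields smoothness on $TU\setminus 0$. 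Item (2) is then immediate: Lemma \ref{lema propriedade de F_epsilon}(2) supplies strict positivity of the vertical radial derivative of $(\zeta_\varepsilon \ast_v F)$ on $\mathcal A$, while Items (3)--(4) of the same lemma force the level set $(\zeta_\varepsilon \ast_v F)^{-1}(r_U)$ to lie in $\mathrm{int}\,\mathcal A$; so $r_U$ is a regular value and its preimage is a smooth hypersurface by the regular value theorem.

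For Item (3), I would decompose $\zeta_\varepsilon = (1-u(\varepsilon))\eta_\varepsilon + u(\varepsilon)\eta_{2r_U/r_u}$ and treat the two pieces separately on a compact $K\subset TU\setminus 0$ with $\delta := \min_K \|y\| > 0$. For the standard-mollifier piece, once $\varepsilon < \delta/2$ the integration domain in $\int \eta_\varepsilon(z) F(x,y-z)\,dz$ remains inside the smooth locus $\{y\neq 0\}$, so all derivatives can be pushed onto $F$ to give $(\eta_\varepsilon \ast_v D^\alpha F)$; Proposition \ref{funcao convergencia uniforme} applied to $D^\alpha F$ on a suitable neighborhood of $K$ in $TU\setminus 0$ then yields uniform convergence to $D^\alpha F$. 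For the fixed-scale piece, the asymmetric differentiation identity above gives
\[
D^\alpha \bigl(u(\varepsilon)\,\eta_{2r_U/r_u} \ast_v F\bigr)(x,y) \;=\; u(\varepsilon) \int (D^{\alpha_y} \eta_{2r_U/r_u})(y-w)\, (D^{\alpha_x} F)(x,w)\, dw,
\]
and the integral is uniformly bounded on $K$ because $D^{\alpha_x}F$ is continuous on $U \times \mathbb R^n$ and the $w$-integration is over a fixed ball. Since $u(\varepsilon)\to 0$, this contribution vanishes uniformly, and the two pieces together give the claim.

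The main obstacle is precisely the fixed-scale summand $u(\varepsilon)\,\eta_{2r_U/r_u}$ inside $\zeta_\varepsilon$: its support does not shrink with $\varepsilon$, so the standard mollifier heuristic (integration region collapsing onto $(x,y)$) fails, and the convolution can sweep across the singular set $\{y=0\}$ where $F$ is only continuous. The device that rescues the argument is the asymmetric distribution of derivatives---$y$-derivatives onto the smooth kernel, $x$-derivatives onto $F$---which keeps the integrand continuous even near $w=0$, so that the weight $u(\varepsilon)\to 0$ can be used to dispose of this entire term.
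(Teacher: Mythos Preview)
Your argument is correct, and it is in the same spirit as the paper's proof, which consists of the single sentence ``It is a direct consequence of Lemmas~\ref{lema propriedade de F_epsilon} and~\ref{convergencia uniforme de F til}.'' Your write-up supplies precisely the details that this one-line proof leaves to the reader. In particular, for Item~(3) the paper evidently intends the reader to rerun the proof of Lemma~\ref{convergencia uniforme de F til} (i.e.\ Proposition~\ref{funcao convergencia uniforme}) with $F$ replaced by $D^\alpha F$, and for Items~(1)--(2) to combine smoothness of the convolution with the regular-value information already contained in Lemma~\ref{lema propriedade de F_epsilon}. Your treatment does exactly this, with one refinement worth highlighting: you correctly isolate the fixed-scale summand $u(\varepsilon)\,\eta_{2r_U/r_u}$ as the only place where the naive ``push all derivatives onto $F$'' argument breaks down (because its support may meet $\{y=0\}$), and you dispose of it cleanly via the asymmetric differentiation identity together with the homogeneity observation that pure $x$-derivatives of $F$ extend continuously across $y=0$. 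This is the honest content behind the paper's ``direct consequence,'' and your version makes it explicit.
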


\begin{proof}

It is a direct consequence of Lemmas \ref{lema propriedade de F_epsilon} and \ref{convergencia uniforme de F til}.
\end{proof}

Now we study $G_\varepsilon$ in the case where $F$ is a Finsler structure. 
Let $(x_0,y_0) \in TU$ with $y_0 \in S_{G_\varepsilon}[x_0, 0, r_U]$ and $(r_0,\theta_0^2,\ldots,\theta_0^n)$ be the coordinates of $y_0$ in a coordinate system defined by (\ref{sistema induzido}). By Item (2) of Lemma \ref{F epsilon diferenciavel} and the implicit function theorem, the function
\begin{equation} 
\label{phi epsilon caso diferenciavel}
\phi_\varepsilon: U \times W_\theta \rightarrow \left(\frac{1}{2},\frac{2r_U}{r_u}\right)
\end{equation}
defined as in (\ref{aplicacao phi_epsilon})
is smooth.

Therefore given $(x,r,\theta^2,\ldots,\theta^{n }) \in U\times(0,\infty) \times W_\theta,$ we have that
\begin{equation} 
\label{G epsilon caso diferenciavel} 
G_\varepsilon (x,r,\theta ) = \frac{r r_U}{\phi_\varepsilon(x, \theta)}
\end{equation}
is a {\em Finsler structure}.

Proceeding likewise with $F$ we obtain the respective smooth function
\[
\phi : U \times W_\theta \rightarrow \left(\frac{1}{2},\frac{2r_U}{r_u}\right)
\]
such that
\begin{equation} 
\label{F caso diferenciavel} 
F (x,r,\theta) = \frac{r r_U}{\phi (x, \theta)}.
\end{equation}


\begin{lem} 
\label{convergencia phi epsilon} 
Let $K_{U\times W_\theta}$ be a compact subset of $U \times W_\theta$. 
The function $\phi_\varepsilon$ defined in (\ref{phi epsilon caso diferenciavel}) has the following properties:
\begin{enumerate}
\item $\frac{1}{\phi_\varepsilon } \rightarrow \frac{1}{\phi }$ uniformly on $K_{U\times W_\theta}$;
\item $ \phi_\varepsilon  \rightarrow \phi $ uniformly on $K_{U\times W_\theta}$; 
\item Let $\alpha=(\alpha_1,\ldots,\alpha_n,\alpha_{n+2},\ldots,\alpha_{2n})$ be a multiindex. Then
\[
D^\alpha \phi_\varepsilon = \frac{\partial^{|\alpha|}\phi_\varepsilon }{\partial(x^1)^{\alpha_1}\cdots \partial(x^n)^{\alpha_n} \partial(\theta^2)^{\alpha_{n+2}} \cdots \partial(\theta^n)^{\alpha_{2n}} }
\]
converges uniformly to
\[
 D^\alpha \phi = \frac{\partial^{|\alpha|}\phi  }{\partial(x^1)^{\alpha_1}\cdots \partial(x^n)^{\alpha_n} \partial(\theta^2)^{\alpha_{n+2}} \cdots \partial(\theta^n)^{\alpha_{2n}} }  
\] 
on $K_{U\times W_\theta}$.
\end{enumerate}
\end{lem}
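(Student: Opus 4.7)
The plan is to handle the three items in sequence. For Items (1) and (2) the driving input is Proposition \ref{g epsilon converge uniforme}. Reading off (\ref{G epsilon caso diferenciavel}) and (\ref{F caso diferenciavel}) at $r=1$ gives the identities $G_\varepsilon(x,1,\theta)=r_U/\phi_\varepsilon(x,\theta)$ and $F(x,1,\theta)=r_U/\phi(x,\theta)$. The map $(x,\theta)\mapsto (x,y(x,\theta))$, where $y(x,\theta)$ is the Euclidean unit vector determined by $\theta$, sends $K_{U\times W_\theta}$ to a compact subset of $TU$ on which $G_\varepsilon\to F$ uniformly, so $1/\phi_\varepsilon\to 1/\phi$ uniformly and Item (1) holds. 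Since $\phi_\varepsilon,\phi\in (1/2,2r_U/r_u)$ are uniformly bounded above and away from zero, Item (2) follows by reciprocation.

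For Item (3) I would proceed by induction on $|\alpha|$, the base case $|\alpha|=0$ being Item (2). For the inductive step, I repeatedly differentiate the defining identity
\[
(\zeta_\varepsilon \ast_v F)(x,\phi_\varepsilon(x,\theta),\theta)=r_U
\]
using the chain and product rules. A standard induction then yields a representation
\[
D^\alpha \phi_\varepsilon(x,\theta)=\frac{N_\alpha^\varepsilon(x,\theta)}{\bigl[\partial_r(\zeta_\varepsilon \ast_v F)(x,\phi_\varepsilon(x,\theta),\theta)\bigr]^{k(\alpha)}},
\]
where $N_\alpha^\varepsilon$ is a polynomial in the partial derivatives of $(\zeta_\varepsilon \ast_v F)$ of order at most $|\alpha|$ (evaluated at the moving point $(x,\phi_\varepsilon(x,\theta),\theta)$) and in the lower-order derivatives $D^\beta \phi_\varepsilon$ with $|\beta|<|\alpha|$. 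The analogous formula, with $(\zeta_\varepsilon \ast_v F)$ and $\phi_\varepsilon$ replaced by $F$ and $\phi$, gives $D^\alpha \phi$.

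To complete the inductive step I check three ingredients on $K_{U\times W_\theta}$. First, since $\phi_\varepsilon\in (1/2,2r_U/r_u)$, the set $\{(x,\phi_\varepsilon(x,\theta),\theta):(x,\theta)\in K_{U\times W_\theta},\,\varepsilon\in (0,\tau)\}$ is contained in a fixed compact subset of $TU\setminus 0$; by Lemma \ref{F epsilon diferenciavel}(3), $D^\beta(\zeta_\varepsilon \ast_v F)\to D^\beta F$ uniformly there, and combining this with the uniform convergence $\phi_\varepsilon\to \phi$ and the uniform continuity of $D^\beta F$ on compact sets yields uniform convergence of $D^\beta(\zeta_\varepsilon \ast_v F)$ evaluated at the moving point $(x,\phi_\varepsilon(x,\theta),\theta)$ to $D^\beta F$ evaluated at $(x,\phi(x,\theta),\theta)$. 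Second, the inductive hypothesis gives uniform convergence of each lower-order derivative $D^\beta \phi_\varepsilon\to D^\beta \phi$. Third, the denominator is uniformly bounded away from zero thanks to the strict positivity of the vertical radial derivative on $\mathcal A$ established in Lemma \ref{lema propriedade de F_epsilon}(2). Dividing a uniformly convergent numerator by a denominator with a uniform positive lower bound then yields the desired uniform convergence $D^\alpha \phi_\varepsilon \to D^\alpha \phi$ on $K_{U\times W_\theta}$.

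The main obstacle is the chain-rule bookkeeping needed to justify the quotient formula for $D^\alpha \phi_\varepsilon$: one must verify by induction that the recursive differentiation of the implicit equation introduces only derivatives of $(\zeta_\varepsilon \ast_v F)$ of order at most $|\alpha|$ at the moving point and only strictly lower-order derivatives of $\phi_\varepsilon$, so that both the inductive hypothesis and Lemma \ref{F epsilon diferenciavel}(3) apply to every factor in the resulting expression.
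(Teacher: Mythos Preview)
Your proposal is correct and follows essentially the same approach as the paper: Items (1) and (2) are obtained exactly as the paper does (the paper evaluates at $r=1/r_U$ rather than $r=1$, a trivial difference), and for Item (3) the paper likewise differentiates the defining implicit equation, invokes Lemma \ref{F epsilon diferenciavel}(3) together with Item (2), and appeals to the same inductive structure as in Proposition \ref{propriedades da aplicacao G epsilon}(3). Your write-up is in fact more explicit than the paper's terse pointer to ``calculations analogous to'' that proposition, and your careful handling of the moving evaluation point $(x,\phi_\varepsilon(x,\theta),\theta)$ and of the uniform lower bound on $\partial_r(\zeta_\varepsilon\ast_v F)$ via Lemma \ref{lema propriedade de F_epsilon}(2) fills in details the paper leaves implicit.
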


\begin{proof}

\

(1) From (\ref{G epsilon caso diferenciavel}) and (\ref{F caso diferenciavel}) observe that
\[
\frac{1}{\phi_\varepsilon (x, \theta)} = G_\varepsilon (x, 1/r_U, \theta), 
\]
which converges uniformly to 
\[
\frac{1}{\phi (x,\theta)} = F(x, 1/r_U, \theta)
\]
on $K_{U\times W_\theta}$ because $G_\varepsilon$ converges uniformly to $F$ on compact subsets of $TU$.

\

(2) As $\frac{1}{\phi_\varepsilon(x,\theta)}\in (r_u/(2r_U),2)$ for every $(x,\theta) \in U\times W_\theta$, it follows that $\phi_\varepsilon  \rightarrow \phi $ uniformly on $K_{U\times W_\theta}$ due to Item (1).

\

(3) Given $(x,\theta) \in U\times W_\theta,$ we have that
\[
\frac{\partial \phi_\varepsilon(x,\theta )}{\partial \theta^i} 
=- \frac{\frac{\partial (\zeta_\varepsilon \ast_v F)(x,\phi_\varepsilon(x,\theta),\theta) }{\partial \theta^i}}{\frac{\partial (\zeta_\varepsilon \ast_v F)(x,\phi_\varepsilon(x,\theta),\theta) }{\partial r}}
\]
and
\[
\frac{\partial \phi (x,\theta)}{\partial \theta^i} =- \frac{\frac{\partial F(x,\phi(x,\theta),\theta) }{\partial \theta^i}}{\frac{\partial F(x,\phi (x,\theta),\theta) }{\partial r}}.
\]
We also have analogous expressions for
\[
\frac{\partial \phi_\varepsilon(x,\theta )}{\partial x^i} \text{ and }\frac{\partial \phi (x,\theta)}{\partial x^i}.
\] 
The proof of this item follows from Item (2), Item (3) of Lemma \ref{F epsilon diferenciavel} and calculations analogous to the proof of Item (3) of Proposition \ref{propriedades da aplicacao G epsilon}.
\end{proof}

\begin{teo} 
\label{G epsilon convergencia theta i e r} 
Let $(M,F)$ be a Finsler manifold.
Let $U$ be an open subset of $M$ as defined in (\ref{definindo U}) and $(x^1, \ldots, x^n, y^1, \ldots, y^n)$ be the coordinate system on $TU$ as defined in (\ref{sistema de coordenadas x,y}). 
Let $K_{TU\setminus 0}$ be a compact subset of $TU\backslash 0$ and $\alpha=(\alpha_1,\ldots,\alpha_n,$ $\alpha_{n+1},\ldots,\alpha_{2n})$ be a multiindex.
Then 
\begin{equation}
\label{multiindice G varepsilon}
D^\alpha G_\varepsilon = \frac{\partial^{\vert \alpha\vert} G_\varepsilon}{\partial (x^1)^{\alpha_1} \ldots \partial (x^n)^{\alpha_n} \partial (y^1)^{\alpha_{n+1}} \ldots \partial (y^n)^{\alpha_{2n}}}
\end{equation}
converges uniformly to
\begin{equation}
\label{multiindice F}
D^\alpha F = \frac{\partial^{\vert \alpha\vert} F}{\partial (x^1)^{\alpha_1} \ldots \partial (x^n)^{\alpha_n} \partial (y^1)^{\alpha_{n+1}} \ldots \partial (y^n)^{\alpha_{2n}}}
\end{equation}
on $K_{TU\setminus 0}$. 
\end{teo}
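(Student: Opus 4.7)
The plan is to reduce everything to the spherical coordinates $((x^i), r, (\theta^j))$ of (\ref{sistema induzido}) and invoke Lemma \ref{convergencia phi epsilon}. First, since $K_{TU\setminus 0}$ is compact in $TU\setminus 0$, it can be covered by finitely many coordinate neighborhoods of the form $U\times C(S_\theta)$ where spherical coordinates $((x^i), r, (\theta^j))$ are defined. It therefore suffices to prove uniform convergence on an arbitrary compact subset $K'$ of one such neighborhood; in what follows $K' = K_U \times [r_-,r_+] \times K_\theta$ with $K_U \subset U$, $K_\theta \subset W_\theta$ compact and $0<r_-\leq r_+<\infty$.

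On $K'$, formulas (\ref{G epsilon caso diferenciavel}) and (\ref{F caso diferenciavel}) give
\[
G_\varepsilon(x,r,\theta) = r\cdot r_U \cdot \frac{1}{\phi_\varepsilon(x,\theta)},
\qquad
F(x,r,\theta) = r\cdot r_U \cdot \frac{1}{\phi(x,\theta)},
\]
so every partial derivative of $G_\varepsilon$ with respect to $(x^i, r, \theta^j)$ is a polynomial in $r$ (of degree at most $1$) with coefficients that are partial derivatives of $1/\phi_\varepsilon$ in $(x^i,\theta^j)$; the same holds for $F$ with $\phi$ in place of $\phi_\varepsilon$. Next, I would invoke the chain rule: the transition between the coordinates $(y^1,\ldots,y^n)$ and $(r,\theta^2,\ldots,\theta^n)$ is a smooth diffeomorphism on $\mathbb{R}^n\setminus\{0\}$, so there exist smooth functions $a^\beta_\alpha((y^i))$, independent of $\varepsilon$, such that
\[
D^\alpha G_\varepsilon(x,y) = \sum_{|\beta|\leq |\alpha|} a^\beta_\alpha(y)\, \widetilde{D}^\beta G_\varepsilon(x,r(y),\theta(y)),
\]
where $\widetilde{D}^\beta$ denotes a partial derivative with respect to $(x^i, r, \theta^j)$; the corresponding identity holds for $F$ with the same coefficients $a^\beta_\alpha$. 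On $K'$ the coefficients $a^\beta_\alpha$ are uniformly bounded, so the proof reduces to showing that each $\widetilde{D}^\beta G_\varepsilon$ converges uniformly to $\widetilde{D}^\beta F$ on $K'$.

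By the preceding paragraph, each $\widetilde{D}^\beta G_\varepsilon$ is a polynomial (in $r$ of degree at most $1$) whose coefficients are partial derivatives of $1/\phi_\varepsilon$ with respect to $(x^i,\theta^j)$; so it is enough to show uniform convergence on $K_U \times K_\theta$ of every partial derivative of $1/\phi_\varepsilon$ to the corresponding derivative of $1/\phi$. A straightforward induction (Faà di Bruno) expresses $D^\gamma(1/\phi_\varepsilon)$ as a sum of terms of the form $\phi_\varepsilon^{-k}\prod_j D^{\gamma_j}\phi_\varepsilon$ with $k$ bounded and $\gamma_j$ of order at most $|\gamma|$. Since $\phi_\varepsilon$ is bounded below by $1/2$ and above by $2r_U/r_u$ on $U\times W_\theta$ (see (\ref{aplicacao phi_epsilon})), and since Lemma \ref{convergencia phi epsilon} gives uniform convergence of $\phi_\varepsilon$ and all its partial derivatives to those of $\phi$ on $K_U\times K_\theta$, this algebraic combination converges uniformly to the analogous expression in terms of $\phi$.

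The main technical obstacle is the initial setup: verifying that the chain-rule coefficients $a^\beta_\alpha$ do not depend on $\varepsilon$ and are uniformly bounded on $K'$. Once this is done, combining the uniform convergence of all partial derivatives of $1/\phi_\varepsilon$ with the uniform bound $r\leq r_+$ on $K'$ yields uniform convergence of $\widetilde{D}^\beta G_\varepsilon$ to $\widetilde{D}^\beta F$, and then of $D^\alpha G_\varepsilon$ to $D^\alpha F$, on $K'$. A finite cover argument concludes the proof on all of $K_{TU\setminus 0}$.
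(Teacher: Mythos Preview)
Your proposal is correct and follows essentially the same route as the paper: reduce to spherical coordinates via a finite cover, use the explicit formulas $G_\varepsilon = r\,r_U/\phi_\varepsilon$ and $F = r\,r_U/\phi$, and then invoke Lemma~\ref{convergencia phi epsilon} together with the $\varepsilon$-independent chain-rule coefficients to pass back to the $(x^i,y^j)$ coordinates. Your write-up is more explicit about the chain rule and the Fa\`a di Bruno step than the paper's, which simply appeals to ``calculations analogous to the proof of Item~(3) of Proposition~\ref{propriedades da aplicacao G epsilon}.''
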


\begin{proof}
The uniform convergence of
\begin{equation}
\label{d alpha g epsilon}
D^\alpha G_\varepsilon 
= \frac{\partial^{\vert \alpha \vert} G_\varepsilon}{\partial (x^1)^{\alpha_1} \ldots \partial (x^n)^{\alpha_n} \partial r^{\alpha_{n+1}} \partial (\theta^2)^{\alpha_{n+2}} \ldots \partial (\theta^{n})^{\alpha_{2n}}}
\end{equation}
to
\begin{equation}
\label{d alpha f}
D^\alpha F
= \frac{\partial^{\vert \alpha \vert} F}{\partial (x^1)^{\alpha_1} \ldots \partial (x^n)^{\alpha_n} \partial r^{\alpha_{n+1}} \partial (\theta^2)^{\alpha_{n+2}} \ldots \partial (\theta^{n})^{\alpha_{2n}}}
\end{equation}
on compact subsets $K_{U\times C(S_\theta)}$ of $U \times C(S_\theta)$ is equivalent to the uniform convergence of (\ref{multiindice G varepsilon}) to (\ref{multiindice F}) on $K_{U\times C(S_\theta)}$ as a direct consequence of the chain rule.
This former convergence is enough to settle the theorem because a compact subset $K_{TU\setminus 0}$ of $TU\backslash 0$ can be covered by a finite number of compact subsets of type $K_{U\times C(S_\theta)}$.
Finally the uniform convergence of (\ref{d alpha g epsilon}) to (\ref{d alpha f}) follows from (\ref{G epsilon caso diferenciavel}), (\ref{F caso diferenciavel}), Proposition \ref{g epsilon converge uniforme}, Lemma \ref{F epsilon diferenciavel}, Lemma \ref{convergencia phi epsilon} and calculations analogous to the proof of Item (3) of Proposition \ref{propriedades da aplicacao G epsilon}.
\end{proof}

\begin{cor}
\label{derivada Gij} Let $(M,F)$ be a Finsler manifold.
Let $U$ be an open subset of $M$ as defined in (\ref{definindo U}) and $(x^1, \ldots, x^n, y^1, \ldots, y^n)$ be the coordinate system on $TU$ as defined in (\ref{sistema de coordenadas x,y}). 
Let $K_{TU\setminus 0}$ be a compact subset of $TU\backslash 0$ and $\alpha=(\alpha_1,\ldots,\alpha_n,$ $\alpha_{n+1},\ldots,\alpha_{2n})$ be a multiindex.
Then 
\begin{equation}
\label{multiindice G2 varepsilon}
D^\alpha G^2_\varepsilon = \frac{\partial^{\vert \alpha\vert} G^2_\varepsilon}{\partial (x^1)^{\alpha_1} \ldots \partial (x^n)^{\alpha_n} \partial (y^1)^{\alpha_{n+1}} \ldots \partial (y^n)^{\alpha_{2n}}}
\end{equation}
converges uniformly to
\begin{equation}
\label{multiindice F2}
D^\alpha F^2 = \frac{\partial^{\vert \alpha\vert} F^2}{\partial (x^1)^{\alpha_1} \ldots \partial (x^n)^{\alpha_n} \partial (y^1)^{\alpha_{n+1}} \ldots \partial (y^n)^{\alpha_{2n}}}
\end{equation}
on $K_{TU\setminus 0}$. 
In particular
\begin{equation}
\label{multiindice gijG varepsilon}
D^{\tilde \alpha} (g_{G_\varepsilon})_{ij} = \frac{\partial^{\vert \tilde\alpha\vert} (g_{G_\varepsilon})_{ij}}{\partial (x^1)^{\tilde \alpha_1} \ldots \partial (x^n)^{\tilde \alpha_n} \partial (y^1)^{\tilde \alpha_{n+1}} \ldots \partial (y^n)^{\tilde \alpha_{2n}}}
\end{equation}
converges uniformly to
\begin{equation}
\label{multiindice gij}
D^{\tilde \alpha} g_{ij} = \frac{\partial^{\vert \tilde \alpha\vert} g_{ij}}{\partial (x^1)^{\tilde \alpha_1} \ldots \partial (x^n)^{\tilde \alpha_n} \partial (y^1)^{\tilde \alpha_{n+1}} \ldots \partial (y^n)^{\tilde \alpha_{2n}}}
\end{equation}
on $K_{TU\setminus 0}$ for every multiindex $\tilde \alpha = (\tilde \alpha_1, \ldots, \tilde \alpha_{2n})$, where $(g_{G_\varepsilon})_{ij}$ are the coefficients of the fundamental tensor of $(M, G_\varepsilon)$ and $g_{ij}$ are the coefficients of the fundamental tensor of $(M,F)$. 
\end{cor}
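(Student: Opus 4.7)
The plan is to derive everything in this corollary from Theorem \ref{G epsilon convergencia theta i e r} by applying the Leibniz rule together with the elementary fact that uniform convergence is preserved by finite sums and products of uniformly bounded sequences.

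First I would fix a compact subset $K_{TU\setminus 0} \subset TU\setminus 0$ and a multiindex $\alpha = (\alpha_1,\ldots,\alpha_{2n})$. By the Leibniz rule,
\[
D^\alpha G^2_\varepsilon \;=\; D^\alpha(G_\varepsilon \cdot G_\varepsilon) \;=\; \sum_{\beta \leq \alpha} \binom{\alpha}{\beta}\, D^\beta G_\varepsilon \, D^{\alpha-\beta} G_\varepsilon,
\]
and the same identity holds with $G_\varepsilon$ replaced by $F$. By Theorem \ref{G epsilon convergencia theta i e r}, each factor $D^\beta G_\varepsilon$ converges uniformly to $D^\beta F$ on $K_{TU\setminus 0}$. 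Since $D^\beta F$ is continuous on $K_{TU\setminus 0}$ and therefore bounded, the sequence $D^\beta G_\varepsilon$ is uniformly bounded on $K_{TU\setminus 0}$ for all sufficiently small $\varepsilon$. Uniform convergence is preserved under products of uniformly bounded uniformly convergent sequences, so each summand $D^\beta G_\varepsilon \, D^{\alpha-\beta} G_\varepsilon$ converges uniformly to $D^\beta F \, D^{\alpha-\beta} F$ on $K_{TU\setminus 0}$. Since the sum is finite, $D^\alpha G^2_\varepsilon$ converges uniformly to $D^\alpha F^2$ on $K_{TU\setminus 0}$, which proves the first assertion.

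For the statement about the fundamental tensor, observe that by definition
\[
(g_{G_\varepsilon})_{ij} \;=\; \tfrac{1}{2}\,\frac{\partial^2 G^2_\varepsilon}{\partial y^i \partial y^j}, \qquad g_{ij} \;=\; \tfrac{1}{2}\,\frac{\partial^2 F^2}{\partial y^i \partial y^j}.
\]
Consequently, for any multiindex $\tilde\alpha = (\tilde\alpha_1,\ldots,\tilde\alpha_{2n})$,
\[
D^{\tilde\alpha}(g_{G_\varepsilon})_{ij} \;=\; \tfrac{1}{2}\, D^{\tilde\alpha + e_{n+i} + e_{n+j}} G^2_\varepsilon,
\]
where $e_k$ denotes the multiindex with $1$ in the $k$-th slot and $0$ elsewhere, and the analogous identity holds for $g_{ij}$. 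Applying the first assertion to the multiindex $\tilde\alpha + e_{n+i} + e_{n+j}$ yields the desired uniform convergence on $K_{TU\setminus 0}$.

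The only step that requires any care is the uniform boundedness of the factors $D^\beta G_\varepsilon$ for small $\varepsilon$; this is not really an obstacle, since it is immediate from the uniform convergence on $K_{TU\setminus 0}$ together with the continuity of the limit $D^\beta F$ on that compact set. Everything else is a routine application of the product rule and the stability of uniform convergence under finite sums and products.
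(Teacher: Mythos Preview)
Your proof is correct and follows essentially the same approach as the paper: the paper's argument is simply that $D^\alpha G_\varepsilon^2$ is a finite sum of products of terms $D^\beta G_\varepsilon$, each of which converges uniformly to $D^\beta F$ on $K_{TU\setminus 0}$ by Theorem \ref{G epsilon convergencia theta i e r}. You have written this out in more detail, making explicit the Leibniz formula, the uniform boundedness needed for products, and the reduction of the fundamental tensor statement to a special choice of multiindex, but the underlying idea is identical.
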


\begin{proof}
Observe that (\ref{multiindice G2 varepsilon})  can be written as sums of products of terms in (\ref{multiindice G varepsilon}).
Analogously (\ref{multiindice F2}) can be written as the correspondent sums of products of terms in (\ref{multiindice F}).
Therefore (\ref{multiindice G2 varepsilon}) converges uniformly to (\ref{multiindice F2}) on $K_{TU\backslash 0}$. 
\end{proof}

\begin{lem} 
\label{propriedades de derivada da F check} Let $(U_\lambda,(x^i)_\lambda)$, with $\lambda \in \Lambda$, be a coordinate open subset of a Finsler manifold $(M,F)$ as defined in (\ref{define U lambda}) and $(x^1, \ldots, x^n, y^1, \ldots, y^n)_\lambda$ be the natural coordinate system on $TU_\lambda$. 
Consider a multiindex $\alpha=(\alpha_1,\ldots,\alpha_{2n})$. 
Then
\begin{equation} 
\label{convergencia derivada F2 epsilon lambda}
D^\alpha F^2_{\varepsilon,\lambda} \rightarrow D^\alpha F^2
\end{equation} 
uniformly on compact subsets $K_{TV_\lambda \setminus 0}$ of $TV_\lambda \setminus 0$.
In particular, if $(g_{\varepsilon,\lambda})_{ij}$ are the coefficients of the fundamental tensor of $F_{\varepsilon, \lambda}$ and $\tilde \alpha$ is a multiindex, then
\begin{equation} 
\label{convergencia derivada}
D^{\tilde \alpha} (g_{\varepsilon,\lambda})_{ij} \rightarrow D^{\tilde \alpha} g_{ij} 
\end{equation} 
uniformly on compact subsets $K_{TV_\lambda \setminus 0}$ of $TV_\lambda \setminus 0$, where $g_{ij}$ are the coefficients of the fundamental tensor of $F$.
\end{lem}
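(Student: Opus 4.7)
The plan is to push the derivatives $D^\alpha$ inside the convolution defining $F^2_{\varepsilon,\lambda}$, write the error $D^\alpha F^2_{\varepsilon,\lambda}-D^\alpha F^2$ as a sum of two convolution-type terms, and control each term with a result already available in the paper.

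First, I would rewrite (\ref{F check lambda}) using the substitution $w=x-z$, obtaining
\[
F^2_{\varepsilon,\lambda}(x,y)=\int \eta_{\psi_\lambda(\varepsilon)}(w)\,G^2_{\psi_\lambda(\varepsilon),\lambda}(x-w,y)\,dw,
\]
so that every partial derivative in $x$ falls on $G^2_{\psi_\lambda(\varepsilon),\lambda}$ rather than on $\eta_{\psi_\lambda(\varepsilon)}$. The partial derivatives in $y$ also pass under the integral sign by standard differentiation-under-the-integral arguments, because $G^2_{\psi_\lambda(\varepsilon),\lambda}(\cdot,y)$ and all its $y$-derivatives are continuous on $TU_\lambda\setminus 0$ by Proposition \ref{propriedades da aplicacao G epsilon} (and Corollary \ref{derivada Gij} used for $G^2$). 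Hence for any multiindex $\alpha=(\alpha_1,\ldots,\alpha_{2n})$ one has
\[
D^\alpha F^2_{\varepsilon,\lambda}(x,y)=\int \eta_{\psi_\lambda(\varepsilon)}(w)\,D^\alpha G^2_{\psi_\lambda(\varepsilon),\lambda}(x-w,y)\,dw.
\]

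Now fix a compact $K_{TV_\lambda\setminus 0}\subset TV_\lambda\setminus 0$. Shrinking $\varepsilon$ so that $\psi_\lambda(\varepsilon)<\nu_\lambda$ and so that the $\psi_\lambda(\varepsilon)$-neighborhood of the projection of $K_{TV_\lambda\setminus 0}$ onto $M$ stays in $U_\lambda$ and away from the zero section in the $y$-fiber, I would split
\begin{align*}
\bigl|D^\alpha F^2_{\varepsilon,\lambda}(x,y)-D^\alpha F^2(x,y)\bigr|
&\le \Bigl|\int \eta_{\psi_\lambda(\varepsilon)}(w)\bigl[D^\alpha G^2_{\psi_\lambda(\varepsilon),\lambda}(x-w,y)-D^\alpha F^2(x-w,y)\bigr]dw\Bigr| \\
&\quad +\bigl|(\eta_{\psi_\lambda(\varepsilon)}\ast_1 D^\alpha F^2)(x,y)-D^\alpha F^2(x,y)\bigr|.
\end{align*}
The first summand is bounded by the supremum of $|D^\alpha G^2_{\psi_\lambda(\varepsilon),\lambda}-D^\alpha F^2|$ over a fixed compact set of $TU_\lambda\setminus 0$, which tends to $0$ by Corollary \ref{derivada Gij}. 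The second summand tends to $0$ uniformly on $K_{TV_\lambda\setminus 0}$ by Proposition \ref{funcao convergencia uniforme} applied to the continuous function $D^\alpha F^2$. This establishes (\ref{convergencia derivada F2 epsilon lambda}).

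Finally, the assertion (\ref{convergencia derivada}) about the fundamental tensor is an immediate specialization: since $(g_{\varepsilon,\lambda})_{ij}=\tfrac12 \partial^2 F^2_{\varepsilon,\lambda}/\partial y^i\partial y^j$ and $g_{ij}=\tfrac12 \partial^2 F^2/\partial y^i\partial y^j$, the derivative $D^{\tilde\alpha}(g_{\varepsilon,\lambda})_{ij}$ is $\tfrac12 D^{\alpha}F^2_{\varepsilon,\lambda}$ for the multiindex $\alpha$ obtained from $\tilde\alpha$ by adding $1$ to the $y^i$- and $y^j$-entries, and similarly for $F^2$, so (\ref{convergencia derivada}) follows from (\ref{convergencia derivada F2 epsilon lambda}). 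The only delicate point in the whole argument is the interchange of $D^\alpha$ with the integral for higher order, which is handled cleanly by the change of variables described above together with the smoothness of $G^2_{\psi_\lambda(\varepsilon),\lambda}$ on $TU_\lambda\setminus 0$.
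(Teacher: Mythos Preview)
Your proposal is correct and follows essentially the same route as the paper's own proof: both pass $D^\alpha$ under the convolution to obtain $D^\alpha F^2_{\varepsilon,\lambda}=\eta_{\psi_\lambda(\varepsilon)}\ast_1(D^\alpha G^2_{\psi_\lambda(\varepsilon),\lambda})$, then split the error via the intermediate term $\eta_{\psi_\lambda(\varepsilon)}\ast_1(D^\alpha F^2)$ and control the two pieces by Corollary~\ref{derivada Gij} and Proposition~\ref{funcao convergencia uniforme} respectively (the paper phrases this as ``proceeding as in the proof of (\ref{menor que delta})''). One minor point: to put the $x$-derivatives on $G^2_{\psi_\lambda(\varepsilon),\lambda}$ you need its smoothness in $x$, which does not come from Proposition~\ref{propriedades da aplicacao G epsilon} (that only gives continuity of the $y$-derivatives) but from the fact, established just after (\ref{G epsilon caso diferenciavel}), that $G_\varepsilon$ is a genuine Finsler structure when $F$ is.
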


\begin{proof}
Consider $(x,y) \in TV_\lambda\setminus 0.$ As
\[
F^2_{\varepsilon,\lambda}(x,y)=\int \eta_{\psi_\lambda(\varepsilon)}(z)G^2_{\psi_\lambda(\varepsilon),\lambda}(x-z,y)dz,
\]
then
\[
D^\alpha (F^2_{\varepsilon,\lambda}) (x,y)
= \int \eta_{\psi_\lambda(\varepsilon)}(z) \left( D^\alpha G^2_{\psi_\lambda(\varepsilon),\lambda} \right) (x-z,y) dz.
\]
Define 
\[
(\eta_{\psi_\lambda(\varepsilon)} \ast_1 (D^\alpha F^2)) (x,y)= \int \eta_{\psi_\lambda(\varepsilon)}(z) (D^\alpha F^2)(x-z,y)dz.
\]
Now we proceed as in the proof of (\ref{menor que delta}) with $F^2_{\varepsilon, \lambda}$ and $F^2$ replaced by $D^\alpha F^2_{\varepsilon, \lambda}$ and $D^\alpha F^2$ respectively.
When we do the estimate $\vert D^\alpha F^2_{\varepsilon,\lambda}(x,y) - D^\alpha F^2(x,y)\vert < \delta$ as in (\ref{demonstracao F2 converge uniforme}), we need the uniform convergence 
\[
D^\alpha G^2_{\psi_\lambda (\varepsilon),\lambda} \rightarrow  D^\alpha F^2
\] 
on $K_{TV_\lambda \setminus 0}$, which is assured by Corollary \ref{derivada Gij}.
\end{proof}

\begin{obs}
\label{convergencia uniforme nao depende sistema de coordenadas}
Suppose that we are in the conditions of Lemma \ref{propriedades de derivada da F check}. 
Consider the Finsler structure $F_{\varepsilon,\lambda}:TV_\lambda \rightarrow [0,\infty)$ defined in terms of the coordinate system $(x^1, \ldots, x^n, y^1, \ldots,y^n)_\lambda$.
Let $(\tilde x^1, \ldots, \tilde x^n, \tilde y^1, \ldots, \tilde y^n)$ be another natural coordinate system on a neighborhood of $K_{TV_\lambda\setminus 0}$. 
Then the uniform convergence
\[
\frac{\partial^{\vert \tilde \alpha \vert}F^2_{\varepsilon, \lambda}}{\partial (\tilde x^1)^{\tilde \alpha_1} \ldots \partial (\tilde y^n)^{\tilde \alpha_{2n}}}
\rightarrow 
\frac{\partial^{\vert \tilde \alpha \vert}F^2}{\partial (\tilde x^1)^{\tilde \alpha_1} \ldots \partial (\tilde y^n)^{\tilde \alpha_{2n}}} 
\] 
on $K_{TV_\lambda \backslash 0}$ holds for every multiindex $\tilde \alpha$ because the uniform convergence
\[
\frac{\partial^{\vert \alpha \vert}F^2_{\varepsilon, \lambda}}{\partial (x^1_\lambda)^{\alpha_1} \ldots \partial (y^n_\lambda)^{\alpha_{2n}}}
\rightarrow 
\frac{\partial^{\vert \alpha \vert}F^2}{\partial (x^1_\lambda)^{\alpha_1} \ldots \partial (y^n_\lambda)^{\alpha_{2n}}} 
\]
on $K_{TV_\lambda \backslash 0}$ holds for every multiindex $\alpha$ due to the chain rule and the compactness of $K_{TV_\lambda \backslash 0}$.
In particular, the correspondent convergences of the coefficients of the fundamental tensor and their derivatives hold. 
\end{obs}


\begin{teo} 
\label{propriedades de derivada da F epsilon} 
Let $(M,F)$ be a Finsler manifold and $F_\varepsilon:TM \rightarrow [0,\infty)$ be a mollifier smoothing of $F$. 
Let $U$ be an open subset of $M$ with coordinate system $(x^1,\ldots, x^n)$ and  $(x^1, \ldots, x^n, y^1, \ldots, y^n)$ be the correspondent natural coordinate system of $TU$. 
Let $g_{ij}$ and $(g_\varepsilon)_{ij}$ be the coefficients of the fundamental tensor of $F$ and $F_\varepsilon$ respectively with respect to $(x^1, \ldots, x^n, y^1, \ldots, y^n)$.
Then 
\begin{equation}
D^\alpha (g_\varepsilon)_{ij} \rightarrow D^\alpha g_{ij} \label{convergencia uniforme tensor fundamental}
\end{equation}
and
\begin{equation}
D^\alpha (g_\varepsilon)^{ij} \rightarrow D^\alpha g^{ij} \label{convergencia uniforme tensor inverso}
\end{equation}
uniformly on compact subsets of $TU\setminus 0$, where $\alpha=(\alpha_1,\ldots,\alpha_{2n}) $ is a multiindex and the partial derivatives are taken with respect to $(x^1, \ldots, x^n, y^1, \ldots,$ $y^n)$.
\end{teo}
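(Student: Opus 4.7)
The plan is to reduce the statement to the already-established convergences for the local mollifier smoothings $F_{\varepsilon,\lambda}$. Since $\varphi_\lambda$ depends only on $x$, squaring (\ref{F epsilon definida em TM}) and differentiating twice in $y$ yields
\[
(g_\varepsilon)_{ij}(x,y) = \sum_{\lambda\in\Lambda} \varphi_\lambda(x)\,(g_{\varepsilon,\lambda})_{ij}(x,y),
\]
where $(g_{\varepsilon,\lambda})_{ij}$ denotes the Hessian entries of $\tfrac{1}{2}F^2_{\varepsilon,\lambda}$ \emph{expressed in the natural coordinates $(x^i,y^i)$ of $U$}. Trivially $g_{ij} = \sum_\lambda \varphi_\lambda g_{ij}$ as well. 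Fix a compact $K\subset TU\setminus 0$. By the local finiteness of the partition of unity, only finitely many $\lambda_1,\dots,\lambda_N$ have $\mathrm{supp}(\varphi_\lambda)\cap \pi(K)\neq \emptyset$, and for each such $\lambda$ the set $K_\lambda := K\cap \overline{TV_\lambda}$ is a compact subset of $TV_\lambda\setminus 0$.

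For each $\lambda\in\{\lambda_1,\dots,\lambda_N\}$, Lemma \ref{propriedades de derivada da F check} gives the uniform convergence $D^{\tilde\alpha}(g_{\varepsilon,\lambda})_{ij}\to D^{\tilde\alpha} g_{ij}$ on $K_\lambda$ in the natural coordinates of $U_\lambda$. Remark \ref{convergencia uniforme nao depende sistema de coordenadas} transfers this convergence to the natural coordinates $(x^i,y^i)$ of $U$ (this is the main subtlety, since $U$ and $U_\lambda$ need not share coordinates). Applying Leibniz to each summand,
\[
D^\alpha\!\bigl(\varphi_\lambda (g_{\varepsilon,\lambda})_{ij}\bigr) = \sum_{\beta\leq\alpha}\binom{\alpha}{\beta} D^\beta\varphi_\lambda \cdot D^{\alpha-\beta}(g_{\varepsilon,\lambda})_{ij},
\]
each factor $D^\beta\varphi_\lambda$ is bounded on the compact set $\pi(K)$, while $D^{\alpha-\beta}(g_{\varepsilon,\lambda})_{ij}\to D^{\alpha-\beta}g_{ij}$ uniformly on $K_\lambda$. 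Summing the finitely many $\lambda\in\{\lambda_1,\dots,\lambda_N\}$ and using $\sum_\lambda\varphi_\lambda\equiv 1$ then establishes (\ref{convergencia uniforme tensor fundamental}).

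For (\ref{convergencia uniforme tensor inverso}), I would use Cramer's rule: $(g_\varepsilon)^{ij}$ is a polynomial in $\{(g_\varepsilon)_{kl}\}$ divided by $\det((g_\varepsilon)_{kl})$. Since $(g_{ij})$ is positive definite and $K$ is compact, $\det(g_{ij})$ admits a uniform positive lower bound $c>0$ on $K$; by (\ref{convergencia uniforme tensor fundamental}) with $\alpha = 0$, the determinants $\det((g_\varepsilon)_{kl})$ are bounded below by $c/2$ on $K$ for all sufficiently small $\varepsilon$. Uniform convergence of $(g_\varepsilon)^{ij}\to g^{ij}$ then follows from the uniform convergence of the entries. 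For higher-order derivatives, I would proceed by induction on $|\alpha|$: differentiating the identity $(g_\varepsilon)^{ir}(g_\varepsilon)_{rj}=\delta^i{}_j$ and applying the Leibniz rule yields
\[
(g_\varepsilon)^{ir}\,D^\alpha(g_\varepsilon)_{rj} + \sum_{0\leq\beta<\alpha}\binom{\alpha}{\beta} D^{\alpha-\beta}(g_\varepsilon)^{ir}\, D^\beta(g_\varepsilon)_{rj} = 0,
\]
which after contraction with $(g_\varepsilon)^{js}$ expresses $D^\alpha(g_\varepsilon)^{is}$ in terms of (a) derivatives of $(g_\varepsilon)_{rj}$ up to order $|\alpha|$ and (b) derivatives of $(g_\varepsilon)^{ir}$ of strictly lower order. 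Both factor types converge uniformly on $K$ by (\ref{convergencia uniforme tensor fundamental}) and the inductive hypothesis (together with the uniform positive lower bound on $\det$), completing the induction.
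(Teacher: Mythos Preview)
Your argument is essentially identical to the paper's: the same decomposition $(g_\varepsilon)_{ij}=\sum_\lambda \varphi_\lambda (g_{\varepsilon,\lambda})_{ij}$, the same reduction to finitely many $\lambda$, the same appeal to Lemma \ref{propriedades de derivada da F check} and Remark \ref{convergencia uniforme nao depende sistema de coordenadas}, and the same Leibniz-plus-$\sum_\lambda\varphi_\lambda\equiv 1$ computation. For (\ref{convergencia uniforme tensor inverso}) the paper simply invokes the adjugate/determinant formula in one line, whereas you spell out the determinant lower bound and give an inductive argument via $D^\alpha\bigl((g_\varepsilon)^{ir}(g_\varepsilon)_{rj}\bigr)=0$; both are fine and amount to the same thing. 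One small inaccuracy: $K_\lambda=K\cap\overline{TV_\lambda}$ is a compact subset of $\overline{TV_\lambda}\setminus 0$, not of $TV_\lambda\setminus 0$; this is harmless since $F_{\varepsilon,\lambda}$ is defined on $TO_{\varepsilon,\lambda}\supset \overline{TV_\lambda}$ and the convergence of Lemma \ref{propriedades de derivada da F check} extends there, but you should state it correctly (the paper's proof glosses over the same point).
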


\begin{proof}
Convergence (\ref{convergencia uniforme tensor inverso}) follows from (\ref{convergencia uniforme tensor fundamental}) and the formula of the inverse of a matrix in terms of its adjoint and its determinant.

Let us prove (\ref{convergencia uniforme tensor fundamental}).
Let $K_{TU\setminus 0}$ be a compact subset of $TU\backslash 0$. 
If $(x,y) \in K_{TU\setminus 0}$, then
\[
D^\alpha (g_\varepsilon)_{ij}(x,y) = D^\alpha\left(\sum_{\lambda \in \Theta}\varphi_\lambda(x) (g_{\varepsilon,\lambda})_{ij}(x,y) \right)
\]
as a consequence of (\ref{F epsilon definida em TM}).
It follows that
\begin{eqnarray}
D^\alpha (g_\varepsilon)_{ij}(x,y) & = & \sum_{\lambda \in \Theta} D^\alpha\left(\varphi_\lambda(x)  (g_{\varepsilon,\lambda})_{ij}(x,y) \right) \label{decompoe d alpha f epsilon} \\
& = &  \sum_{\lambda \in \Theta} \sum_{\upsilon \leq \alpha} D^\upsilon \varphi_\lambda(x)D^{\alpha-\upsilon}(g_{\varepsilon,\lambda})_{ij}(x,y), \nonumber
\end{eqnarray}
where $\lambda \in \Theta$ are the indexes such that $K_{TU \setminus 0} \cap \overline{TV_\lambda} \neq \emptyset$. 
Then $\Theta$ is a finite set and it follows that  
\[
\sum_{\lambda \in \Theta} \sum_{\upsilon\leq \alpha} D^\upsilon \varphi_\lambda(x)D^{\alpha-\upsilon} (g_{\varepsilon,\lambda})_{ij}(x,y)
\] 
converges uniformly to 
\[
\sum_{\lambda \in \Theta} \sum_{\upsilon\leq \alpha} D^\upsilon \varphi_\lambda(x)D^{\alpha-\upsilon}g_{ij}(x,y) \label{derivada F decomposto}
\]
on $K_{TU \setminus 0}$ due to Lemma \ref{propriedades de derivada da F check} and Remark \ref{convergencia uniforme nao depende sistema de coordenadas}. 
But (\ref{derivada F decomposto}) is equal to
\[
\sum_{\upsilon\leq \alpha} D^{\alpha-\upsilon}g_{ij}(x,y) D^\upsilon \left( \sum_{\lambda \in \Theta} \varphi_\lambda(x)\right)
= \sum_{\upsilon\leq \alpha} D^{\alpha-\upsilon}g_{ij}(x,y) D^\upsilon \left( 1 \right) = D^\alpha g_{ij}(x,y),
\]
what settles the theorem.
\end{proof}

\section{Convergence of connections and curvatures}
\label{secao convergencia conexoes curvatura}

Let $(M,F)$ be a Finsler manifold. 
In this section we study the convergence of the Chern connection, Cartan connection, Hashiguchi connection, Berwald connection and the flag curvature of the mollifier smoothing $F_\varepsilon$ to the corresponding objects of $F$. 
The precise nature of these convergences will be explained afterwards.
We represent the geometrical objects with respect to $F_\varepsilon$ with a subscript $\varepsilon$, as we did in Theorem \ref{propriedades de derivada da F epsilon} for the fundamental tensor $g_\varepsilon$. 

Let $(x^1, \ldots,$ $x^n)$ be a coordinate system on an open subset $U$ of $M$ and $(x^1, \ldots, x^n, y^1, \ldots,$ $y^n)$ be the corresponding natural coordinate system on $TU$. 
In this section, {\em whenever we are studying components of a geometrical object, we suppose that these coordinate open subsets are in place}.

\begin{lem}
\label{several uniform convergences}
The following geometrical objects of $(M,F_\varepsilon)$ converges uniformly to the respective objects of $(M,F)$ on compact subsets of $TU\backslash 0$:
\begin{eqnarray}
D^\alpha (\ell_\varepsilon)^i & \rightarrow & D^\alpha \ell^i; \nonumber \\
D^\alpha (A_\varepsilon)_{ijk} & \rightarrow & D^\alpha A_{ijk}; \nonumber \\
D^\alpha (C_\varepsilon)_{ijk} & \rightarrow & D^\alpha C_{ijk}; \nonumber \\
D^\alpha (\gamma_\varepsilon)^i{}_{jk} & \rightarrow & D^\alpha \gamma^i{}_{jk}; \nonumber \\
D^\alpha (N_\varepsilon)^i{}_j & \rightarrow & D^\alpha N^i{}_j; \nonumber \\
D^\alpha (\Gamma_\varepsilon)^i{}_{jk} & \rightarrow & D^\alpha \Gamma^i{}_{jk}; \nonumber \\
D^\alpha (\sigma_\varepsilon)^i & \rightarrow & D^\alpha \sigma^i; \nonumber \\
D^\alpha (\dot A_\varepsilon)^i{}_{jk} & \rightarrow & D^\alpha \dot A^i{}_{jk}; \nonumber \\
D^\alpha (R_\varepsilon)_{jikl} & \rightarrow & D^\alpha R_{jikl}, \nonumber
\end{eqnarray}
for every multiindex $\alpha$.
\end{lem}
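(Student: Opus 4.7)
The plan is to reduce every convergence in the list to the convergence of the fundamental tensor and its partial derivatives supplied by Theorem \ref{propriedades de derivada da F epsilon}. Each object on the list is a rational combination of $F_\varepsilon$, the components $(g_\varepsilon)_{ij}$ and $(g_\varepsilon)^{ij}$, the coordinate functions $y^i$, and a finite number of their partial derivatives, with denominators consisting of positive powers of $F_\varepsilon$ and $\det((g_\varepsilon)_{ij})$. On a compact subset $K$ of $TU\setminus 0$, both $F$ and $\det(g_{ij})$ are bounded uniformly away from zero, and Theorem \ref{propriedades da aplicacao F epsilon definida em TM} together with Theorem \ref{propriedades de derivada da F epsilon} forces $F_\varepsilon$ and $\det((g_\varepsilon)_{ij})$ to share this property for all sufficiently small $\varepsilon$. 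Since uniform convergence on $K$ is preserved under finite sums, products, and quotients by families uniformly bounded away from zero, the uniform convergence of each compound object follows by assembling its defining formula.

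I would carry out the assembling in the order the objects are listed. Taking the square root of $F_\varepsilon^2$ and using the positive lower bound on $F$ gives $D^\alpha F_\varepsilon \to D^\alpha F$ uniformly on $K$ for every multiindex $\alpha$, and hence $D^\alpha \ell_\varepsilon^i \to D^\alpha \ell^i$ via $\ell^i=y^i/F$. The Cartan tensor components $(A_\varepsilon)_{ijk}$ and $(C_\varepsilon)_{ijk}$ in (\ref{define componente tensor de cartan}) and (\ref{unormalized Cartan tensor}) are one-half of a $y$-derivative of $(g_\varepsilon)_{ij}$, multiplied in the first case by $F_\varepsilon$, so their convergence is immediate from Theorem \ref{propriedades de derivada da F epsilon}. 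The formal Christoffel symbols $(\gamma_\varepsilon)^i{}_{jk}$ in (\ref{definicao formal Christoffel simbols}) are sums of products of $(g_\varepsilon)^{is}$ and $x$-derivatives of $(g_\varepsilon)_{jk}$, and hence converge. Substituting into (\ref{definicao non linear connection}), (\ref{Gamma almost metric compatibility}), (\ref{definicao Gi}) and (\ref{define a dot}) expresses $(N_\varepsilon)^i{}_j$, $(\Gamma_\varepsilon)^i{}_{jk}$, $(\sigma_\varepsilon)^i$ and $(\dot A_\varepsilon)^i{}_{jk}$ as polynomial combinations of quantities whose $D^\alpha$ derivatives are already known to converge uniformly on $K$. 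Finally, combining (\ref{define delta delta}), (\ref{Rjikl hh}) and (\ref{Rjikl abaixa}) writes $(R_\varepsilon)_{jikl}$ as a polynomial in $(\Gamma_\varepsilon)^i{}_{jk}$, its first $x$- and $y$-derivatives, $(N_\varepsilon)^i{}_j$ and $(g_\varepsilon)_{im}$, and the same mechanism supplies the last convergence.

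The only point that requires a little care is the handling of an arbitrary multiindex $D^\alpha$: one must observe that $D^\alpha$ applied to a rational combination of uniformly convergent families is again a rational combination of the base families and their derivatives of order at most $|\alpha|$, divided by a suitable power of the denominator. This is a direct consequence of the Leibniz rule and the quotient rule. All such base derivatives converge uniformly on $K$ by Theorem \ref{propriedades de derivada da F epsilon}, and the denominators remain uniformly bounded away from zero on $K$ for small $\varepsilon$, so the asserted uniform convergences follow. No analytic input beyond Theorem \ref{propriedades de derivada da F epsilon} is required, and the whole proof is a bookkeeping exercise built on top of that theorem.
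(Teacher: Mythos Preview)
Your proposal is correct and follows essentially the same approach as the paper: the paper's proof is a one-line appeal to the defining formulas (\ref{define distinguished}), (\ref{define componente tensor de cartan}), (\ref{unormalized Cartan tensor}), (\ref{definicao formal Christoffel simbols}), (\ref{definicao non linear connection}), (\ref{define delta delta}), (\ref{Gamma almost metric compatibility}), (\ref{define a dot}), (\ref{definicao Gi}), (\ref{Rjikl hh}), (\ref{Rjikl abaixa}) together with Theorems \ref{propriedades da aplicacao F epsilon definida em TM} and \ref{propriedades de derivada da F epsilon}, and you have simply made explicit the bookkeeping (rational combinations, denominators bounded away from zero, Leibniz and quotient rules) that the paper leaves implicit.
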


\begin{proof}
The proof is a direct consequence of (\ref{define distinguished}), (\ref{define componente tensor de cartan}), (\ref{unormalized Cartan tensor}), (\ref{definicao formal Christoffel simbols}), (\ref{definicao non linear connection}), (\ref{define delta delta}), (\ref{Gamma almost metric compatibility}), (\ref{define a dot}), (\ref{definicao Gi}),  (\ref{Rjikl hh}), (\ref{Rjikl abaixa}), Theorem \ref{propriedades da aplicacao F epsilon definida em TM}  and Theorem \ref{propriedades de derivada da F epsilon}.
\end{proof}

Now we present the concept of uniform convergence on $\pi^\ast TM$.

\begin{defi}
\label{define estrutura de Finsler C0 piTM}
A $C^0$-Finsler structure on $\pi^\ast TM$ is a continuous function $F_\pi : \pi^\ast TM \rightarrow \mathbb R$ such that $F_\pi(x,y,\cdot):(\pi^\ast TM)_{(x,y)} \rightarrow \mathbb R$ is an asymmetric norm on $(\pi^\ast TM)_{(x,y)} \cong T_xM$.
\end{defi}

\begin{defi} 
\label{define convergencia uniforme piTM}
Let $M$ be a differentiable manifold and $K_{TM \setminus 0}$ be a compact subset of $TM \setminus 0$.
We say that a one parameter family of continuous sections $E_\varepsilon$ converges uniformly to a continuous section $E$ on $K_{TM \setminus 0}$ if there exist a $C^0$-Finsler structure $F_\pi$ on $\pi^\ast TM$ such that for every $\delta >0$, there exist a $\beta >0$ such that 
\[
F_\pi (x, y, E_\varepsilon(x,y) - E(x,y))< \delta
\]
for every $(x,y) \in K_{TM \setminus 0}$ and every $\varepsilon \in (0, \beta)$.
\end{defi}

\begin{obs}
\label{observacao nao depende estrutura finsler}
The definition of uniform convergence stated in Definition \ref{define convergencia uniforme piTM} does not depend on the choice of $F_\pi$.
In fact, if $\tilde F_\pi$ is another $C^0$-Finsler structure on $\pi^\ast TM$ and $K_{TM \setminus 0}$ is a compact subset in $TM\setminus 0$, then
there exist $c, C >0$ such that
\[
c F_\pi (x,y, E(x,y)) \leq \tilde F_\pi (x,y, E(x,y)) \leq C F_\pi (x,y, E(x,y))
\]
for every $(x,y) \in K_{TM \setminus 0}$ and every continuous section $E$ on $\pi^\ast TM$.
\end{obs}

In what follows, we denote by $\nabla_\varepsilon$ the Chern connection on $\pi^\ast TM$ with respect to $F_\varepsilon$.

\begin{teo} 
\label{convergencia uniforme Chern}
Let $(M,F)$ be a Finsler manifold and let $F_\varepsilon$ be a mollifier smoothing of $F$. 
Let $X$ be an arbitrary section on $TM\backslash 0$ and $E$ be an arbitrary section on $\pi^*TM$.
Then $\left( \nabla_\varepsilon \right)_X E$
converges uniformly to $\nabla_X E$ on compact subsets $K_{TM \setminus 0}$ of $TM\setminus 0$.
The statement also holds for the Cartan connection, Hashiguchi connection and Berwald connection.
\end{teo}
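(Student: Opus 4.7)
The plan is to reduce the statement to the coordinate-wise uniform convergence of the connection coefficients, which is already provided by Lemma \ref{several uniform convergences}, together with the coordinate-independence of the notion of uniform convergence on $\pi^\ast TM$ established in Remark \ref{observacao nao depende estrutura finsler}.

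First, I would cover the compact set $K_{TM\setminus 0}$ by finitely many natural coordinate neighborhoods $TU_{\lambda_1}\setminus 0,\ldots,TU_{\lambda_N}\setminus 0$, and work on each piece separately. On a fixed $TU_{\lambda_\ell}\setminus 0$, write $E = E^i\frac{\partial}{\partial x^i}$ as a section of $\pi^\ast TU_{\lambda_\ell}$ and $X = X^j\frac{\partial}{\partial x^j}+Y^j\frac{\partial}{\partial y^j}$ as a vector field on $TU_{\lambda_\ell}\setminus 0$. By Theorem \ref{teorema conexao de Chern} and the discussion following it, the Chern connection of $F$ and of $F_\varepsilon$ satisfy
\[
\bigl(\nabla_X E - (\nabla_\varepsilon)_X E\bigr)^i = \bigl(\Gamma^i{}_{jk} - (\Gamma_\varepsilon)^i{}_{jk}\bigr)\,E^j X^k,
\]
since the derivative term $X(E^i)$ cancels in the difference. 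On the compact set $K_{TM\setminus 0}\cap \overline{TU_{\lambda_\ell}\setminus 0}$, the continuous components $E^i$ and $X^k$ are uniformly bounded, and Lemma \ref{several uniform convergences} provides $(\Gamma_\varepsilon)^i{}_{jk}\to \Gamma^i{}_{jk}$ uniformly on that set. Hence each component of the difference tends to zero uniformly.

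To convert component-wise uniform convergence into uniform convergence in the sense of Definition \ref{define convergencia uniforme piTM}, I would fix any convenient $C^0$-Finsler structure $F_\pi$ on $\pi^\ast TM$ (for example the one coming from an auxiliary Riemannian metric on $M$); by Remark \ref{observacao nao depende estrutura finsler} the choice is immaterial. On each $TU_{\lambda_\ell}$, $F_\pi$ is comparable to the Euclidean norm on the fibers of $\pi^\ast TU_{\lambda_\ell}$ with uniform constants over the compact piece, so component-wise convergence implies convergence in $F_\pi$. Taking the maximum of the finitely many $\varepsilon$-thresholds obtained on each piece of the cover yields the required uniform convergence on all of $K_{TM\setminus 0}$.

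For the Cartan, Hashiguchi, and Berwald connections, the connection forms differ from $\omega_j{}^i$ by additive terms built from $C^i{}_{jk}\,\delta y^k$ and $\dot A^i{}_{jk}\,dx^k$ (Definition \ref{Cartan Hasiguchi Berwarld}); the analogous difference $(\nabla_X E-(\nabla_\varepsilon)_X E)^i$ is then a linear combination of
$\Gamma^i{}_{jk}-(\Gamma_\varepsilon)^i{}_{jk}$,
$C^i{}_{jk}-(C_\varepsilon)^i{}_{jk}$,
$\dot A^i{}_{jk}-(\dot A_\varepsilon)^i{}_{jk}$ and
$N^k{}_j-(N_\varepsilon)^k{}_j$,
multiplied by components of $E$, $X$ and products thereof. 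All of these factors tend to zero uniformly on compact subsets of $TU\setminus 0$ by Lemma \ref{several uniform convergences}, and the same covering argument finishes the proof. The main obstacle I anticipate is purely bookkeeping: making sure that the cover of $K_{TM\setminus 0}$ respects the partition of unity used in defining $F_\varepsilon$ so that each local expression of $(\Gamma_\varepsilon)^i{}_{jk}$ is indeed the one controlled by Lemma \ref{several uniform convergences}, but this is handled by shrinking the cover to lie inside the neighborhoods where the convergence of Theorem \ref{propriedades de derivada da F epsilon} applies.
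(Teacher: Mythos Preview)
Your proposal is correct and follows essentially the same route as the paper: reduce to a finite cover by coordinate charts, write the difference of the connections in local coordinates so that only $(\Gamma^i{}_{jk}-(\Gamma_\varepsilon)^i{}_{jk})E^jX^k$ (and, for the other connections, the analogous terms in $C^i{}_{jk}$, $\dot A^i{}_{jk}$, $N^i{}_j$) survives, and invoke Lemma \ref{several uniform convergences} together with Remark \ref{observacao nao depende estrutura finsler}. Your anticipated bookkeeping obstacle is not a real issue, since Theorem \ref{propriedades de derivada da F epsilon} and Lemma \ref{several uniform convergences} are stated for an \emph{arbitrary} coordinate system $(x^i)$ on $M$, not only for the charts $(U_\lambda,(x^i)_\lambda)$ used to build $F_\varepsilon$.
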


\begin{proof}
Notice that it is enough to prove the uniform convergence for compact subsets $K_{TU \setminus 0}$ of any coordinate open subset $TU\setminus 0$ because a finite number of them cover $K_{TM \setminus 0}$.
Moreover it is enough to prove the uniform convergence with respect to the fundamental tensor $g$ on $\pi^\ast TM$ due to Remark \ref{observacao nao depende estrutura finsler}.

Consider a natural coordinate system $(x^1, \ldots, x^n, y^1, \ldots, y^n)$ on $TU$. 
Then
\begin{eqnarray*}
\nabla_X E & = & \nabla_{\left( X^i \frac{\partial}{\partial x^i} + Y^j \frac{\partial}{\partial y^j}\right)}\left(E^k\frac{\partial}{\partial x^k}\right) \\
&= & X^i \frac{\partial E^k}{\partial x^i} \frac{\partial }{\partial x^k} + Y^j \frac{\partial E^k}{\partial y^j} \frac{\partial}{\partial x^k} + E^k X^i \omega_k{}^j \left( \frac{\partial}{\partial x^i}\right) \frac{\partial}{\partial x^j} \\
& = & X^i \frac{\partial E^k}{\partial x^i} \frac{\partial }{\partial x^k} + Y^j \frac{\partial E^k}{\partial y^j} \frac{\partial}{\partial x^k} + E^j X^i \Gamma^k{}_{ji} \frac{\partial}{\partial x^k},
\end{eqnarray*}
and
\begin{eqnarray*}
& \sqrt{ g_{(x,y)}\left(\nabla_X E - (\nabla_\varepsilon)_X E, \nabla_X E - (\nabla_\varepsilon)_X E\right) } &  \\
& = \sqrt{ g_{(x,y)}\left( E^j X^i (\Gamma^k{}_{ji} - (\Gamma_\varepsilon)^k{}_{ji})\frac{\partial}{\partial x^k}, E^{j^\prime} X^{i^\prime} (\Gamma^{k^\prime}{}_{j^\prime i^\prime} - (\Gamma_\varepsilon)^{k^\prime}{}_{j^{\prime}i^\prime})\frac{\partial}{\partial x^{k^\prime}}\right)} & \\
& = \sqrt{ g_{kk^\prime (x,y)} E^j X^i ((\Gamma^k{}_{ji} - (\Gamma_\varepsilon)^k{}_{ji}) E^{j^\prime} X^{i^\prime} (\Gamma^{k^\prime}{}_{j^\prime i^\prime} - (\Gamma_\varepsilon)^{k^\prime}{}_{j^{\prime}i^\prime})} \rightarrow 0&
\end{eqnarray*}
uniformly on $K_{TU \setminus 0}$ as $\varepsilon \rightarrow 0$.
Therefore $(\nabla_\varepsilon)_X E$ converges uniformly to $\nabla_X E$ on $K_{TU \setminus 0}$ due to Lemma \ref{several uniform convergences}.
This proves the theorem for the Chern connection.

The proof for the other connections are similar due to Definition \ref{Cartan Hasiguchi Berwarld} and Lemma \ref{several uniform convergences}.
\end{proof}

Now we will study the uniform convergence of flag curvatures of $(M,F_\varepsilon)$ to $(M,F)$.
Denote the set of flags on $M$ by $\mathcal F$ and the set of flags on the tangent space $T_xM$ by $\mathcal F_x$.
The flag curvature of $(M,F)$ and $(M,F_\varepsilon)$ will be denoted by $K$ and $K_\varepsilon$ respectively.

\begin{teo}
\label{curvatura flag converge uniforme}
Let $(M,F)$ be a Finsler manifold and let $F_\varepsilon$ be a mollifier smoothing of $F$.
Let $K_M$ be a compact subset of $M$ and $\delta >0$.
Then there exist $\beta > 0$ such that
\begin{equation}
\label{equacao convergencia uniforme curvatura flag}
\sup_{x \in K_M} \sup_{(y,z) \in \mathcal F_x} \vert K(x,y,z) - K_\varepsilon(x,y,z) \vert < \delta
\end{equation}
for every $\varepsilon \in (0, \beta)$.
\end{teo}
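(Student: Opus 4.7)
The plan is to exploit the scaling invariance of the flag curvature in order to replace the supremum in (\ref{equacao convergencia uniforme curvatura flag}) by a supremum over a compact parameter space, and then to apply Lemma \ref{several uniform convergences} together with Theorem \ref{propriedades de derivada da F epsilon}. Recall that both $K(x, y, z)$ and $K_\varepsilon(x, y, z)$ depend only on $x$, on the direction of the flagpole $y$, and on the plane $\spann\{y, z\}$. First I would cover $K_M$ by a finite number of coordinate open subsets $U_1, \ldots, U_m$ as in (\ref{define U lambda}). On each $TU_k$ use the Euclidean norm $\|\cdot\|$ and inner product $\langle\cdot,\cdot\rangle$ induced by the natural coordinates, and set
\[
\Sigma_k := \{(x, y, z) : x \in K_M \cap \overline{U_k},\ y, z \in T_xU_k,\ \|y\| = \|z\| = 1,\ \langle y, z\rangle = 0\},
\]
which is compact. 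Any flag $(y_0, z_0) \in \mathcal F_x$ with $x \in K_M \cap U_k$ is represented by a point of $\Sigma_k$, namely $y := y_0/\|y_0\|$ and $z := (z_0 - \langle z_0, y\rangle y)/\|z_0 - \langle z_0, y\rangle y\|$, and $K(x, y_0, z_0) = K(x, y, z)$ with the analogous identity for $K_\varepsilon$. It therefore suffices to prove uniform convergence $K_\varepsilon \to K$ on each $\Sigma_k$.

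Next I would write $K = N/D$ and $K_\varepsilon = N_\varepsilon / D_\varepsilon$, where
\[
N(x, y, z) = z^i y^j R_{jikl}(x, y) y^l z^k, \quad D(x, y, z) = (g_{ij} y^i y^j)(g_{kl} z^k z^l) - (g_{pq} y^p z^q)^2,
\]
with $g_{ij} = g_{ij}(x, y)$, and analogously for $N_\varepsilon, D_\varepsilon$. The projection of $\Sigma_k$ onto $TU_k$ is a compact subset of $TU_k \setminus 0$ since $\|y\| = 1$, so Lemma \ref{several uniform convergences} and Theorem \ref{propriedades de derivada da F epsilon} yield $(R_\varepsilon)_{jikl} \to R_{jikl}$ and $(g_\varepsilon)_{ij} \to g_{ij}$ uniformly on this projection. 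Because $N, N_\varepsilon, D, D_\varepsilon$ are polynomial in those coefficients and in $y, z$, and $(y, z)$ ranges in a compact set, I obtain $N_\varepsilon \to N$ and $D_\varepsilon \to D$ uniformly on $\Sigma_k$.

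The main technical point, and the reason for the normalization, is to produce a uniform positive lower bound for the denominators. On $\Sigma_k$ the vectors $y$ and $z$ are Euclidean-orthonormal, hence linearly independent, so Cauchy--Schwarz applied to the positive-definite inner product $g_{ij}(x, y)$ gives $D(x, y, z) > 0$ throughout $\Sigma_k$. Compactness of $\Sigma_k$ and continuity of $D$ then produce $c_k > 0$ with $D \ge 2 c_k$ on $\Sigma_k$; the uniform convergence $D_\varepsilon \to D$ forces $D_\varepsilon \ge c_k$ on $\Sigma_k$ for all sufficiently small $\varepsilon$. The elementary estimate
\[
\left| \frac{N}{D} - \frac{N_\varepsilon}{D_\varepsilon} \right| \le \frac{|N - N_\varepsilon|}{c_k} + \frac{\sup_{\Sigma_k} |N_\varepsilon|}{2 c_k^2} \, |D - D_\varepsilon|
\]
then finishes the argument on each $\Sigma_k$, and taking the smallest of the finitely many thresholds $\beta$ obtained from $U_1, \ldots, U_m$ yields the desired $\beta$ in (\ref{equacao convergencia uniforme curvatura flag}).
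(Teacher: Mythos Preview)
Your proposal is correct and follows essentially the same approach as the paper's own proof: reduce to finitely many coordinate charts, normalize $(y,z)$ to lie in a compact Euclidean-orthonormal set, and then invoke Lemma~\ref{several uniform convergences} and Theorem~\ref{propriedades de derivada da F epsilon} for the uniform convergence of $R_{jikl}$ and $g_{ij}$. Your treatment is in fact slightly more explicit than the paper's, since you spell out the uniform positive lower bound on the denominator $D$ via Cauchy--Schwarz and compactness, whereas the paper leaves this standard step implicit.
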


\begin{proof}
Notice that it is enough to prove (\ref{equacao convergencia uniforme curvatura flag}) for compact subsets $K_U$ of a coordinate open subset $U \subset M$, because a finite number of them can cover a general compact subset $K_M \subset M$. 

Consider $\pi^\ast TU$ with the coordinate system $(x^1, \ldots, x^n,$ $y^1, \ldots, y^n, z^1,$ $\ldots, z^n)$,
where $(x^1, \ldots,$ $x^n,$ $y^1, \ldots, y^n)$ are the natural coordinates on $TU\backslash 0$ and the elements of the fibers of $\pi^\ast TU$ are given by $z^i \frac{\partial }{\partial x^i}$.
We endow each fiber of $\pi^\ast TU$ with the Euclidean inner product with respect to the basis $(\partial / \partial x^1, \ldots, \partial / \partial x^n)$.
Every flag $(x,(y,z))$, with $x \in K_U$ and $(y,z) \in \mathcal F_x$, can be identified with two points in 
\[
K_{\pi^\ast TU} =\left\{ (x, y, z); \sum_{i=1}^n (y^i)^2 = \sum_{i=1}^n (z^i)^2 = 1, \sum_{i=1}^n y^i z^i = 0\right\} \subset \pi^\ast TU.
\]
Reciprocally, every pair of points $(x,y,$ $\pm z)$ is identified with the flag $(y,\pm z)$ on $T_xU$. 
It is straightforward that $K_{\pi^\ast TU}$ is compact.
Therefore
\begin{equation}
\sup_{x \in K_M} \sup_{(y,z) \in \mathcal F_x} \vert K(x,y,z) - K_\varepsilon(x,y,z) \vert = \sup_{(x,y,z) \in K_{\pi^\ast TU}}\vert K(x,y,z) - K_\varepsilon(x,y,z) \vert \label{sup converge uniforme}
\end{equation}
where
\begin{equation}
K(x,y,z) = \frac{z^i(y^jR_{jikl}y^l)z^k}{g_{\tilde \imath \tilde \jmath}  g_{\tilde k \tilde l}y^{\tilde \imath} y^{\tilde \jmath} z^{\tilde k} z^{\tilde l}-[g_{\hat \imath \hat \jmath}y^{\hat \imath} z^{\hat \jmath}]^2} \nonumber
\end{equation}
and
\begin{equation}
K_\varepsilon (x,y,z) = \frac{z^i(y^j(R_\varepsilon)_{jikl}y^l)z^k}{(g_\varepsilon)_{\tilde \imath \tilde \jmath}  (g_\varepsilon)_{\tilde k \tilde l}y^{\tilde \imath} y^{\tilde \jmath} z^{\tilde k} z^{\tilde l}-[(g_\varepsilon)_{\hat \imath \hat \jmath}y^{\hat \imath} z^{\hat \jmath}]^2}. \nonumber
\end{equation}
Therefore (\ref{sup converge uniforme}) converges uniformly to zero on $K_{\pi^\ast TU}$ as $\varepsilon$ goes to zero because the uniform convergences
\[
(g_\varepsilon)_{ij} \rightarrow g_{ij}
\]
and
\[
(R_\varepsilon)_{jikl} \rightarrow R_{jikl}
\]
hold on $\pi (K_{\pi^\ast TU})$ due to Theorem \ref{propriedades de derivada da F epsilon} and  Lemma \ref{several uniform convergences}.
\end{proof}
 
\section{Examples}
\label{secao exemplos}

The aim of this section is to present examples of piecewise Finsler manifolds and show how calculations can be made in this case. 
Our main example is piecewise Riemannian for the sake of simplicity, but similar calculations can be made for the piecewise Finsler case. 
We comment about the necessary adaptations from the Riemannian case to the Finsler case in Remark \ref{o exemplo Finsler}. 
In this example we use only the horizontal smoothing $F_\varepsilon$ of $F$ because the latter is already vertically smooth.

Let $(M,F)$ be a $C^0$-Finsler manifold such that $F(x,\cdot):T_xM \rightarrow \mathbb R$ is a Minikowski norm for every $x\in M$.
If we consider the mollifier smoothing $F_\varepsilon$ of $F$ given by (\ref{F check lambda}) and (\ref{F epsilon definida em TM}), with $G_{\psi_\lambda(\varepsilon),\lambda}$ replaced by $F\vert_{TU_\lambda}$, then Lemma \ref{propriedades da aplicacao f epsilon} and Theorem \ref{propriedades da aplicacao F epsilon definida em TM} hold.
Moreover if we suppose that $F$ is a Finsler structure, then Theorem \ref{propriedades de derivada da F epsilon}, Lemma \ref{several uniform convergences}, Theorem \ref{convergencia uniforme Chern} and Theorem  \ref{curvatura flag converge uniforme} hold as well.
The proof of these results are very similar to the original ones and will be omitted here.

Let $M = \mathbb R^n$, $\bar M_+ = \overline{\mathbb R^n_+} = \{(x^1, \ldots, x^n) \in \mathbb R^n;x^n \geq 0\}$ and $\bar M_- = \overline{\mathbb R^n_-} = \{(x^1, \ldots, x^n) \in \mathbb R^n;x^n \leq 0\}$.
Let $g$ be a piecewise smooth Riemannian metric on $M$ such that $g_+ := g\vert_{\bar M_+}$ and $g_- = g\vert_{\bar M_-}$ admit smooth extensions in a neighborhood of $\bar M_+$ and $\bar M_-$ respectively.
$TM$ will be endowed with its canonical coordinate system $(x^1, \ldots, x^n, y^1, \ldots, y^n)$ and the Riemannian metric can be considered as the Finsler structure
\begin{equation}
\label{piecewise smooth Riemannian}
F(x^1, \ldots, x^n, y^1, \ldots, y^n) = \sqrt{g_{ij}(x^1, \ldots, x^n) y^i y^j}.
\end{equation}
The Riemannian metric is the fundamental tensor of $F$ and it doesn't depend on $y$.

The locally finite differentiable structure $\{(U_\lambda,(x^i)_\lambda)\}_{\lambda\in\Lambda}$ is given by the unique element $(\mathbb R^2, \mathrm{id})$.
The partition of the unity is given by $\{ \varphi \equiv 1\}$ and we define
\[
F^2_\varepsilon (x,y) = \int_M \eta_\varepsilon (x-z)F^2(z,y) dz. 
\]
Observe that the fundamental tensor $g_\varepsilon$ of $F_\varepsilon$ has its components given by
\begin{equation}
(g_\varepsilon)_{ij}(x)= \int_M \eta_\varepsilon (x-z)g_{ij}(z) dz. \label{termo nao pico gij}
\end{equation}
{\em Through this section, with the exception of Remark \ref{o exemplo Finsler}, $(M,g)$ stands for this example and $(M,g_\varepsilon)$ is given by (\ref{termo nao pico gij}).}
All the theory presented in this work hold for this example because we can restrict the calculations on open subsets of $M$ or $TM\setminus 0$ with compact closure.
As in the proof of Lemma \ref{lema propriedade de F_epsilon}, denote $x=(x^\prime, x^n)$, where $x^\prime = (x^1, \ldots,$  $x^{n-1})$.
The next proposition gives the main formulas in order to study the geometry of $(M,g_\varepsilon)$ in a neighborhood of $x^n=0$. 

\begin{pro}
\label{lema fundamental exemplo} 
For the piecewise smooth Riemannian manifold $(M, g)$ defined in (\ref{piecewise smooth Riemannian}), the following formulas hold for every $i,j \in \{1, \ldots, n\}$:
\begin{eqnarray}
\frac{\partial (g_\varepsilon)_{ij}}{\partial x^k}(x)
& = & 
\left( \eta_\varepsilon \ast \frac{\partial g_{ij}}{\partial x^k} \right) (x) \nonumber \\
& = & \int_{\bar M_-} \eta_\varepsilon (x - z) \frac{\partial g_{ij-}}{\partial x^k}(z)dz
+ \int_{\bar M_+} \eta_\varepsilon (x - z) \frac{\partial g_{ij+}}{\partial x^k}(z)dz \label{termo nao pico k}
\end{eqnarray}
for every $k\in \{1, \ldots, n \}$,
\begin{eqnarray}
\frac{\partial^2 (g_\varepsilon)_{ij}}{\partial x^l \partial x^k}(x)
& = & \left(\eta_\varepsilon \ast \frac{\partial^2 g_{ij}}{\partial x^l \partial x^k}\right) (x) \nonumber \\
& = & \int_{\bar M_-} \eta_\varepsilon (x - z) \frac{\partial^2 g_{ij-}}{\partial x^l \partial x^k}(z)dz
+ \int_{\bar M_+} \eta_\varepsilon (x - z) \frac{\partial g_{ij+}}{\partial x^l \partial x^k}(z)dz \label{termo nao pico lk}
\end{eqnarray}
if $(i,j) \neq (n,n)$ and 
\begin{eqnarray}
\frac{\partial^2 (g_\varepsilon)_{ij}}{\partial (x^n)^2} (x^\prime, x^n) & = & \left(\eta_\varepsilon \ast \frac{\partial^2 g_{ij}}{\partial (x^n)^2}\right) (x^\prime, x^n) \nonumber \\ 
& & + \int_{\{x^n = 0\}} \eta_\varepsilon (x^\prime - z^\prime,x^n) \left( \frac{\partial g_{ij+}}{\partial x^n}(z^\prime, 0) - \frac{\partial g_{ij-}}{\partial x^n}(z^\prime, 0) \right) dz^\prime \nonumber \\
& = & \int_{\bar M_-} \eta_\varepsilon (x - z) \frac{\partial^2 g_{ij-}}{\partial (x^n)^2}(z)dz + \int_{\bar M_+} \eta_\varepsilon (x - z) \frac{\partial^2 g_{ij+}}{\partial (x^n)^2}(z)dz  \nonumber \\
& & + \int_{\{x^n = 0\}} \eta_\varepsilon (x^\prime - z^\prime,x^n) \left( \frac{\partial g_{ij+}}{\partial x^n}(z^\prime, 0) - \frac{\partial g_{ij-}}{\partial x^n}(z^\prime, 0) \right) dz^\prime. \label{termo pico}
\end{eqnarray}
\end{pro}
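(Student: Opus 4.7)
All three formulas are proved by iterated integration by parts, using two ingredients: the compact support of $\eta_\varepsilon$ (which makes differentiation under the integral routine), and the hypothesis that $g$ is continuous on $M$ while $g_{ij\pm}$ extends smoothly past the interface $\{x^n=0\}$.

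For the single-derivative formula (\ref{termo nao pico k}), I would rewrite $\partial_{x^k}\eta_\varepsilon(x-z)=-\partial_{z^k}\eta_\varepsilon(x-z)$, split $M=\bar M_+\cup\bar M_-$, and integrate by parts in $z^k$ on each half-space (legitimate since $g_{ij\pm}$ has a smooth extension past $\{z^n=0\}$). If $k\neq n$ the direction $z^k$ is tangent to the interface and no boundary contribution arises. If $k=n$, the boundary contributions at $\{z^n=0\}$ from $\bar M_+$ and $\bar M_-$ appear with opposite signs (opposite outward normals) and common integrand $\eta_\varepsilon(x^\prime-z^\prime,x^n)\,g_{ij\pm}(z^\prime,0)$; they cancel by the continuity of $g$, yielding (\ref{termo nao pico k}).

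The two second-derivative formulas follow by iterating. Differentiating (\ref{termo nao pico k}) once more in $x^l$ and integrating by parts again on each half-space, the only possible obstruction to a clean split-integral answer is a boundary contribution at $\{z^n=0\}$ involving $\partial g_{ij\pm}/\partial z^k(z^\prime,0)$. When $\partial g_{ij}/\partial z^k$ is continuous across the interface — automatic if $k\neq n$, since then it is a tangential derivative of the continuous function $g_{ij}$, and in the remaining case exactly the content of the stated non-degeneracy hypothesis on $(i,j)$ — the two boundary integrals again cancel and (\ref{termo nao pico lk}) follows. On the other hand, for $l=k=n$ with no such continuity of $\partial g_{ij}/\partial z^n$ available, the boundary integrals combine instead of cancelling into the jump term $\int\eta_\varepsilon(x^\prime-z^\prime,x^n)\bigl(\partial g_{ij+}/\partial z^n-\partial g_{ij-}/\partial z^n\bigr)(z^\prime,0)\,dz^\prime$, added to the bulk split convolution with $\partial^2 g_{ij\pm}/\partial(z^n)^2$; this gives (\ref{termo pico}).

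The argument is essentially mechanical, so there is no deep obstacle; the only point requiring attention is the sign bookkeeping for the boundary terms, dictated by the opposite outward-normal conventions on $\bar M_+$ and $\bar M_-$, together with the minus sign produced each time one converts $\partial_{x^i}$ acting on $\eta_\varepsilon(x-z)$ into $-\partial_{z^i}$ before integrating by parts.
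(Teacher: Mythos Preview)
Your approach is the same as the paper's: differentiate $\eta_\varepsilon(x-z)$, convert $\partial_{x^k}$ to $-\partial_{z^k}$, split the integral over $\bar M_\pm$, integrate by parts, and track the interface terms. The paper carries out the $k=n$ case of (\ref{termo nao pico k}) explicitly and then says the remaining formulas ``follow likewise'', so your iteration is exactly what is intended. One minor stylistic difference: for $k\neq n$ the paper does not integrate by parts at all but simply differentiates under the integral using that $\partial g_{ij}/\partial x^k$ is already continuous on all of $M$; your uniform IBP treatment is of course equivalent.

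There is one point where you are tripped up by what is almost certainly a typo in the statement. The hypothesis ``$(i,j)\neq(n,n)$'' attached to (\ref{termo nao pico lk}) should read ``$(l,k)\neq(n,n)$'': the issue is whether one differentiates twice in the normal direction, not which metric component is involved, and indeed the paper's own application immediately afterwards uses (\ref{termo pico}) for $\partial^2(g_\varepsilon)_{11}/\partial(x^2)^2$ with $(i,j)=(1,1)$. Your sentence ``in the remaining case exactly the content of the stated non-degeneracy hypothesis on $(i,j)$'' attempts to make the literal condition do work it cannot do --- nothing in the setup forces $\partial g_{ij}/\partial x^n$ to be continuous across the interface when $(i,j)\neq(n,n)$. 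But your actual argument already proves the correct statement: you have shown that the boundary contribution vanishes whenever $l\neq n$ (no interface term arises) or $k\neq n$ (the interface term involves a tangential derivative of $g_{ij}$, hence is continuous), i.e.\ whenever $(l,k)\neq(n,n)$, and that it survives as the jump term precisely when $l=k=n$. So the proof is fine once the condition is read as intended.
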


\begin{proof}
For $k \in \{1, \ldots, n-1\}$, we have that
\[
\frac{\partial (g_\varepsilon)_{ij}}{\partial x^k}(x) = \frac{\partial}{\partial x^k}\int_{\mathbb R^n} \eta_\varepsilon (z) g_{ij}(x - z)dz
= \int_{\mathbb R^n} \eta_\varepsilon (z) \frac{\partial g_{ij}}{\partial x^k}(x - z) dz = \left( \eta_\varepsilon \ast \frac{\partial g_{ij}}{\partial x^k} \right) (x)
\]
because $\frac{\partial g_{ij}}{\partial x^k} $ is continuous.
For $k=n$, we have that
\begin{eqnarray}
\frac{\partial (g_\varepsilon)_{ij}}{\partial x^n}(x^\prime, x^n) & = & \frac{\partial}{\partial x^n}\int_{\mathbb R^n} \eta_\varepsilon (x^\prime - z^\prime, x^n - z^n) g_{ij}(z^\prime, z^n) dz \nonumber \\
& = & \int_{\mathbb R^n} \frac{\partial \eta_\varepsilon}{ \partial x^n} (x^\prime - z^\prime, x^n - z^n) g_{ij}(z^\prime, z^n)dz \nonumber \\
& = & -\int_{\mathbb R^{n-1}} \int_{\mathbb R} \frac{\partial \eta_\varepsilon}{ \partial z^n} (x^\prime - z^\prime, x^n - z^n) g_{ij}(z^\prime, z^n)dz^n dz^\prime \nonumber \\
& = & -\int_{\mathbb R^{n-1}} \int_{(-\infty, 0]} \frac{\partial \eta_\varepsilon}{ \partial z^n} (x^\prime - z^\prime, x^n - z^n) g_{ij-}(z^\prime, z^n)dz^n dz^\prime \nonumber \\
& & - \int_{\mathbb R^{n-1}} \int_{[0,\infty)} \frac{\partial \eta_\varepsilon}{ \partial z^n} (x^\prime - z^\prime, x^n - z^n) g_{ij+}(z^\prime, z^n)dz^n dz^\prime. \nonumber 
\end{eqnarray}
Integrating by parts, we get
\begin{eqnarray}
\frac{\partial (g_\varepsilon)_{ij}}{\partial x^n}(x^\prime, x^n) 
& = & \int_{\mathbb R^{n-1}} \int_{(-\infty, 0]} \eta_\varepsilon (x^\prime - z^\prime, x^n - z^n) \frac{\partial g_{ij-}}{\partial z^n}(z^\prime, z^n)dz^n dz^\prime \nonumber \\
& & - \int_{\mathbb R^{n-1}} \eta_\varepsilon (x^\prime - z^\prime, x^n) g_{ij-} (z^\prime, 0) dz^\prime \nonumber \\
& & + \int_{\mathbb R^{n-1}} \int_{[0,\infty)} \eta_\varepsilon (x^\prime - z^\prime, x^n - z^n) \frac{\partial g_{ij+}}{\partial z^n}(z^\prime, z^n) dz^n dz^\prime \nonumber \\
& & + \int_{\mathbb R^{n-1}} \eta_\varepsilon (x^\prime - z^\prime, x^n) g_{ij+} (z^\prime, 0) dz^\prime \nonumber \\
& = & \int_{\mathbb R^n} \eta_\varepsilon (x^\prime - z^\prime, x^n - z^n) \frac{\partial g_{ij}}{\partial z^n}(z^\prime, z^n)dz^n dz^\prime \nonumber \\
& & + \int_{\mathbb R^{n-1}} \eta_\varepsilon (x^\prime - z^\prime, x^n) (g_{ij+} (z^\prime, 0) - g_{ij-} (z^\prime, 0)) dz^\prime \nonumber \\
& = & \left(\eta_\varepsilon \ast \frac{\partial g_{ij}}{\partial x^n}\right) (x), \nonumber
\end{eqnarray}
because $g_{ij+} = g_{ij-}$ on $x^n=0$.
Here it is worth to emphasize that $\frac{\partial g_{ij}}{\partial x^n}$ isn't necessarily continuous on $x^n = 0$.

The other cases follows likewise, splitting the domain where the integrand is continuous, taking the derivative of $\eta_\varepsilon$ inside the integral, changing the derivative of $\eta_\varepsilon$ from variable ``$x$'' to ``$z$'', using Fubini's theorem and using integration by parts.
\end{proof}

We are interested to study the behavior of the sectional curvatures of $(M,g_\varepsilon)$ on $x^n=0$ when $\varepsilon$ goes to zero. 
In order to simplify the analysis, we suppose that $n=2$.
Remarks for more general cases will be made in Remark \ref{o exemplo Finsler}. 
The sectional curvature of $(M,g_\varepsilon)$ is given by
\begin{equation}
K_\varepsilon 
= \frac{(R_\varepsilon)_{1221}}{(g_\varepsilon)_{11} (g_\varepsilon)_{22} - (g_\varepsilon)_{12}^2} 
= \frac{(g_\varepsilon)_{2i}(R_\varepsilon)_1{}^i{}_{21}}{(g_\varepsilon)_{11} (g_\varepsilon)_{22} - (g_\varepsilon)_{12}^2}\label{curvatura secional varepsilon exemplo}
\end{equation}
where
\[
(R_\varepsilon)_1{}^i{}_{21} 
= \frac{\partial (\Gamma_\varepsilon)^i{}_{11}}{\partial x^2} - \frac{\partial (\Gamma_\varepsilon)^i{}_{12}}{\partial x^1} + (\Gamma_\varepsilon)^i{}_{j2}(\Gamma_\varepsilon)^j{}_{11} - (\Gamma_\varepsilon)^i{}_{j1}(\Gamma_\varepsilon)^j{}_{12}
\]
and
\[
(\Gamma_\varepsilon)^i{}_{jl} = \frac{(g_\varepsilon)^{im}}{2}\left(\frac{\partial (g_\varepsilon)_{mj}}{\partial x^l} - \frac{\partial (g_\varepsilon)_{jl}}{\partial x^m} + \frac{\partial (g_\varepsilon)_{ml}}{\partial x^j}\right).
\]
Therefore $K_\varepsilon$ is given in terms of:
\begin{enumerate}
\item $(g_\varepsilon)_{ij}$ and their derivatives of order up to two;
\item $(g_\varepsilon)^{ij}$ and their derivatives of order up to one.
\end{enumerate}

Let $U$ be a subset of $M$ with compact closure. Then 
\[
\max\left\{ \sup_{\substack{\varepsilon \in (0,1) \\ x \in U}} \left\vert (g_\varepsilon)_{ij}(x) \right\vert, \sup_{\substack{\varepsilon \in (0,1) \\ x \in U \\ k\in \{1, 2\}}} \left\vert  \frac{\partial (g_\varepsilon)_{ij}}{\partial x^k}(x) \right\vert, \sup_{\substack{\varepsilon \in (0,1) \\ x \in U \\ (l,k)\neq (2, 2)}} \left\vert \frac{\partial^2 (g_\varepsilon)_{ij}}{\partial x^l \partial x^k} (x)\right\vert\right\} < \infty
\]
for every $i,j\in \{1, 2\}$ due to (\ref{termo nao pico gij}), (\ref{termo nao pico k}), (\ref{termo nao pico lk}) and the smoothness of these terms in $\bar M_+$ and $\bar M_-$. 
In addition, the uniform convergence $\lim\limits_{\varepsilon \rightarrow 0} (g_\varepsilon)_{ij}(x)$ $ = g_{ij}(x)$ on $U$ implies that $(g_\varepsilon)^{ij}$ converges uniformly to $g^{ij}$ on $U$ when $\varepsilon$ converges to zero. 
Therefore there exist $\varepsilon^\prime >0$ such that
\[
\sup_{\substack{\varepsilon \in (0,\varepsilon^\prime) \\ x \in U}} \left\vert (g_\varepsilon)^{ij}(x) \right\vert < \infty
\]
for every $i,j\in \{1,2\}$.

Finally we have that
\[
\frac{\partial }{\partial x^k}(g_\varepsilon)^{ij} 
= - (g_\varepsilon)^{il} \left( \frac{\partial}{\partial x^k} (g_\varepsilon)_{lm}\right) (g_\varepsilon)^{mj}
\]
as a consequence of
\[
0 = \frac{\partial}{\partial x^k}\left( (g_\varepsilon)^{il}(g_\varepsilon)_{lm} \right),
\]
and it follows that
\[
\sup_{\substack{\varepsilon \in (0,\varepsilon^\prime) \\ x \in U}} \left\vert \frac{\partial}{\partial x^k}(g_\varepsilon)^{ij}(x) \right\vert < \infty
\]
for every $i, j, k \in \{1,2\}$.

The only term of $(R_\varepsilon)^{\ i}_{1\ 21}$ which eventually isn't bounded on $U$ when $\varepsilon$ goes to zero is (\ref{termo pico}) of $\frac{\partial^2 (g_\varepsilon)_{ij}}{\partial (x^n)^2} (x)$.
This term can go to $\pm \infty$ when $\varepsilon$ goes to zero.
In fact
\begin{eqnarray}
& & \lim_{\varepsilon \rightarrow 0} \int_{\{x^2 = 0\}} \eta_\varepsilon (x^\prime - z^\prime, 0) \left( \frac{\partial g_{ij+}}{\partial z^n}(z^\prime, 0) - \frac{\partial g_{ij-}}{\partial z^n}(z^\prime, 0) \right) dz^\prime \nonumber \\
& = & \lim_{\varepsilon \rightarrow 0} \int_{\{x^2 = 0\}} \frac{\eta \left(\frac{x^\prime - z^\prime, 0}{\varepsilon}\right)}{\varepsilon^n} \left( \frac{\partial g_{ij+}}{\partial z^n}(z^\prime, 0) - \frac{\partial g_{ij-}}{\partial z^n}(z^\prime, 0) \right) dz^\prime. \label{desenvolve termo pico}
\end{eqnarray}
If we denote
\[
c = \int_{\{x^2 = 0\}} \frac{\eta \left( \frac{x^\prime - z^\prime, 0}{\varepsilon} \right)}{\varepsilon^{n-1}}dz^\prime,
\]
this integral doesn't depend on $\varepsilon$ and
\begin{eqnarray}
& & \lim_{\varepsilon \rightarrow 0} \int_{\{x^2 = 0\}} \frac{\eta \left(\frac{x^\prime - z^\prime, 0}{\varepsilon}\right)}{\varepsilon^{n-1}} \left( \frac{\partial g_{ij+}}{\partial z^n}(z^\prime, 0) - \frac{\partial g_{ij-}}{\partial z^n}(z^\prime, 0) \right) dz^\prime \nonumber \\
& = & c \left( \frac{\partial g_{ij+}}{\partial x^n}(x^\prime, 0) - \frac{\partial g_{ij-}}{\partial x^n}(x^\prime, 0) \right). \nonumber
\end{eqnarray}
Therefore (\ref{desenvolve termo pico}) is $\infty$ or $-\infty$ depending if
\[
\left( \frac{\partial g_{ij+}}{\partial x^n}(x^\prime, 0) - \frac{\partial g_{ij-}}{\partial x^n}(x^\prime, 0) \right)
\]
is strictly positive or strictly negative respectively.

Let us study the influence of (\ref{termo pico}) on the total curvature of $M$. 
Direct calculations show that this term appear as
\[
- \frac{1}{2\det g_\varepsilon}\frac{\partial^2 (g_\varepsilon)_{11}}{\partial (x^2)^2} 
\]
in (\ref{curvatura secional varepsilon exemplo}).

Denote
\begin{equation}
\label{gap g112}
q(z^1) := \frac{\partial (g_\varepsilon)_{11+}}{\partial z^2}(z^1, 0) - \frac{\partial (g_\varepsilon)_{11-}}{\partial z^2}(z^1, 0). \end{equation}
Fix a line segment $I = [a,b] \times \{0\}$ in $\{x^2 = 0\}$.
The influence of the gap (\ref{gap g112}) for $(z^1,0)$ varying along $I$ on the total curvature of $(M,g)$ is given by 
\begin{eqnarray}
& & -\int_{I} \left( \int_M \frac{\eta_\varepsilon (x^1 - z^1, x^2)}{2\det g_\varepsilon} q(z^1) \sqrt{\det g_\varepsilon} .dx^1 . dx^2 \right) dz^1 \nonumber \\
& = & -\int_{I} \left( \int_M \frac{\eta_\varepsilon (x^1 - z^1, x^2)}{2\sqrt{(g_\varepsilon)_{11}.\det g_\varepsilon}} q(z^1) .dx^1 . dx^2 \right) ds, \label{curvatura integra do pico}
\end{eqnarray}
where $\sqrt{\det g_\varepsilon} dx^1 dx^2$ is the volume element of $(M,g_\varepsilon)$ and $ds = \sqrt{(g_\varepsilon)_{11}}.dz^1$ is the arclength element of $I \subset (M,g_\varepsilon)$. 
When $\varepsilon \rightarrow 0$, then the limit of (\ref{curvatura integra do pico}) is given by
\begin{equation}
\label{contribuicao gap curvatura total}
-\int_I \frac{q(s)}{2 \sqrt{g_{11}(s) \det g (s)}}.ds. \end{equation}

On the other hand it is straightforward that the geodesic curvature of $I\subset \bar M_+$ with respect to the unit normal vector field 
\[
N = -\frac{g_{12}}{\sqrt{g_{11}}\sqrt{\det g}}\frac{\partial}{\partial x^1} + \frac{ g_{11}}{\sqrt{g_{11}}\sqrt{\det g}} \frac{\partial}{\partial x^2}
\]
pointed towards $\bar M_+$ is given by
\begin{eqnarray}
k_{g+}& = & g\left( \nabla_{ \left( \frac{1}{\sqrt{g_{11}}}\frac{\partial}{\partial x^1} \right) } \left( \frac{1}{\sqrt{g_{11}}}\frac{\partial}{\partial x^1} \right), N \right) \nonumber \\
& = & \frac{\sqrt{\det g}}{2(g_{11})^{3/2}}\left(g^{21} \frac{\partial g_{11+}}{\partial x^1} + 2.g^{22}\frac{\partial g_{12+}}{\partial x^1} - g^{22} \frac{\partial g_{11+}}{\partial x^2} \right). \label{kgmais}
\end{eqnarray}
Analogously the geodesic curvature of $I \subset \bar M_-$ with respect to the unit normal vector field $-N$ pointed towards $\bar M_-$ is given by
\begin{equation}
\label{kgmenos} 
k_{g-} = - \frac{\sqrt{\det g}}{2(g_{11})^{3/2}}\left(g^{21} \frac{\partial g_{11-}}{\partial x^1} + 2.g^{22}\frac{\partial g_{12-}}{\partial x^1} - g^{22} \frac{\partial g_{11-}}{\partial x^2} \right). 
\end{equation}
But
\[
\frac{\partial g_{ij+}}{\partial x^1} = \frac{\partial g_{ij-}}{\partial x^1}
\]
for every $i,j\in \{1,2\}$, what implies that
\begin{eqnarray}
k_{g+} + k_{g-} & = & - \frac{g^{22}\sqrt{\det g}}{2(g_{11})^{3/2}}\left(\frac{\partial g_{11+}}{\partial x^2} - \frac{\partial g_{11-}}{\partial x^2} \right) \nonumber \\
& = & -\frac{1}{2\sqrt{g_{11}\det g}}\left(\frac{\partial g_{11+}}{\partial x^2} - \frac{\partial g_{11-}}{\partial x^2} \right). \label{soma curvatura geodesica}
\end{eqnarray}
Therefore the contribution of the gap (\ref{gap g112}) along $I$ on the total curvature of $(M,g)$ is given by 
\[
\int_I \left( k_{g+} + k_{g-} \right) ds
\]
due to (\ref{contribuicao gap curvatura total}), (\ref{kgmais}), (\ref{kgmenos}) and (\ref{soma curvatura geodesica}).
In summary, piecewise two-dimensional Riemannian manifolds admits nonzero total curvature on subsets of measure zero.

\begin{obs}
\label{o exemplo Finsler}
In this remark we outline how the analysis made for the bidimensional case can be extended for more general cases. 
Let $(M=\mathbb R^n,F)$ be a piecewise smooth Finsler manifold such that $F\vert_{T\bar M_+ \backslash 0}$ and $F\vert_{T\bar M_- \backslash 0}$ are smoothly extendable to the slit tangent bundle of a neighborhood of $\bar M_+$ and $\bar M_-$ respectively.
We consider the canonical coordinates $(x^1, \ldots, x^n)$ on $M$, $(x^1, \ldots, x^n, y^1, \ldots, y^n)$ on $TM$ and $(x^1, \ldots, x^n, y^1, \ldots, y^n,$ $z^1, \ldots, z^n)$ on $\pi^\ast TM$ as in the proof of Theorem \ref{curvatura flag converge uniforme}.
For the horizontal smoothing $(M, F_\varepsilon)$, the flag curvature of the flag $(y,z)$ on $T_xM$ is given by 
\[
K_\varepsilon (y,z):=\frac{z^i(y^j (R_\varepsilon)_{jikl}y^l)z^k}{g_\varepsilon(y,y)g_\varepsilon(z,z)-[g_\varepsilon(y,z)]^2}.
\]
For every $(i,j,k,l)$, it follows from (\ref{termo nao pico gij}) that 
\[
\frac{z^i y^j y^l z^k}{g_\varepsilon(y,y)g_\varepsilon(z,z)-[g_\varepsilon(y,z)]^2}
\]
converges uniformly to
\[
\frac{z^i y^j y^l z^k}{g(y,y)g(z,z)-[g(y,z)]^2}
\]
on $U$ when $\varepsilon$ converges to zero.
In particular, there exist $\varepsilon^\prime >0$ such that
\[
\sup_{\substack{\varepsilon \in (0,\varepsilon^\prime) \\ x \in U \\(y,z) \in \mathcal F_x}} \left\vert \frac{z^i y^j y^l z^k}{g_\varepsilon(y,y)g_\varepsilon(z,z)-[g_\varepsilon(y,z)]^2} \right\vert < \infty.
\]
As in the bidimesional example, $(R_\varepsilon)^{\ i}_{j\ kl}$ is given in terms of
\begin{itemize}
\item $(g_\varepsilon)_{ij}$ and their derivatives of order up to two;
\item $(g_\varepsilon)^{ij}$ and their derivatives of order up to one
\end{itemize}
(see (\ref{Rjikl hh}) and (\ref{Gamma almost metric compatibility})).
The flag curvature can be split in terms that are bounded as $\varepsilon$ converges to zero and terms that are unbounded.
The latter are related to the terms of type
\[
\frac{\partial^2 (g_\varepsilon)_{ij}}{\partial (x^n)^2},
\] 
and we can try to study the geometric meaning of these terms like we did in the two-dimensional piecewise smooth Riemannian case.
\end{obs}

\section{Final remarks}
\label{secao obs finais}

In this section, we make final remarks and we suggest some problems.

The technique applied in Section \ref{secao exemplos} can be probably applied in the study of piecewise smooth Finsler structures, where $F$ behaves locally as in the example.
In particular, it will be interesting to study the influence of the Cartan tensor on the unbounded term of the flag curvature.

We can try to go a little bit further and work with piecewise smooth Finsler structures, but allowing that $F$ isn't defined on a subset of measure zero.
We have geometric objects such as piecewise flat Finsler surfaces as defined in \cite{Xu-Deng}. 
In order to formalize this idea, a natural trial is to consider structures $F:TM \rightarrow \mathbb R$ in some local Sobolev spaces such that $F(x, \cdot)$ is an asymmetric norm.
Notice that mollifier smoothings can make sense even if $F$ isn't defined on a subset of measure zero.

Another important issue to be addressed in the future is to study some criteria to prove that the asymptotic behaviour of a geometric object when $\varepsilon$ converges to zero is independent of the choice of the mollifier smoothing.

\end{document}